\documentclass[a4paper,12pt]{article}
\pagestyle{headings}
\usepackage{amsmath,amsthm,amssymb,enumerate}
\usepackage{lipsum}
\usepackage{bm}
\usepackage{cancel}
\usepackage[affil-it]{authblk}
\usepackage{gensymb}
\usepackage[square,sort&compress,comma,numbers]{natbib}
\usepackage{mathptmx} 
\usepackage{hyperref}
\usepackage{subfig}
\usepackage{graphicx}
\usepackage[margin=3cm]{geometry}
\usepackage{tikz}
\usepackage{esint}
\usepackage{caption}
\usetikzlibrary{shapes,calc}
\usepackage{verbatim}
 \usepackage{stmaryrd}
\usepackage{bm}
\usepackage{marginnote}
\chardef\bslash=`\\ 





\hfuzz1pc 


\newtheorem{thm}{Theorem}[section]
\newtheorem{cor}[thm]{Corollary}
\newtheorem{lem}[thm]{Lemma}

\theoremstyle{definition}

\theoremstyle{remark}
\newtheorem{rem}{Remark}[section]
\newtheorem{example}{Example}[section]

\numberwithin{equation}{section}


\newcommand{\bN}{\mathbb N}
\newcommand{\bR}{\mathbb R}

\newcommand{\bT}{\mathbb T}

\newcommand{\bS}{\mathbb S}

\newcommand{\cC}{\mathcal C}
\newcommand{\cD}{\mathcal D}
\newcommand{\cE}{\mathcal E}

\newcommand{\cT}{\mathcal T}
\newcommand{\cL}{\mathcal L}

\newcommand{ \cN}{\mathcal N}

\newcommand{\cM}{M}

\newcommand{\lt}{L^2(\Omega)}

\newcommand{\hto}{H^2_0(\Omega)}

\newcommand{\integ}{\int_\Omega}
\newcommand{\sit}{\sum_{T\in\mathcal{T}}\int_T}
\newcommand{\sik}{\sum_{K\in\mathcal{T}}\int_K}

\newcommand{\divc}{\mathrm{div}}

\newcommand{\sie}{\sum_{E\in \mathcal{E}}\int_E}

\newcommand{\fl}{\quad \text{for all}\:}

\newcommand{\half}{\frac{1}{2}}
\newcommand{\trinl}{\ensuremath{\left| \! \left| \! \left|}}
\newcommand{\trinr}{\ensuremath{\right| \! \right| \! \right|}}

\newcommand{\dx}{{\rm\,dx}}
\newcommand{\ds}{{\rm\,ds}}

\newcommand{\cof}{{\rm cof}}

\newcommand{\Holder}{H\"{o}lder~}
\usepackage[normalem]{ulem}
\normalem
\definecolor{violet}{rgb}{0.580,0.,0.827}
\newcommand{\ccnew}[1]{#1}
\begin{document}

\newpage
\setcounter{page}{1}

\title{Nonconforming Finite Element Discretisation for 
Semilinear Problems with Trilinear Nonlinearity}
\author{Carsten Carstensen\footnote{Department of Mathematics, Humboldt-Universit\"{a}t zu 
Berlin, 10099 Berlin, Germany.  Distinguished Visiting Professor, Department of Mathematics, 
Indian institute of Technology Bombay, Powai, Mumbai-400076. Email cc@math.hu-berlin.de}        
$\boldsymbol{\cdot}$ Gouranga Mallik
\footnote{Department of Mathematics,
Indian Institute of Science, Bangalore 560012
India. Email gourangam@iisc.ac.in}
$\boldsymbol{\cdot}$ Neela Nataraj
\footnote{Department of Mathematics, Indian Institute of Technology Bombay, Powai, Mumbai 400076, India. Email neela@math.iitb.ac.in}
}
\maketitle
\begin{abstract} 
The Morley finite element method (FEM) is attractive for  semilinear problems 
with the biharmonic operator as a leading term in  the stream function vorticity formulation of 2D Navier-Stokes problem and in the von  K\'{a}rm\'{a}n equations. This paper establishes a 
best-approximation a~priori error analysis and an a~posteriori error analysis of discrete
solutions close to an arbitrary  regular solution on the continuous level to semilinear problems with a trilinear nonlinearity.
The analysis avoids any smallness assumptions on the data and so has to 
provide discrete stability by a perturbation analysis before the Newton-Kantorovic 
theorem can provide the existence of discrete solutions. An abstract framework 
 for the stability analysis in terms of discrete operators from the medius analysis 
 leads to 
 new results
 on the nonconforming  Crouzeix-Raviart FEM for 
 second-order linear non-selfadjoint and indefinite elliptic problems 
 with $L^\infty$ coefficients. The  paper  identifies  six parameters 
 and sufficient conditions for the local a~priori and  a~posteriori error control of 
 conforming and nonconforming  discretisations of a class of semilinear 
 elliptic problems first in an abstract framework and then in the
 two semilinear  applications. This leads 
 to new best-approximation error estimates  
 and to 
  a~posteriori error estimates in terms of explicit residual-based 
 error control for the conforming and Morley FEM.
\end{abstract}

{\bf Keywords}: nonconforming, Morley finite element, elliptic, semilinear, 
stream function vorticity formulation, 2D Navier-Stokes equations, 
von K\'{a}rm\'{a}n equations, a~posteriori,
second-order linear non-selfadjoint and indefinite elliptic, Crouzeix-Raviart

\section{Introduction}

\subsection{Motivation}
The nonconforming finite element methods (FEMs) have recently been rehabilitated
by the medius analysis, which combines arguments from traditional a~priori and a~posteriori error analysis \cite{Gudi10}. {In particular, nonconforming finite element schemes can be equivalent \cite{CCDGNN15,CC_DP_MS12} or superior to conforming finite element 
schemes \cite{CCKKDPMS15}.} The conforming FEMs for fourth-order problems require $C^1$ conformity and lead to cumbersome implementations, while the nonconforming Morley FEM 
is as simple as quadratic Lagrange finite elements; the reader may consider the 
finite element program in 
\cite[Sec. 6.5]{CCDGJH14} with less than 30 lines of Matlab for a proof of its simplicity. {The second-best scheme of easy implementations for  fourth-order problems is the 
$C^0$ interior penalty method (C0IP) \cite{BNRS17,CCGMNN18} with the benefit of higher-order 
variants and the disadvantage of a (critical) stability parameter choice.  The optimal convergence rates 
are known for the  adaptive Morley FEM \cite{CCDGJH14,CCNN19} in fourth-order problems, 
but open for C0IP;  cf.
\cite{kreuzer2019convergence, BN10, KP07}  for the state of the art in second-order applications.  
Hence the advantage of higher-order schemes is not guaranteed for C0IP and leaves 
the Morley FEM as the method of choice. 

\medskip

{ This relevance of the nonconforming Morley FEM  for 
fourth-order problems is not reflected in} the contributions  in the literature on the attractive application 
to semilinear problems with the linear biharmonic operator as the leading term 
{ (}plus quadratic lower-order contributions{)}.  
There are important  model applications of this problem in the stream-function formulation of the incompressible 2D Navier-Stokes equations \cite{BrezziRappazRaviart80,CN86,CN89} 
and in the von K\'{a}rm\'{a}n equations \cite{CiarletPlates,GMNN_NCFEM} for 
nonlinear plates in solid mechanics.  
This paper enriches the general theory of semilinear problems for trilinear low-order terms from conforming FEMs \cite{BrezziRappazRaviart80} to nonconforming FEMs with the medius analysis. This overcomes the smallness assumption (on the load $f$) in  \cite{CN89} and adds 
 a~posteriori error control beyond   \cite{CCGMNN18} for a dG discretisation.}
The Morley FEM allows for additional benefits and leads, for instance, to guaranteed
lower eigenvalue bounds \cite{CCDG14_eigenvalues}. 

\subsection{Discrete Stability}
This paper considers the local approximation of a general regular solution $u$
to a nonlinear function $N(u)$ without any
extra conditions. 
The invertible Frech\'et derivative $DN(u)$ of the  nonlinear function $N:X\to Y^*$ 
at a regular solution $u$ is by definition 
a linear bijection between Banach spaces $X$ and $Y^*$;  
this is equivalent to  an $\inf$-$\sup$ condition on the associated 
bilinear form $DN(u;\bullet,\bullet)=a+b: X\times Y\to\bR$ (split into two contributions
$a$ and $b$  in Section~2). For a nonconforming finite element  discretisation with some finite element 
space $X_h\times Y_h \not\subset X\times Y$, in the 
absence of further conditions,  the  $\inf$-$\sup$ condition for 
$a+b: X\times Y\to\bR$ does {\em not}
imply an $\inf$-$\sup$ condition  for the discrete bilinear form
$a_h+b_h:X_h\times Y_h\to\bR$.  Section~2 studies two general 
bilinear forms $\widehat{a}$ and $\widehat{b}$ defined on a superspace 
$\widehat{X}\times 
\widehat{Y}$ of $X_h\times Y_h$ and $X\times Y$, and introduces 
four parameters in  {\bf (H1)}-{\bf (H4)} with a sufficient  condition for an
$\inf$-$\sup$ condition  to hold for $a_h+b_h: X_h\times Y_h\to\bR$
to enable a Petrov-Galerkin scheme and is the first contribution of this paper. 

\medskip

There will be three applications of this abstract framework in this paper. The first
of which is on former results in  \cite{CCADNNAKP15}
on a nonconforming Crouzeix-Raviart FEM for well-posed 
second-order linear self-adjoint and indefinite elliptic problems: Since the framework
applies the medius analysis tools, there are no smoothness assumptions and the
feasibility and best-approximation property  for sufficiently small mesh-sizes 
is newly established for the Crouzeix-Raviart FEM for $L^\infty$ coefficients in this paper 
(compared to  piecewise Lipschitz continuous coefficients in  \cite{CCADNNAKP15}).

\subsection{Fourth-order semilinear problems}
The second and third applications of this discrete stability framework of Section~2
is on semilinear problems with a trilinear nonlinearity:  The stream function formulation of the incompressible 2D 
Navier-Stokes problem \cite{BrezziRappazRaviart80} in Section~4
and the von K\'{a}rm\'{a}n equations \cite{CiarletPlates,Brezzi} in Section~5 with conforming and Morley FEM.  
The abstract stability result (a) overcomes the high regularity assumptions 
$u \in H^2_0(\Omega) \cap H^3(\Omega)$ and (b) is not restricted to small 
data  as in \cite{CN86,CN89}.

\medskip

\subsection{Overview of further results} 
\noindent{The main abstract results are stated in {\bf (A)}-{\bf (D)} below.} Here and throughout the paper, {it is  assumed that the mesh needs to be sufficiently fine to well approximate the solutions to the linear problems associated with the leading elliptic differential operator. 

Throughout this subsection let $N:V\to V^*$ be a differentiable function in a Hilbert space $V$ with dual $V^*$
with one fixed  regular solution $u$ to $N(u)=0$. The Hilbert space is a Sobolev space $H_0^m(\Omega)$
associated to some  polyhedral bounded Lipschitz domain 
$\Omega\subset\mathbb{R}^n$ that is partitioned by arbitrarily fine 
shape-regular triangulations into simplices. The latter form  a  family $\bT$ and given  any $\delta>0$,
 let $ \bT(\delta)$ denote the (nonempty) 
subset of all triangulations $\cT$ of maximal mesh-size smaller than or equal to $\delta$.
For each  $\cT\in \bT$, suppose there is a conforming or nonconforming finite element space $V_h(\cT)$ and a 
differentiable function $N_h:V_h(\cT)\to V^*_h(\cT)$ with additional conditions; in particular, there 
is a norm $\| \bullet \|_{\widehat{V}}$ on $V+V_h(\cT)$ that extends the norm in $V$.
(Notice the simplified notation $N_h\equiv N_h(\cT)$.) 
This paper discusses conditions in  {\bf (H1)}-{\bf(H6)} sufficient for the subsequent consequences.} 

\medskip

{\bf (A).} 
 {\em There exist  $\epsilon,\delta>0$ such that, for all
$\displaystyle\cT\in\bT(\delta)$, there exists a unique discrete solution $u_h\in 
V_h(\cT) $ to $N_h(u_h)=0$ with $\| u-u_h\|_{\widehat{V}} \le \epsilon$.}

{\bf (B).} 
 {\em  There exist $\epsilon,\delta,\rho>0$ such that  {\bf (A)} holds and, 
for all $\displaystyle\cT\in\bT(\delta)$ and  for any  initial iterate  
$u_h^{(0)}\in  V_h(\cT)$ with $\|u_h-u_h^{(0)}\|_{\widehat{V}}  \le \rho$,
 the Newton scheme converges
quadratically to  $u_h$ .}

{\bf (C).}  {\em There exist $\epsilon,\delta,C_{\text{\rm qo}}>0$ such that {\bf (A)} holds and, for all 
$\cT\in\bT(\delta)$,
$$
\| u-u_h\|_{\widehat{V}}\leq 
C_{\text{\rm qo}}\left(\min_{v_h\in  V_h(\cT)}\| u-v_h\|_{\widehat{V}}+{\rm apx}(\cT)\right)
$$
with some approximation term ${\rm apx}(\cT)$ to be specified in the particular 
application.}

A local reliable and efficient
a~posteriori error control holds even for inexact solve (owing to a 
termination in an iterative solver)  in the   sense of

{\bf (D).} {\em 
There exist   $\epsilon,\delta,C_{\text{\rm rel}},C_{\text{\rm eff}}>0$ such that  
any approximation   $v_h\in V_h(\cT) $ 
with $\| u-v_h\|_{\widehat{V}}\leq\epsilon$ and $\displaystyle\cT\in\bT(\delta)$
satisfies
$$
C_{\text{\rm rel}}^{-1}\| u-v_h\|_{\widehat{V} } \le 
\| N(v_h)\|_{\widehat V^*} + \min_{v\in V}  \| v_h-v \|_{\widehat{V}}
\le C_{\text{\rm eff}}\| u-v_h\|_{\widehat{V} }.
$$
}

It is part of the abstract results in Section~2 and 3 to identify the reliability and efficiency constants in the above displayed estimate and prove that the positive constants 
$\epsilon$, $\delta$, $\rho$, $C_{\rm qo}$, $C_{\text{\rm rel}}$, and 
$C_{\text{\rm eff}}$ are  mesh-independent. 

The abstract error control in {\bf (D)} is the point of departure in the applications to the  stream function formulation of the incompressible 2D Navier-Stokes problem \cite{BrezziRappazRaviart80} in Section~4
and the von K\'{a}rm\'{a}n equations \cite{CiarletPlates,Brezzi} in Section~5.  
This paper  establishes the first  reliable  estimate of $\| N(v_h)\|_{\widehat V^*} $ and 
$\min_{v\in V}  \| v_h-v \|_{\widehat{V}}$ in terms of an explicit residual-based
error estimator for the conforming and Morley FEM and discusses its efficiency. 

\subsection{Outlook}

This  presentation is restricted to quadratic problems in which the weak formulation involves  a trilinear form for a simple outline to cover  two important semilinear fourth-order problems. The generalisation to more general and stronger nonlinearities, however, requires appropriate  growth conditions in various norms and involves a more technical framework. The presentation matches exactly the nonconforming applications 
(Crouzeix-Raviart and Morley finite elements); other schemes
like  the discontinuous Galerkin schemes \cite{CCGMNN18} with their discrete norms and various jump conditions could be included with more additional technicalities.

\medskip
{The ideas developed in this paper extend to other semilinear problems,
 to optimal control and obstacle problems \cite{BSZZ13,BSZ12}  (governed by fourth-order plates and very thin plates), fully nonlinear Monge-Ampere equations based on vanishing moment method \cite{N10}. 
Moreover, the  lowest-order version of   skeletal or polytopal, hybridizable  discontinuous Galerkin and   
higher-order hybrid methods   \cite{DDE15, DEL16,dd19}  is a perturbation of the
nonconforming Crouzeix-Raviart finite element method for the Poisson problems. It is therefore expected that
the  Morley FEM is related to the lowest-order variant of skeletal schemes  for PDEs governed by fourth-order elliptic equations \cite{BDGK17}. In this way this paper stimulates the development  of 
the {\it a priori} and {\it a posteriori} error analysis of  those schemes.}

\subsection{General notation}
Standard notation on Lebesgue and Sobolev spaces applies throughout the paper and $\|\bullet\|$ abbreviates $\|\bullet\|_{L^2(\Omega)}$ with a $L^2$ scalar product $(\bullet,\bullet)_{L^2(\Omega)}$, while the duality brackets $<\bullet,\bullet>_{V^*\times V}$ 
are reserved for a dual pairing in $V^*\times V$; 
$\|\bullet\|_{\infty}$ abbreviates the norm in $L^\infty(\Omega)$; 
$H^m(\Omega)$ denotes the  Sobolev  spaces of order $m$ with  
norm $\|\bullet\|_{H^m(\Omega)}$; $H^{-1}(\Omega)$ 
(resp. $H^{-2}(\Omega)$) is the dual space of 
$H^1_0(\Omega):=\{v\in H^1(\Omega): v|_{\partial \Omega}=0\}$ 
(resp. $H^2_0(\Omega):=\{v\in H^2(\Omega): v|_{\partial \Omega}=\frac{\partial v}{\partial \nu}|_{\partial \Omega}=0\}$). With a regular triangulation $\cT$ of the polygonal Lipschitz domain $\Omega\subset\bR^n$ into simplices, associate 
its piecewise constant mesh-size $h_\cT\in P_0(\cT)$ with 
$h_\cT|_T:=h_T:=\text{diam}(T) \approx  |T|^{1/n}$ for all $T\in\cT$ and its 
maximal mesh-size  $h_{\max}:=\max h_\cT$.
Here and throughout, 
$$
P_k(\cT):=\left\{v\in L^2(\Omega):\,\forall T\in\cT,v|_{T}\in P_k(T)\right\}
$$
denotes the piecewise polynomials of degree at most $k\in\mathbb{N}_0$
and let  $\Pi_k$ denote the  $L^2(\Omega)$ (resp. $L^2(\Omega;\bR^n)$ or 
$L^2(\Omega;\bR^{n\times n})$) orthogonal projection onto $P_k(\cT)$ 
(resp. $P_k(\cT;\bR^m)$ or $P_k(\cT;\bR^{m \times m})$). Oscillations of 
degree $k$ read 
\[
{\rm osc}_k(\bullet,\cT):=\|h_{\cT}^{p}(I-\Pi_k)\bullet\|_{L^2(\Omega)}
\]
with its square ${\rm osc}_k^2(\bullet,\cT):={\rm osc}_k(\bullet,\cT)^2$ for 
$p=1$ for second-order in Section~2
and $p=2$ for fourth-order problems in Sections~4 and 5. 
The notation $A\lesssim B$ means there exists a generic 
$h_{\cT}$-independent constant $C$ such that $A\leq CB$; $A\approx B$ abbreviates $A\lesssim B\lesssim A$. In the sequel, $C_{\text {rel}}$ and $C_{\text {eff}}$ denote generic reliability and efficiency constants. The set of all $n\times n$ real symmetric matrices is $\bS:=\bR^{n\times n}_{sym}$. 
\section{Well-posedness of the discrete problem}\label{infsup}
This section presents sufficient conditions for the stability of nonconforming discretizations of a well-posed linear problem. Subsection 2.1 introduces four parameters {\bf (H1)}-{\bf (H4)} and a condition on them  sufficient for a discrete inf-sup condition for the sum $a+b$ of two bilinear forms $a, b: X\times Y\to \bR$ extended to {superspaces} $\widehat{X}\supset X+X_h$ and  
$\widehat{Y}\supset Y+Y_h$. Subsection 2.2 discusses a first application to second-order non-self adjoint and indefinite elliptic problems \cite{CCADNNAKP15}. 

\subsection{Abstract discrete inf-sup condition}\label{sec:abs_result}
Let $\widehat{X}$ (resp. $\widehat{Y}$) be a real Banach space with norm $\|\bullet\|_{\widehat{X}}$ (resp. $\|\bullet\|_{\widehat{Y}}$) and suppose $X$ and $X_h$ (resp. $Y$ and $Y_h$) are two complete linear subspaces of $\widehat{X}$ (resp. $\widehat{Y}$) with inherited norms $\|\bullet\|_{X}:=\left(\|\bullet\|_{\widehat{X}}\right)|_{X}$ and $\|\bullet\|_{X_h}:=\left(\|\bullet\|_{\widehat{X}}\right)|_{X_h}$ (resp. $\|\bullet\|_{Y}:=\left(\|\bullet\|_{\widehat{Y}}\right)|_{Y}$ and $\|\bullet\|_{Y_h}:=\left(\|\bullet\|_{\widehat{Y}}\right)|_{Y_h}$). Let $\widehat{a},\widehat{b}:\widehat{X}\times \widehat{Y}\to \bR$ be bounded bilinear forms and abbreviate
\begin{align}\label{defn_ab}
&a:=\widehat{a}|_{X\times Y},\: a_h:=\widehat{a}|_{X_h\times Y_h}\text{ and } b:=\widehat{b}|_{X\times Y},\: b_h:=\widehat{b}|_{X_h\times Y_h}.
\end{align}

Let the bilinear forms $a$ and $b$ be associated to the linear operators $A$ and 
$B\in L(X;Y^*)$, e.g., $Ax:=a(x,\bullet)\in Y^*$ for all $x\in X$. Suppose that the linear operator   
$\widehat{A}\in L(\widehat{X}; \widehat{Y}^*)$ (resp. 
$A+B\in L(X;Y^*)$)  associated to the  
bilinear form $\widehat{a}$ 
(resp.  $a+b$)  is invertible and 
\begin{align} 
0<\widehat{\alpha}&:=\inf_{\substack{\widehat{x}\in \widehat{X} \\ \|\widehat{x}\|_{\widehat{X}}=1}} \sup_{\substack{\widehat{y}\in \widehat{Y}\\ \|\widehat{y}\|_{\widehat{Y}}=1}}\widehat{a}(\widehat{x},\widehat{y});\label{dis_Ah_infsup}\\
0<\beta&:=\inf_{\substack{x\in X\\ \|x\|_{X}=1}} \sup_{\substack{y\in Y\\ \|y\|_{Y}=1}}(a+b)(x,y).\label{cts_infsup}
\end{align}
Suppose that three linear operators $P\in L(\widehat{Y};Y_h)$, $Q\in L(X_h; X)$, 
$\cC\in L(Y_h;Y)$ exist and  lead to parameters 
$\delta_1,\delta_2,\delta_3,\Lambda_4\ge 0$ in 
\begin{itemize}
\item[{\bf (H1)}]
$\displaystyle\delta_1:=\sup_{\substack{x_h\in X_h\\ \|x_h\|_{X_h=1}}} \sup_{\substack{y_h\in Y_h\\ \|y_h\|_{Y_h}=1}}\widehat{a}\left(A^{-1}\left(\widehat{b}(x_h,\bullet)|_{Y}\right),y_h-\cC y_h\right);$
\item[{\bf (H2)}]
$\displaystyle\delta_2:=\sup_{\substack{x_h\in X_h\\ \|x_h\|_{X_h=1}}} \sup_{\substack{\widehat{y}\in \widehat{Y}\\ \|\widehat{y}\|_{\widehat{Y}}=1}}\widehat{a}\left(x_h{ + }A^{-1}\left(\widehat{b}(x_h,\bullet)|_{Y}\right),\widehat{y}-P\widehat{y}\right);$
\item[{\bf (H3)}]
$\displaystyle\delta_3:=\sup_{\substack{x_h\in X_h\\ \|x_h\|_{X_h}=1}}\left\|\widehat{b}\big{(}x_h,(1-\cC)\bullet\big{)}\right\|_{ Y_h^*};$
\item[{\bf (H4)}] $\exists\Lambda_4<\infty\,\forall x_h\in X_h\;\;
\|(1-Q)x_h\|_{\widehat{X}}\leq \Lambda_4\;{\rm dist}_{\|\bullet\|_{\widehat{X}}}\left(x_h,X\right).$
\end{itemize}
Abbreviate the bound 
$\| \widehat{b}\|_{\widehat{X}\times Y^*}$ 
 of the bilinear form $ \widehat{b}|_{\widehat{X}\times Y^*}$
 simply by  $\| \widehat{b}\|$
 and  set $\|a\|:=\|A\|_{L(X;Y^*)}$  as well as 
 $\|A^{-1}\|:=\|A^{-1}\|_{L(Y^*;X)}$ --- whenever there is no risk of confusion 
 (e.g. with the $L^2$ norm $\|\bullet\|$ of a Lebesgue function). 
If  {\bf (H4)} holds with $0\leq\Lambda_4<\infty$, set
\begin{equation}\label{AsmpCondn}
\widehat{\beta}:=\frac{\beta}{\Lambda_4\beta+\|a\|\left(1+\Lambda_4\left(1+\|A^{-1}\|\|\widehat{b}\|\right)\right)}>0.
\end{equation}
In the applications discussed in this paper, $\delta_1+\delta_2+\delta_3$ 
from {\bf (H1)}-{\bf (H3)} will be smaller than $\widehat{\alpha}\widehat{\beta}$ so that the subsequent result provides a discrete inf-sup condition with $\beta_h>0$.

\begin{thm}[discrete inf-sup condition]\label{dis_inf_sup_thm}
Under the aforementioned notation, \eqref{dis_Ah_infsup}-{\eqref{AsmpCondn}} 
and {\bf (H1)}-{\bf (H4)} imply 
\begin{equation}\label{eqdis_inf_sup_defbetah}
\widehat{\alpha}\widehat{\beta}-\left(\delta_1+\delta_2+\delta_3\right)\leq \beta_h:=\inf_{\substack{x_h\in X_h\\ \| x_h\|_{X_h}=1}}\sup_{\substack{y_h\in Y_h\\ \| y_h\|_{Y_h}=1}} 
(a_h+b_h)(x_h,y_h).
	\end{equation}
\end{thm}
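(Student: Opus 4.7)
The plan is to convert the discrete inf-sup for $a_h+b_h$ into a perturbation of the extended inf-sup \eqref{dis_Ah_infsup} for $\widehat a$, with the three $\delta_j$'s measuring the sources of nonconformity permitted by the operators $P$, $\mathcal C$, and (H4). Fix $x_h\in X_h$ with $\|x_h\|_{X_h}=1$, and introduce the Riesz-type correction $\xi:=A^{-1}\bigl(\widehat b(x_h,\bullet)|_Y\bigr)\in X$, so that $a(\xi,y)=\widehat b(x_h,y)$ for every $y\in Y$ and $\|\xi\|_X\le\|A^{-1}\|\,\|\widehat b\|$. The extension $\widehat x:=x_h+\xi\in\widehat X$ is engineered so that $\widehat a$-testing on $\widehat x$ mimics $(a_h+b_h)$-testing on $x_h$ up to corrections involving $(1-\mathcal C)y_h$ and $(1-P)\widehat y$.

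For any $\varepsilon>0$, select some $\widehat y\in\widehat Y$ with $\|\widehat y\|_{\widehat Y}=1$ and $\widehat a(\widehat x,\widehat y)\ge(\widehat\alpha-\varepsilon)\,\|\widehat x\|_{\widehat X}$ from \eqref{dis_Ah_infsup}, and test the discrete form at $y_h:=P\widehat y\in Y_h$. Inserting the conforming companion $\mathcal C y_h\in Y$ and replacing $a(\xi,\mathcal C y_h)$ by $\widehat b(x_h,\mathcal C y_h)$ via the defining relation of $\xi$ yields the pivotal algebraic identity
\begin{equation*}
(a_h+b_h)(x_h,y_h)=\widehat a(\widehat x,y_h)+\widehat b\bigl(x_h,(1-\mathcal C)y_h\bigr)-\widehat a\bigl(\xi,(1-\mathcal C)y_h\bigr).
\end{equation*}
The last two terms are bounded directly by (H3) and (H1) by $\delta_3\|y_h\|_{Y_h}$ and $\delta_1\|y_h\|_{Y_h}$. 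For the main term, split $\widehat a(\widehat x,y_h)=\widehat a(\widehat x,\widehat y)-\widehat a(\widehat x,(1-P)\widehat y)$, bound the second summand by (H2) via $\delta_2$, and apply the choice of $\widehat y$ to the first, giving
\begin{equation*}
(a_h+b_h)(x_h,y_h)\ge(\widehat\alpha-\varepsilon)\,\|\widehat x\|_{\widehat X}-\delta_2-(\delta_1+\delta_3)\|y_h\|_{Y_h}.
\end{equation*}

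The crux is the lower bound $\|\widehat x\|_{\widehat X}\ge\widehat\beta$ under the normalization $\|x_h\|_{X_h}=1$, which is where the particular constant \eqref{AsmpCondn} materialises. I would obtain it by applying the continuous inf-sup \eqref{cts_infsup} to the conforming companion $Qx_h\in X$, expanding $(a+b)(Qx_h,y)=\widehat a(\widehat x,y)-(\widehat a+\widehat b)(x_h-Qx_h,y)$ for $y\in Y$ (which exploits $a(\xi,\cdot)=\widehat b(x_h,\cdot)|_Y$), bounding the non-conformity corrections by $\|a\|$ and $\|\widehat b\|$, and invoking the sharp choice $v:=-\xi\in X$ in (H4) to conclude $\|x_h-Qx_h\|_{\widehat X}\le\Lambda_4\,\mathrm{dist}(x_h,X)\le\Lambda_4\|x_h-(-\xi)\|_{\widehat X}=\Lambda_4\|\widehat x\|_{\widehat X}$. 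A triangle step $\|x_h\|_{X_h}\le\|Qx_h\|_X+\|x_h-Qx_h\|_{\widehat X}$ and a short rearrangement then produce exactly the denominator of \eqref{AsmpCondn}.

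Combining these two estimates, dividing by $\|y_h\|_{Y_h}$ (with the degenerate case $P\widehat y=0$ handled by perturbing $\widehat y$), letting $\varepsilon\downarrow 0$, and absorbing the normalization delivers $\beta_h\ge\widehat\alpha\widehat\beta-(\delta_1+\delta_2+\delta_3)$. The main obstacle is the norm lower bound $\|\widehat x\|_{\widehat X}\ge\widehat\beta$ in the previous paragraph: matching the rather particular algebraic shape of \eqref{AsmpCondn} demands careful bookkeeping of where $\|a\|$, $\|\widehat b\|$, $\|A^{-1}\|$, $\beta$, and $\Lambda_4$ each enter the denominator, and of the interplay between the two conforming companions $Q$ and the Riesz corrector $\xi$; the remaining steps amount to a standard perturbation argument built on (H1)–(H3).
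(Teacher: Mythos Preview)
Your outline coincides with the paper's proof: set $\xi:=A^{-1}(\widehat b(x_h,\cdot)|_Y)$, prove the key lower bound $\|x_h+\xi\|_{\widehat X}\ge\widehat\beta$ from the continuous inf-sup \eqref{cts_infsup} together with {\bf (H4)} (the paper does this via the auxiliary $\eta:=A^{-1}(b(Qx_h,\cdot))$ and the triangle chain $\|Qx_h+\eta\|\le\|x_h-Qx_h\|+\|x_h+\xi\|+\|\xi-\eta\|$, which is algebraically equivalent to your expansion), then pick an almost-maximiser $\widehat y$ for $\widehat a(x_h+\xi,\cdot)$ and pass to $y_h:=P\widehat y$ through {\bf (H1)}--{\bf (H3)}.

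The one step to repair is your closing ``dividing by $\|y_h\|_{Y_h}$''. The $\delta_2$-contribution arises from {\bf (H2)} with the normalisation $\|\widehat y\|_{\widehat Y}=1$ and is \emph{not} homogeneous in $y_h$, so division does not deliver the constant $\widehat\alpha\widehat\beta-(\delta_1+\delta_2+\delta_3)$. The paper does not divide; it writes $\gamma_h:=(a_h+b_h)(x_h,y_h)\le\|(a_h+b_h)(x_h,\cdot)\|_{Y_h^*}$ directly. That inequality, as well as the paper's application of {\bf (H1)} and {\bf (H3)} without a $\|y_h\|$-factor, tacitly uses $\|P\widehat y\|_{Y_h}\le1$. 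This holds in every application (the Galerkin projection, the $a_{\text{pw}}$-best approximation, and $I_M$ all satisfy $\|P\|\le1$), but it is not listed among the hypotheses---so your proof and the paper's share this silent assumption; just replace the division by the direct dual-norm bound and you match the paper exactly.
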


\begin{proof}
Given any $x_h\in X_h$ with $\| x_h\|_{X_h}=1$, define $$x:=Qx_h,\: \xi=A^{-1}\left(\widehat{b}(x_h,\bullet)|_Y\right)\in X,\text{ and }\eta=A^{-1}\left(b(x,\bullet)|_Y\right)\in X .$$ 

\noindent The inf-sup condition \eqref{cts_infsup} and $A\eta=Bx$ lead to
\begin{equation*}
\beta\|x\|_X\leq \|Ax+Bx\|_{Y^*}=\|A(x+\eta)\|_{Y^*}\leq \|a\|\|x+\eta\|_{X}.
\end{equation*}
This and triangle inequalities imply
\begin{equation}\label{xbound}
{\beta}/{\|a\|}\,\|x\|_X\leq\|x+\eta\|_{X}\leq \|x-x_h\|_{\widehat{X}}+\|x_h+\xi\|_{\widehat{X}}+\|\eta-\xi\|_X.
\end{equation}
The definition of $\xi$ and $\eta$, the boundedness of the operator $A^{-1}$ and 
of the bilinear form $\widehat{b}|_{\widehat{X}\times Y}$ show
\begin{equation}\label{xi_eta_bdd}
\|\xi-\eta\|_X=\|A^{-1}\left(\widehat{b}(x-x_h,\bullet)|_Y\right)\|_{X}\leq \|A^{-1}\|\|\widehat{b}\|\|x-x_h\|_{\widehat{X}}.
\end{equation}
The combination of \eqref{xbound}-\eqref{xi_eta_bdd} reads
\begin{align}\label{xfirst_bdd}
{\beta}/{\|a\|}\,\|x\|_{X}\leq \|x_h+\xi\|_{\widehat{X}}+\left(1+\|A^{-1}\|\|\widehat{b}\|\right)\|x-x_h\|_{\widehat{X}}.
\end{align}
Since {\bf (H4)} implies 
\begin{equation}\label{A4app}
\| x-x_h\|_{\widehat{X}}\leq\Lambda_4\| x_h+\xi\|_{\widehat{X}},
\end{equation}
the estimate \eqref{xfirst_bdd} results in 
\begin{align}
\|x\|_X\leq {\|a\|}/{\beta}\left(1+\Lambda_4\left(1+\|A^{-1}\|\|\widehat{b}\|\right)\right)\|x_h+\xi\|_{\widehat{X}}\label{xnew_bdd}.
\end{align}
The triangle inequality and \eqref{A4app}-\eqref{xnew_bdd} lead to
\begin{align*}
1=\|x_h\|_{X_h}&\leq\|x-x_h\|_{\widehat{X}}+\|x\|_X\\
&\leq \left(\Lambda_4+{\|a\|}/{\beta}\left(1+\Lambda_4\left(1+\|A^{-1}\|\|\widehat{b}\|\right)\right)\right)\|x_h+\xi\|_{\widehat{X}}.
\end{align*}
With the definition of $\widehat{\beta}$ in \eqref{AsmpCondn}, this reads
\begin{equation}\label{beta_bdd}
\widehat{\beta}\leq \|x_h+\xi\|_{\widehat{X}}.
\end{equation}

For given $x_h+\xi\in\widehat{X}$ and for any $0<\epsilon<\widehat{\alpha}$, the inf-sup condition \eqref{dis_Ah_infsup} implies the existence of some $\widehat{y}\in \widehat{Y}$ with $\|\widehat{y}\|_{\widehat{Y}}=1$ and
\begin{equation}\label{alpha_bdd}
(\widehat{\alpha}-\epsilon)\|x_h+\xi\|_{\widehat{X}}\leq \widehat{a}(x_h+\xi,\widehat{y})=\widehat{a}(x_h+\xi,\widehat{y}-P\widehat{y})
+\widehat{a}(x_h+\xi,P\widehat{y}).
\end{equation}
Since $\widehat{a}(\xi,\cC y_h)=\widehat{b}(x_h,\cC y_h)$ for $y_h:=P\widehat{y}$, the {latter} term is equal to
\begin{equation*}
\widehat{a}(x_h+\xi,y_h)=a_h(x_h,y_h)+b_h(x_h,y_h)+\widehat{a}(\xi,y_h-\cC y_h)+\widehat{b}(x_h,\cC y_h-y_h).
\end{equation*}
Let $\gamma_h:=a_h(x_h,y_h)+b_h(x_h,y_h)$, then {\bf (H1)}-{\bf (H3)} and \eqref{alpha_bdd} lead to
\begin{align}\label{bdd_deltas}
\widehat{a}(x_h+\xi,\widehat{y})\leq \gamma_h+\delta_1+\delta_2+\delta_3.
\end{align}
The combination of \eqref{beta_bdd}-\eqref{bdd_deltas} and 
$\epsilon\searrow0$ in the end result in
\begin{equation*}
\widehat{\alpha}\widehat{\beta}-(\delta_1+\delta_2+\delta_3)\leq \gamma_h\leq \|a_h(x_h,\bullet)+b_h(x_h,\bullet)\|_{Y_h^*}.
\end{equation*}
The last estimate holds for an arbitrary $x_h$ with $\|x_h\|_{X_h}=1$ and so proves the discrete inf-sup condition  $\widehat{\alpha}\widehat{\beta}-\left(\delta_1+\delta_2+\delta_3\right)\leq \beta_h$. 
\end{proof}

It is well known that  a  positive $\beta_h>0$ in  \eqref{eqdis_inf_sup_defbetah} implies the 
best-approximation for the Petrov-Galerkin scheme 
\cite{MR3097958,MR2373954,Braess,DiPetroErn12}
in the following sense.

\begin{cor}[best-approximation]\label{rembestapproximation}  
Suppose $(\widehat{X},\widehat{a})$ is a  Hilbert space  and 
$u\in X$, $u_h\in X_h$, and  $\widehat{F}\in \widehat{Y}^*$ satisfy 
$(a+b)(u,\bullet)=F:=\widehat{F}|_Y\in Y^*$ and 
$(a_h+b_h)(u_h,\bullet)=F_h:=\widehat{F}|_{Y_h}\in Y_h^*$. Then  
\[
\beta_h \|  u- u_h \|_{\widehat{X}} 
\le M \min_{x_h\in X_h}   \| u - x_h \|_{\widehat{X}}
+
\sup_{\substack{y_h\in Y_h\\ \| y_h\|_{Y_h}=1}}
\left(F_h(y_h)- (\widehat{a}+\widehat{b})( u ,y_h)\right)
\]
with the bound $M:=  \|  \widehat{a}+\widehat{b} \|_{ \widehat{X} \times Y_h }  
\le \|  \widehat{a}+\widehat{b} \|$ of the bilinear form 
$ (\widehat{a}+\widehat{b})|_{\widehat{X} \times Y_h}$.
\end{cor}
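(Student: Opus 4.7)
The plan is a Strang-type argument: reduce the error $u-u_h\in\widehat{X}$ (which does not itself lie in $X_h$) to a purely discrete quantity via the triangle inequality, then extract the estimate from the discrete $\inf$-$\sup$ condition of Theorem~\ref{dis_inf_sup_thm}. First I would fix an arbitrary $x_h\in X_h$ and apply Theorem~\ref{dis_inf_sup_thm} to $x_h-u_h\in X_h$: for every $\epsilon>0$ there exists $y_h\in Y_h$ with $\|y_h\|_{Y_h}=1$ and $(\beta_h-\epsilon)\|x_h-u_h\|_{\widehat{X}}\le (a_h+b_h)(x_h-u_h,y_h)$.

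Next, I would rewrite the right-hand side by invoking the two equations of the corollary. Since $(a_h+b_h)(u_h,y_h)=F_h(y_h)$ and $(a_h+b_h)=(\widehat{a}+\widehat{b})|_{X_h\times Y_h}$, inserting and subtracting $(\widehat{a}+\widehat{b})(u,y_h)$ yields
\[
(a_h+b_h)(x_h-u_h,y_h) = (\widehat{a}+\widehat{b})(x_h-u,y_h) + \bigl((\widehat{a}+\widehat{b})(u,y_h)-F_h(y_h)\bigr).
\]
The first summand is at most $M\|u-x_h\|_{\widehat{X}}$ directly from the definition of $M$; the second is controlled by the stated supremum, after possibly replacing $y_h$ by $-y_h$, which is harmless since the unit sphere in $Y_h$ is symmetric.

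Finally, I would combine this bound on $\|x_h-u_h\|_{\widehat{X}}$ with the triangle inequality $\|u-u_h\|_{\widehat{X}}\le\|u-x_h\|_{\widehat{X}}+\|x_h-u_h\|_{\widehat{X}}$, multiply through by $\beta_h$, let $\epsilon\searrow 0$, and then take the infimum over $x_h\in X_h$. The only real bookkeeping concern is the extra $\beta_h\|u-x_h\|_{\widehat{X}}$ term arising from the triangle inequality; this is absorbed into the $M$-coefficient via the elementary bound $\beta_h\le M$, which holds since the discrete $\inf$-$\sup$ constant cannot exceed the norm of the bilinear form restricted to $X_h\times Y_h$. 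I do not expect any substantive obstacle beyond this bookkeeping and the sign/symmetry manipulation in the supremum.
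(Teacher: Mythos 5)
Your Strang-type argument is internally consistent but proves a \emph{weaker} inequality than the one stated, so it does not establish the corollary. Tracking your constants carefully: the discrete $\inf$-$\sup$ condition and your rewrite give
\[
\beta_h\|x_h-u_h\|_{\widehat{X}}\le M\|u-x_h\|_{\widehat{X}}
+\sup_{\substack{y_h\in Y_h\\\|y_h\|_{Y_h}=1}}\bigl(F_h(y_h)-(\widehat{a}+\widehat{b})(u,y_h)\bigr),
\]
and then multiplying the triangle inequality
$\|u-u_h\|_{\widehat{X}}\le\|u-x_h\|_{\widehat{X}}+\|x_h-u_h\|_{\widehat{X}}$ by $\beta_h$ produces the coefficient
$\beta_h+M$ on the best-approximation term, not $M$. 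Your closing remark that $\beta_h\le M$
``absorbs'' the extra $\beta_h\|u-x_h\|_{\widehat{X}}$ is the gap: that bound only yields
$\beta_h+M\le 2M$, so your proof at best gives the corollary with $M$ replaced by $2M$. This is
not harmless bookkeeping; the sharper constant $M$ is precisely why the corollary assumes
$(\widehat{X},\widehat{a})$ is a Hilbert space, an assumption your proof never invokes---a warning
sign that should have prompted a second look.

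The paper achieves the constant $M$ by a genuinely different decomposition. It introduces the
oblique Galerkin projection $\hat G\colon\widehat{X}\to X_h$ defined by
$(\widehat{a}+\widehat{b})(\widehat{x}-\hat G\widehat{x},y_h)=0$ for all $y_h\in Y_h$, bounds
$\|\hat G\|\le M/\beta_h$ by the same $\inf$-$\sup$ computation you performed, and then applies
the Kato/Xu--Zikatanov identity $\|1-\hat G\|=\|\hat G\|$ (valid for any nontrivial bounded
projection on a Hilbert space) to conclude
$\|u-\hat G u\|_{\widehat{X}}=\|(1-\hat G)(u-x_h)\|_{\widehat{X}}\le (M/\beta_h)\|u-x_h\|_{\widehat{X}}$
directly, with no loss. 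The decomposition $u-u_h=(u-\hat G u)+(\hat G u-u_h)$ then separates a
best-approximation piece of size exactly $(M/\beta_h)\operatorname{dist}$ from a consistency
piece estimated exactly as in your second step. To repair your argument you would need to
replace the split $u-u_h=(u-x_h)+(x_h-u_h)$ by the split through $\hat G u$, which forces the
introduction of the projection and its norm identity; a pure Strang manipulation cannot avoid
the $\beta_h+M$ coefficient.
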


The proof of the quasi-optimal convergence 
for a stable discretisation is nowadays standard  in all finite element textbooks 
in the context of the Strang-Fix lemmas.  

\subsection{Second-order linear non-selfadjoint \\  and indefinite elliptic problems}%
\label{sec:SelfAdjoint}
This subsection applies {\bf (H1)}-{\bf (H4)} to second-order linear self-adjoint and indefinite elliptic problems and establishes a priori estimates for conforming and nonconforming FEMs under more general conditions on the smoothness of the coefficients of the elliptic operator and for $\Omega \subset {\mathbb R}^n$  vis-\`{a}-vis \cite{CCADNNAKP15}.

\subsubsection{Mathematical model}\label{subsectMathematicalmodelinSection2}
The strong form of a second-order problem with $L^\infty$ coefficients ${\bf A},$  {\bf b} and $\gamma$  
reads: Given $f\in L^2(\Omega)$ seek $u\in V:=H^1_0(\Omega)$ such that 
\begin{eqnarray}\label{eq1}
 \mathcal{L}u :=  -\nabla \cdot (\boldmath A\nabla u+u {\mathbf b})+ \gamma \: u=f.
\end{eqnarray}
The coefficients ${\bf A}\in L^\infty(\Omega;\bS),\; 
{\mathbf b}\in L^\infty(\Omega;\bR^n),\gamma\in L^\infty(\Omega)$ 
satisfy $0<\underline{\lambda}\leq \lambda_1({\bf A}(x))\leq \cdots \leq \lambda_n({\bf A}(x))
\leq \overline{\lambda}<\infty$ for the eigenvalues $\lambda_j({\bf A}(x))$ 
of the SPD ${\bf A}(x)$  for a.e. $x\in \Omega$. 

\noindent For $u,v\in V,$ the expression
\begin{equation}\label{defn_a}
a(u,v):=\int_{\Omega}({\bf A}\nabla u)\cdot\nabla v\dx
\end{equation}
defines a scalar product on $V$ (and $V$ is endowed with this scalar product in the sequel)
equivalent to the standard scalar product in the sense { that} 
the $H^1$-seminorm $|\bullet|_{H^1(\Omega)}:=\|\nabla\bullet\|$ in $V$ satisfies
\begin{equation}\label{lambda_bdd}
\underline{\lambda}^{1/2}| \bullet|_{H^1(\Omega)} \leq \|\bullet\|_{a}
:=a(\bullet,\bullet)^{1/2}\leq \overline{\lambda}^{1/2}| \bullet|_{H^1(\Omega)}.
\end{equation}
Given the bilinear form $b:V\times V\to \bR$ with
\begin{align}\label{defn_b}
b(u,v)&:=\int_{\Omega}\left(u {\mathbf b}\cdot\nabla v+\gamma uv\right)\dx \; {\rm for \; all \;  } u,v \in V
\end{align}
and the linear form $F\in L^2(\Omega)^*\subset H^{-1}(\Omega)=: V^*$ with $F(v):=\int_{\Omega}fv\dx$ for all $v\in V$, the weak formulation of \eqref{eq1} seeks the solution $u\in V$ to
\begin{align}\label{WeakFrmSec}
(a+b)(u,v):=a(u,v)+b(u,v)=F(v)\quad\text{ for all } v\in V.
\end{align}
In the absence of further conditions on the smoothness of the coefficients, any 
higher regularity of the weak solution $u\in H^1_0(\Omega)$ \eqref{eq1}
in the form  $u\in H^s(\Omega)$ for any $s>1$ 
 is not guaranteed even for  $f\in C^\infty(\Omega)$
\cite[p. 20]{SchatzWang96}.

\subsubsection{Triangulations}
Throughout this paper, $\bT$ is a set of shape-regular triangulations of the polyhedral 
bounded Lipschitz domain $\Omega\subset \bR^n$ into simplices.
Given an initial  triangulation $\cT_0$ of 
$\Omega$, let
 the newest-vertex bisection define local  mesh-refining that leads to a set of  
shape-regular triangulations $\cT\in\bT$. 

Shape-regularity means that there exists  a universal constant 
$\kappa>0$  such that the maximal diameter ${\rm diam}(B)$ 
of a ball $B\subset K$ satisfies 
$\kappa\, h_K\leq {\rm diam}(B)\leq {\rm diam}(K)=:h_K$ for any $K\in\cT \in \bT$.
Given $\cT\in\bT$, let $h_{\cT}\in P_0(\cT)$ be piecewise constant with $h_{\cT}|_K=h_K
={\rm diam} (K)$ for $K\in\cT$ and let $h_{\max} :=h_{\max}(\cT):=\max h_{\cT}$;  recall  
$\bT(\delta):=\left\{\cT\in\bT :  h_{\max}(\cT)\le  \delta\right\}$ for any  $\delta >0$.

The set of all sides of {the shape-regular triangulation $\cT$ of 
$\Omega$ into simplices} is denoted by $\cE$.
The set of all internal vertices (resp. boundary vertices) and  interior sides (resp.  boundary sides) 
of $\cT$ are denoted by $\cN (\Omega)$ (resp.  $\cN(\partial\Omega)$) and 
$\cE (\Omega)$ (resp. $\cE (\partial\Omega)$).

\subsubsection{Conforming FEM}
Let $P_1(\cT)$ denote the piecewise affine functions in $L^\infty(\Omega)$ with respect to the triangulation $\cT$ so that the associated $P_1$ conforming finite element function spaces without and with (homogeneous) boundary conditions read
\begin{align*}
S^1(\cT):=P_1(\cT)\cap C(\bar\Omega)\text{ and } S^1_0(\cT):=\left\{v_C\in S^1(\cT):v_C=0 \text{ on }\partial\Omega\right\}.
\end{align*}
The interior nodes $\cN(\Omega)$ label the nodal basis functions $\varphi_z$ with patch {$\omega_z=\{\varphi_z>0\}={\rm int}( {\rm supp}\,\varphi_z)$} around $z\in\cN(\Omega)$.

Given some finite-dimensional finite element space $V_h$ with 
$S_0^1(\cT)\subseteq V_h\subset V\equiv H^1_0(\Omega)$, the discrete formulation 
of \eqref{WeakFrmSec} seeks $u_h\in V_h$ with 
\begin{equation}\label{weakdisSec}
a(u_h,v_h)+b(u_h,v_h)=F(v_h)\text{ for all }v_h\in V_h . 
\end{equation}

The arguments of \cite{SchatzWang96}  are rephrased in the following lemma
(proven in the appendix) 
that allows the application of Theorem~\ref{dis_inf_sup_thm}  in the subsequent 
theorem. 

\begin{lem}\label{SpecLem}
For any $\epsilon>0$ there exists some $\delta>0$ such that 
the solution $z\in V\equiv H^1_0(\Omega)$ to $a(z,\bullet)=g\in \lt\subset H^{-1}(\Omega)$ 
for  $g\in \lt$  satisfies, for all $\cT\in\bT(\delta)$, that 
\[
\min_{z_C\in S^1_0(\cT)} \| z-z_C\|_a
+ \min_{Q_0\in P_0(\cT;\bR^n)}\| A\nabla z - Q_0  \|
\leq\epsilon \|g\|.\]
\end{lem}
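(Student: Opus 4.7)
The plan is to derive the qualitative approximation bound from compactness of the solution operator $T:L^2(\Omega)\to(V,\|\bullet\|_a)$, $g\mapsto z$, combined with density of the finite element spaces in the NVB-refinement family $\bT$. This is the mechanism behind the Schatz-Wang arguments: compactness upgrades pointwise approximability into uniform approximability over the unit ball of $L^2(\Omega)$.

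First I would establish that $T$ is compact from $L^2(\Omega)$ into $(V,\|\bullet\|_a)$. Given a bounded sequence $g_k$ in $L^2(\Omega)$, extract a subsequence with $g_k\rightharpoonup g$ in $L^2(\Omega)$; boundedness of $T$ forces $Tg_k\rightharpoonup Tg$ in $V$, and the compact Rellich-Kondrachov embedding $V\hookrightarrow L^2(\Omega)$ yields $Tg_k\to Tg$ in $L^2$. Symmetry of $a$ together with the defining identity $a(Tg,v)=(g,v)_{L^2(\Omega)}$ for all $v\in V$ implies
\[
\|Tg_k-Tg\|_a^2=(g_k-g,Tg_k-Tg)_{L^2(\Omega)}\le \|g_k-g\|\,\|Tg_k-Tg\|\longrightarrow 0,
\]
so $Tg_k\to Tg$ in $V$. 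Since $A\in L^\infty(\Omega;\bS)$, the composite $g\mapsto A\nabla Tg$ is then compact from $L^2(\Omega)$ into $L^2(\Omega;\bR^n)$. In particular, the images $K_1:=T(\overline{B_{L^2}(0,1)})\subset V$ and $K_2:=A\nabla T(\overline{B_{L^2}(0,1)})\subset L^2(\Omega;\bR^n)$ are totally bounded.

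Second I would exploit density of $\bigcup_{\cT\in\bT}S_0^1(\cT)$ in $V$ and of $\bigcup_{\cT\in\bT}P_0(\cT;\bR^n)$ in $L^2(\Omega;\bR^n)$, which holds because uniform NVB-sweeps of $\cT_0$ drive $h_{\max}$ to $0$. The Scott-Zhang quasi-interpolation $J_h:V\to S_0^1(\cT)$ and the piecewise $L^2$-projection $\Pi_0$ are uniformly $\cT$-stable in $V$, resp.\ $L^2(\Omega;\bR^n)$, and satisfy $\|v-J_hv\|_a\to 0$ and $\|w-\Pi_0w\|\to 0$ as $h_{\max}(\cT)\to 0$ for each fixed $v\in V$ and $w\in L^2(\Omega;\bR^n)$. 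Covering $K_1$ and $K_2$ by finitely many small balls and invoking the triangle inequality together with the $\cT$-uniform stability of $J_h$ and $\Pi_0$ promotes the pointwise statements to uniform convergence over $K_1$ and $K_2$. Selecting $z_C=J_h z$ and $Q_0=\Pi_0(A\nabla z)$ and scaling by $\|g\|$ delivers the estimate.

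The main obstacle I anticipate lies in certifying the density of $\bigcup_{\cT\in\bT(\delta)}S_0^1(\cT)$ for arbitrarily small $\delta$ within the NVB-generated family and in fitting the $\cT$-uniform stability constants into a single threshold $\delta(\epsilon)$. An alternative and quantitative route avoids the soft compactness step altogether by invoking a Meyers-type regularity of Schatz-Wang: there exists $s\in(0,1]$, determined by $\underline\lambda,\overline\lambda,\Omega,n$, such that $z\in H^{1+s}(\Omega)$, $A\nabla z\in H^s(\Omega;\bR^n)$, and $\|z\|_{H^{1+s}(\Omega)}+\|A\nabla z\|_{H^s(\Omega;\bR^n)}\lesssim \|g\|$; the standard fractional interpolation estimates then furnish the explicit bound $\lesssim h_{\max}^s\|g\|$ for the left-hand side, so that the choice $\delta=(\epsilon/C)^{1/s}$ works.
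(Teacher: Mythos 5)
Your primary route — compactness of the solution operator $T:L^2(\Omega)\to(V,\|\bullet\|_a)$, then total boundedness of $T(\overline{B_{L^2}(0,1)})$ and of $\mathbf A\nabla T(\overline{B_{L^2}(0,1)})$, a finite $\epsilon$-net, and interpolation of the finitely many centres — is essentially the paper's argument; the paper just packages it slightly differently, taking $W:=A^{-1}(S)$ pre-compact in $V$ because $L^2(\Omega)\stackrel{c}{\hookrightarrow}H^{-1}(\Omega)$, and it approximates the net centres first by $\cD(\Omega)$ functions and then by their \emph{nodal} interpolants $I_C\zeta_j$ and piecewise constants $\Pi_0\Phi_k$, rather than by Scott-Zhang and $\Pi_0$ applied to $z$ itself. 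Approximating smooth representatives sidesteps the stability-constant bookkeeping you worry about: the constant in $\|\zeta_j-I_C\zeta_j\|_a\lesssim h_{\max}$ depends only on $\max_j\|D^2\zeta_j\|$ and shape regularity, and the threshold $\delta(\epsilon)$ comes out explicitly. Your compactness lemma for $T$, via $\|Tg_k-Tg\|_a^2=(g_k-g,Tg_k-Tg)_{L^2}$, is clean and correct.

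Your ``alternative and quantitative route'' does not work, and in fact contradicts a citation in the very subsection in which this lemma is used. For coefficients that are merely $L^\infty(\Omega;\bS)$ there is \emph{no} guaranteed $H^{1+s}(\Omega)$ regularity for any $s>0$, even with $f\in C^\infty(\Omega)$; the paper cites \cite[p.~20]{SchatzWang96} precisely for this statement. Meyers' theorem yields $W^{1,p}$ integrability of $\nabla z$ for some $p>2$ close to $2$, which is a gain in integrability, not in Sobolev differentiability, and it does not give $z\in H^{1+s}(\Omega)$ or $\mathbf A\nabla z\in H^s(\Omega;\bR^n)$. The whole point of Lemma~\ref{SpecLem} and the surrounding analysis is to prove a best-approximation result \emph{without} such regularity, which is exactly why the soft compactness argument is indispensable here; a quantitative rate of the form $h_{\max}^s\|g\|$ with a uniform $s>0$ is simply not available in this generality.
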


\begin{thm}
Adopt the aforementioned assumptions on $a$ and $b$  in 
\eqref{defn_a} -\eqref{defn_b} and suppose that \eqref{eq1} 
is well-posed in the sense that it allows for a unique solution $u$ 
for all right-hand sides $f\in L^2(\Omega)$. Then 
\begin{equation*}
0<\beta:=\inf_{\substack{x\in V\\ \| x_h\|_{a}=1}}
\sup_{\substack{y\in V \\ \| y\|_{a}=1}}(a+b)(x,y)
\end{equation*}
and for any positive  $\beta_0<\beta$, there exist $\delta>0$ such that
\begin{equation*}
\beta_0\leq\beta_h:=\inf_{\substack{x_h\in V_h\\ \| x_h\|_{a}=1}}
\sup_{\substack{y_h\in V_h \\ \| y_h\|_{a}=1}}(a+b)(x_h,y_h)
\end{equation*}
holds for all $S_0^1(\cT)\subset V_h:=X_h=Y_h\subset V$ with respect to $\cT\in\bT(\delta)$. Moreover,  the solution $u$ to \eqref{eq1} and $u_h$ to  \eqref{weakdisSec} satisfy 
\begin{align}\label{Kato_arg}
\| u-u_h\| _a\leq \frac{\|a+b\|}{\beta_0}\min_{v_h\in V_h}\| u-v_h\|_a.
\end{align}
\end{thm}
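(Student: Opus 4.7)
\medskip
\noindent\textbf{Proof proposal.} The plan is to verify the hypotheses of Theorem~\ref{dis_inf_sup_thm} and Corollary~\ref{rembestapproximation} in the conforming setting $\widehat{X}=X=\widehat{Y}=Y:=V$ (equipped with the scalar product $a$ and norm $\|\bullet\|_a$) and $X_h=Y_h:=V_h$, so that $\widehat{a}=a$, $\widehat{b}=b$, and, crucially, the $a$-normalisation forces $\widehat{\alpha}=\|a\|=\|A^{-1}\|=1$.

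First I would establish $\beta>0$. The operator $A\in L(V;V^*)$ is an isometric isomorphism, and the operator $B$ associated with $b$ is compact since the estimate $|b(u,v)|\lesssim \|u\|_{L^2(\Omega)}\|v\|_a$ (by \Holder and \Poincare inequalities) factors $B$ through the compact embedding $V\hookrightarrow L^2(\Omega)$. Consequently $A+B$ is Fredholm of index zero; the uniqueness contained in the assumed well-posedness of \eqref{WeakFrmSec} supplies injectivity, hence bijectivity with bounded inverse, equivalently $\beta>0$.

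Next I would verify \textbf{(H1)}--\textbf{(H4)} with the natural choices $Q:=\mathrm{id}_{V_h\hookrightarrow V}$ and $\cC:=\mathrm{id}_{V_h\hookrightarrow V}$, which render $\Lambda_4=0$ in \textbf{(H4)}, $\delta_1=0$ in \textbf{(H1)}, $\delta_3=0$ in \textbf{(H3)}, and therefore $\widehat{\beta}=\beta$ in \eqref{AsmpCondn}. Choosing $P:V\to V_h$ as the $a$-orthogonal (Ritz) projection, the $a$-symmetry together with Galerkin orthogonality (applied once to kill $x_h\in V_h$ and once on the test side) yield, with $\xi:=A^{-1}(b(x_h,\bullet))\in V$,
\[
\widehat{a}(x_h+\xi,\widehat{y}-P\widehat{y})=\widehat{a}(\widehat{y}-P\widehat{y},\xi-v_h)\le \|\widehat{y}-P\widehat{y}\|_a\,\|\xi-v_h\|_a\le \min_{v_h\in V_h}\|\xi-v_h\|_a
\]
for every $\widehat{y}\in V$ with $\|\widehat{y}\|_a=1$, whence $\delta_2\le \sup_{\|x_h\|_a=1}\min_{v_h\in V_h}\|A^{-1}Bx_h-v_h\|_a$.

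The principal obstacle is the uniform smallness $\delta_2\to 0$ as $h_{\max}\to 0$: this crucially exploits that $B$ is of lower order. Compactness of $A^{-1}B:V\to V$ makes $K:=\{A^{-1}Bx_h:\|x_h\|_a\le 1\}$ relatively compact in $V$; combining a finite $\epsilon/2$-net in $K$ with the $V$-approximability of each net element from $V_h$ (via Scott--Zhang interpolation, or equivalently Lemma~\ref{SpecLem} applied after an $L^2$-splitting of $b(x_h,\bullet)$) gives $\sup_{\xi\in K}\min_{v_h\in V_h}\|\xi-v_h\|_a\le\epsilon$ for every $\cT\in\bT(\delta)$ provided $\delta$ is chosen small enough. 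Given $\beta_0<\beta$, selecting $\delta$ so that $\delta_2<\beta-\beta_0$ and invoking Theorem~\ref{dis_inf_sup_thm} delivers $\beta_h\ge \widehat{\alpha}\widehat{\beta}-\delta_2\ge\beta_0$. Finally \eqref{Kato_arg} is immediate from Corollary~\ref{rembestapproximation}: for the conforming scheme the residual $F_h(y_h)-(\widehat{a}+\widehat{b})(u,y_h)=F(y_h)-(a+b)(u,y_h)$ vanishes on $V_h\subset V$, so $M=\|a+b\|$ combined with $\beta_h\ge\beta_0$ yields the stated quasi-optimal estimate.
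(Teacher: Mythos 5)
The proposal is correct, but it diverges from the paper's proof at the decisive technical point, and the comparison is worth making explicit.

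The paper does \emph{not} verify \textbf{(H2)} for the pair $(a,b)$ directly; it first observes that the discrete inf-sup constant for $a+b$ on $V_h\times V_h$ coincides with that of the transposed form $a+b^*$, and then applies Theorem~\ref{dis_inf_sup_thm} to $(a,b^*)$. The reason is that for the auxiliary function $z=A^{-1}(b^*(x_h,\bullet))$ the right-hand side $b^*(x_h,\varphi)=b(\varphi,x_h)=\int_\Omega g\varphi\,dx$ with $g:=\mathbf{b}\cdot\nabla x_h+\gamma x_h\in L^2(\Omega)$ is a genuine $L^2$ datum, which is exactly what Lemma~\ref{SpecLem} requires. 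Without the transposition, $a(\xi,\bullet)=b(x_h,\bullet)$ has a right-hand side involving $\nabla$ of the test function, and Lemma~\ref{SpecLem} is not directly applicable. Your side remark about ``Lemma~\ref{SpecLem} applied after an $L^2$-splitting of $b(x_h,\bullet)$'' glosses over precisely this difficulty and, as written, does not work; the paper's $b^*$ trick is the mechanism that makes the $L^2$ reformulation legitimate.

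That said, your \emph{primary} route for $\delta_2\to 0$ is a different and valid argument that sidesteps the transposition altogether: since $b$ is bounded via $|b(u,v)|\lesssim\|u\|_{L^2}\|v\|_a$, the operator $B$ factors through the compact embedding $V\hookrightarrow L^2(\Omega)$ and hence $A^{-1}B\in L(V;V)$ is compact; the set $K=A^{-1}B(\overline{B_1})$ is therefore relatively compact in $V$, and a finite $\epsilon$-net followed by density of $\bigcup_{\cT}S_0^1(\cT)$ in $V$ (Scott--Zhang or mollify-then-interpolate) yields $\sup_{\xi\in K}\min_{v_h\in V_h}\|\xi-v_h\|_a\le\epsilon$ for all $\cT\in\bT(\delta)$ with $\delta$ small. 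This is in effect a re-proof of Lemma~\ref{SpecLem} for the set $K$ rather than for $W=A^{-1}(S)$, and it buys you the simplification of never having to transpose $b$; what it gives up is the reuse of the pre-established Lemma~\ref{SpecLem}, which the paper needs anyway for the nonconforming Crouzeix--Raviart analysis that follows, so the paper's choice is the more economical one in context. Your Fredholm derivation of $\beta>0$ is also a slightly longer route than the paper's one-line appeal to ``invertibility $\Leftrightarrow$ inf-sup'', but it is arguably more careful, since the stated hypothesis only asserts well-posedness for right-hand sides in $L^2(\Omega)\subsetneq V^*$ and the Fredholm alternative is the clean way to upgrade that to invertibility on all of $V^*$. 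The remaining steps (choice of $Q=\cC=\mathrm{id}$, Ritz projection $P$, vanishing of $\delta_1,\delta_3,\Lambda_4$, $\widehat\beta=\beta$, and the passage to~\eqref{Kato_arg} via Corollary~\ref{rembestapproximation} with a vanishing consistency term) match the paper.
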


\begin{proof}
The invertibility of a linear operator from one Banach space into the dual 
of another is equivalent to an $\inf$-$\sup$ condition 
\cite{MR3097958,MR2373954,Braess,DiPetroErn12}; in particular, 
the well-posedness of the theorem implies $\beta>0$. The remaining assertions 
follow from Theorem~\ref{dis_inf_sup_thm} with $\widehat{a}=a$, $\widehat{b}=b$, 
$S_0^1(\cT)\subset V_h=X_h=Y_h \subset 
X=Y=V=H^1_0(\Omega) $ endowed with the norm $\| \bullet \|_a$. 
Then  $\alpha=\widehat{\alpha}=1=\| \widehat{a}\| $ and $\beta$ is the constant in  \eqref{cts_infsup}.

To conclude the discrete inf-sup condition, it is sufficient to verify that the parameters involved in {\bf (H1)}-{\bf (H4)} can be chosen such that the discrete inf-sup constant in Theorem~\ref{dis_inf_sup_thm} is positive.  Moreover, the discrete inf-sup constants of $a+b$ are equal to those of the dual problem $a+ b^*$ with $b^*(u,v)=b(v,u)$. Therefore,
Theorem~\ref{dis_inf_sup_thm} is applied to $a$ and $b^*$ (rather than $a$ and $b$).

 Let $Q$ and $\cC$ be the identity, while $P\in L(V;V_h)$ denotes
  the Galerkin projection onto $V_h$ with respect to $a$, i.e. $a(v-Pv,\bullet)=0$ in $V_h$ for all $v\in V$. Then {the parameters in}  {\bf (H1)}, {\bf (H3)}, and {\bf (H4)} are $\delta_1=\delta_3=\Lambda_4=0$. 
The {choice} of {the parameter $\delta_2$ in} {\bf (H2)} concerns $v\in V$ and 
$u_h\in V_h$ with $\| v\|_a=1=\| u_h\|_a$ and the solution 
$z:=A^{-1}(b^*(u_h,\bullet))\in V$ to $a(z,\bullet)=b(\bullet, u_h)$. 
Notice that $g:={\bf b}\cdot\nabla u_h+\gamma u_h\in\lt$ satisfies
 \[
 b(\varphi, u_h)=\int_{\Omega}\left({\bf b}\cdot\nabla u_h+\gamma u_h\right)\varphi\dx=\integ g\varphi\dx  \quad {\rm for \; all \;} \varphi \in V
 \]
and (with the Friedrichs constant $C_{F} $ for $\|\bullet\|  
\le \underline{\lambda}^{-1/2}C_F\| \bullet\|_a$ in $V$)
\begin{align}
\|g\|\leq \| u_h\|_a \,\|{\bf A}^{-1/2} {\bf b}\|_{\infty}+\|\gamma\|_{\infty}\|u_h\|
\leq   (\|{\bf b}\|_{\infty}+C_F\|\gamma\|_{\infty})\underline{\lambda}^{-1/2}
=: C
.\label{Frid}
\end{align}
The Galerkin orthogonality with  $P$, the definition of $z$,  and a Cauchy 
inequality with $\| v\|_a=1$ in the end show
\[
a\left(u_h+A^{-1}(b^*(u_h,\bullet)),v- P v\right)=a(z,v- P v)=a(z- P z,v) \le \|z-Pz\|_a.
\]
Given any $\epsilon>0$, Lemma~\ref{SpecLem} leads to  {$\delta >0$} such that
for all $\cT\in\bT(\delta)$ there exists some  $z_C \in S_0^1(\cT)$  with 
\[
  \|z-Pz\|_a\le   \| z-z_C\|_a \leq \epsilon \|g\|\le  \epsilon C
\]
with  \eqref{Frid}  in the last step. The combination of the previous inequalities 
proves  {\bf (H2)} with  ${\delta_2} := \epsilon C$.

Theorem~\ref{dis_inf_sup_thm} applies with $\beta=\widehat{\beta}$ and
$\beta_h\ge \beta- \epsilon C$. This proves the assertion  on $\beta_h\ge\beta_0$
for sufficiently small $\epsilon$ and $\delta$. 

The quasi-optimal convergence \eqref{Kato_arg} follows from  
Corollary~\ref{rembestapproximation} without the second term in the conforming
discretisation.
\end{proof}

\begin{rem}
The proof requires that the discrete space $V_h$ solely satisfies $S_0^1(\cT)\subset V_h\subset H^1_0(\Omega)$ and so allows for conforming $hp$ finite element  spaces. 
The condition $\cT\in\bT(\delta)$ allows for local mesh-refining as long as
max $h_0$ is sufficiently small.
\end{rem}

\subsubsection{Nonconforming FEM}
This subsection establishes the {\it first} best-approximation-type a~priori 
error estimate for the  lowest-order nonconforming
FEM in any space dimension $\ge 2$ under the 
assumptions on the coefficients of Subsubsection~\ref{subsectMathematicalmodelinSection2}
as an application of Theorem~\ref{dis_inf_sup_thm}.
This  generalises  \cite[Thm 3.3]{CCADNNAKP15} from piecewise Lipschitz
continuous to $L^\infty$ coefficients. 

The nonconforming Crouzeix-Raviart (CR) finite element spaces read
\begin{eqnarray*}
&& CR^1(\mathcal T): =\{v \in P_1(\mathcal{T}): \forall E \in \mathcal E,~ v~ \text{ is continuous at mid($E$) }\},\\
&& CR^1_0(\mathcal T):=\{v \in CR^1(\mathcal T):v(\text{mid} (E))
=0 ~~\text{for all} ~E \in \mathcal E ({\partial\Omega}) \}.
\end{eqnarray*}
Here $\text{mid} (E)$ denotes the  mid operator for a simplex obtained by taking the arithmetic mean of all vertices.
The CR finite element spaces give rise to the  bilinear forms 
 $a_{\text{pw}},b_{\text{pw}}:CR_0^1(\cT)\times CR_0^1(\cT)\to \bR$ defined, for all $u_{\text{CR}},v_{\text{CR}}\in CR_0^1(\cT)$, by
\begin{align}
&a_{\text{pw}}(u_{\text{CR}},v_{\text{CR}}):=\sum_{T\in\cT}\int_T(\mathbf A \nabla u_{\text{CR}})\cdot\nabla v_{\text{CR}}\dx,\\
&b_{\text{pw}}(u_{\text{CR}},v_{\text{CR}}):=\sum_{T\in\cT}\int_T\left(u_{\text{CR}} {\mathbf b}\cdot\nabla v_{\text{CR}}+\gamma u_{\text{CR}}v_{\text{CR}}\right)\dx.
\end{align}
The nonconforming FEM seeks the discrete solution $u_{\text{CR}}\in CR_0^1(\cT)$ to
\begin{align}\label{dis_weak_Se}
a_{\text{pw}}(u_{\text{CR}},v_{\text{CR}})+b_{\text{pw}}(u_{\text{CR}},v_{\text{CR}})=F(v_{\text{CR}})\fl v_{\text{CR}}\in CR_0^1(\cT).
\end{align}
Notice that $\|\nabla_{\text{pw}} \bullet \|$ with the piecewise action 
$\nabla_{\text{pw}}$ of the gradient $\nabla $ is a norm on $CR_0^1(\cT)$ and so is 
$\trinl \bullet\trinr_{\text{pw}}:=\|  {\bf A}^{1/2} \nabla_{\text{pw}}\bullet\|$.
The subsequent theorem implies the unique solvability and boundedness of discrete solutions for sufficiently fine meshes. 

\begin{thm}\label{dis_inf_sup_Se}
Suppose that $\cL$ is a bijection and so $\cL^{-1}$ is bounded and 
\eqref{cts_infsup} 
holds with $\beta=\|\cL^{-1}\|>0$. Then 
there exist positive $\delta$ and $\beta_0$  such that any $\cT\in\bT(\delta)$ satisfies
\begin{align}\label{eqnewlabelccforinfsupbeta0}
\beta_{0}\leq \beta_h:=\inf_{\substack{w_{{\rm CR}}\in CR_0^1(\cT)\\ \trinl w_{{\rm CR}}\trinr_{{\rm pw}}=1}} \sup_{\substack{ v_{{\rm CR}}\in CR_0^1(\cT)\\ 
\trinl v_{{\rm CR}}\trinr_{{\rm pw}}=1}}(a_{{\rm pw}}+b_{{\rm pw}})(w_{{\rm CR}},v_{{\rm CR}}).
\end{align}
\end{thm}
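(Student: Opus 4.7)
The plan is to apply Theorem~\ref{dis_inf_sup_thm} to the Crouzeix--Raviart setting in analogy with the conforming case of Theorem~2.2. Set $\widehat{X} = \widehat{Y} := H^1_0(\Omega) + CR^1_0(\cT)$ equipped with the piecewise energy norm $\trinl\bullet\trinr_{\text{pw}} := \|\mathbf{A}^{1/2}\nabla_{\text{pw}}\bullet\|$, and take $\widehat{a} := a_{\text{pw}}$ and $\widehat{b} := b_{\text{pw}}$ as the piecewise extensions of $a$ and $b$. Then $\widehat{\alpha} = 1 = \|\widehat{a}\|$ because $a_{\text{pw}}$ is precisely the scalar product inducing $\trinl\bullet\trinr_{\text{pw}}$, while the bijectivity of $\cL$ yields $\beta > 0$ in~\eqref{cts_infsup}. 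As in Theorem~2.2, apply the theorem to the pair $(a, b^*)$ with $b^*(u,v) := b(v,u)$ so that the dual problem has an $L^2$-source.

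Choose the intermediate operators as follows. Let $J : CR^1_0(\cT) \to H^1_0(\Omega)$ be a conforming companion operator of medius-analysis type (cf.~\cite{Gudi10,CCDGJH14}) with the three properties (i) $\trinl v_h - Jv_h\trinr_{\text{pw}} \lesssim \mathrm{dist}_{\trinl\bullet\trinr_{\text{pw}}}(v_h, V)$, (ii) $\|v_h - Jv_h\| \lesssim h_{\max}\trinl v_h\trinr_{\text{pw}}$, and (iii) the elementwise mean-gradient preservation $\int_T \nabla_{\text{pw}}(v_h - Jv_h)\,dx = 0$ for every $T\in\cT$. Set $Q := \cC := J$. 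Take $P$ to be the $a_{\text{pw}}$-Galerkin projection onto $Y_h = CR^1_0(\cT)$, defined by $a_{\text{pw}}(Pu, y_h) = a_{\text{pw}}(u, y_h)$ for all $y_h \in Y_h$; it is stable with $\trinl Pu\trinr_{\text{pw}} \le \trinl u\trinr_{\text{pw}}$ and enforces the Galerkin orthogonality $a_{\text{pw}}(y_h, u - Pu) = 0$ for every $y_h \in Y_h$. With these choices (H4) is just property~(i) and gives $\Lambda_4 \lesssim 1$ independent of the mesh.

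Parameters $\delta_1$ and $\delta_3$ are then small. For $\delta_3$, combine property~(ii) with the $L^\infty$-bounds on $\mathbf{b}$ and $\gamma$ and a Friedrichs inequality as in~\eqref{Frid} to conclude $\delta_3 \lesssim h_{\max}$. For $\delta_1$, introduce the dual solution $z := A^{-1}(b^*(x_h,\bullet)|_V) \in V$ solving $a(z, \bullet) = \int_\Omega g\bullet\,dx$ with $g := \mathbf{b}\cdot\nabla x_h + \gamma x_h$ of norm $\|g\| \le C$ as in~\eqref{Frid}. Property~(iii) permits subtracting any $Q_0 \in P_0(\cT;\bR^n)$ from $\mathbf{A}\nabla z$ in the element integrals, and minimising $Q_0$ followed by Lemma~\ref{SpecLem} together with the bound $\trinl y_h - Jy_h\trinr_{\text{pw}} \lesssim \trinl y_h\trinr_{\text{pw}}$ gives $\delta_1 \lesssim \epsilon\|g\| \le \epsilon C$ for any prescribed $\epsilon > 0$ once $h_{\max}$ is small enough.

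The main step is (H2), and it is exactly to handle (H2) that $P$ was taken to be the Galerkin projection. The identity $a_{\text{pw}}(x_h, u - Pu) = 0$ for $x_h \in Y_h$ removes the otherwise troublesome $x_h$-contribution, so
\[
\widehat{a}(x_h + z, \widehat{y} - P\widehat{y}) = a_{\text{pw}}(z, \widehat{y} - P\widehat{y}) = a_{\text{pw}}(z - y_h, \widehat{y} - P\widehat{y})
\]
for every $y_h \in Y_h$ by another application of Galerkin orthogonality. Cauchy--Schwarz combined with $\trinl \widehat{y} - P\widehat{y}\trinr_{\text{pw}} \le \trinl \widehat{y}\trinr_{\text{pw}} = 1$ bounds the right-hand side by $\trinl z - y_h\trinr_{\text{pw}}$; infimising over $y_h \in Y_h \supset S_0^1(\cT)$ and invoking Lemma~\ref{SpecLem} applied to $g \in L^2(\Omega)$ yields $\delta_2 \le \epsilon C$. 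Choosing $\epsilon$ (hence $\delta$) small enough, $\delta_1 + \delta_2 + \delta_3 < \widehat{\alpha}\widehat{\beta}$, and Theorem~\ref{dis_inf_sup_thm} produces a mesh-independent positive $\beta_0 \le \beta_h$ for every $\cT \in \bT(\delta)$. The main technical burden thus shifts from (H2) itself, which reduces to a clean application of Lemma~\ref{SpecLem}, to the construction of a companion operator enjoying property~(iii); this is available by standard post-processing of any medius-analysis enrichment.
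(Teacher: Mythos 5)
Your proposal is correct and follows essentially the same route as the paper's proof: same choice of $\widehat{X}$, $\widehat{Y}$, $\widehat{a}=a_{\text{pw}}$, $\widehat{b}=b_{\text{pw}}$, the switch to the adjoint pair $(a,b^*)$, the companion operator $J$ for $Q=\cC$ (with the integral-mean gradient orthogonality enabling $(1-\Pi_0)$ for {\bf (H1)}), the $a_{\text{pw}}$-Galerkin/best-approximation projection onto $CR^1_0(\cT)$ for $P$ (so that the $x_h$-contribution drops out in {\bf (H2)}), and Lemma~\ref{SpecLem} applied to the dual source $g\in L^2(\Omega)$ to make $\delta_1,\delta_2$ small. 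The only cosmetic difference is the slightly smaller extended space $V+CR^1_0(\cT)$ in place of the paper's integral-mean-zero-jump space, which is immaterial here.
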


\begin{proof}
Let $\displaystyle H^1(\cT):=\left\{v\in L^2(\Omega)\,|\,\forall\,T\in\cT,\, v|_{T}\in H^1(T)\right\}$ and endow the vector space
$$\widehat{V}:=\widehat{X}:=\widehat{Y}:=\left\{\widehat{v}\in H^1(\cT)\,|\,\forall\,E\in\cE, \int_E[\widehat{v}]_E\ds=0\right\}\supset V+CR_0^1(\cT)$$
with the norm
$\trinl\bullet\trinr_{\text{pw}}:=\|{\bf A}^{1/2}\nabla_{\text{pw}}\bullet\|$. 
Here and throughout the paper, the jump of $\widehat{v}\in \widehat{V}$ 
across any interior face 
$E=\partial K_+\cap \partial K_-\in \cE(\Omega)$ shared by 
two simplices $K_+$ and $K_-$  reads
\begin{align*}
[\widehat{v}]_E:=
\widehat{v}|_{K_+}-\widehat{v}|_{K_-}\quad \text{on } E=\partial K_+\cap \partial K_-
\end{align*}
(then  $\omega_E:=\text{int}( K_+\cup K_-)$),  
while $[\widehat{v}]_E:=\widehat{v}|_E$ along any boundary face 
$E\in \cE(\partial\Omega)$ according to the homogeneous boundary condition on 
$\partial\Omega$ (and then 
$\omega_E:=\text{int}(K)$ for $K\in\cT$ with $E\in \cE(K)$).

\noindent The boundedness of $\widehat{a}+\widehat{b}$ follows from a piecewise Friedrichs inequality
\[
\|  \hat v \| \le C_\text{pwF} \left( \sum _{E\in\cE}  |  \omega_E|^{-1} \,  |\int_E [\hat v]ds |^2  
+ \| \nabla_{\text{pw}} \hat v \|^2 \right)^{1/2}
\]
known for all $\hat v\in \widehat{V}$ with the volume $|\omega_E|$ of the side-patch
$|\omega_E|\approx h_E^n$. For  $\hat v\in \widehat{V}$ and   $E\in\cE$, 
the integral  $\int_E [\hat v]ds=0$ vanishes; hence the piecewise Friedrichs 
inequality reduces to $\|  \hat v \| \le C_\text{pwF} \| \nabla_{\text{pw}} \hat v \|$.
This enables a proof that $(\widehat{V},\widehat{a})$ is a Hilbert space that 
$\widehat{b}$ is a  bounded bilinear form with respect to those norms. Consequently,
$\widehat{\alpha}=1=\|\widehat{a}\|$ and 
 \eqref{cts_infsup}  holds with some $\beta=\|\cL^{-1}\|>0$.

Define the nonconforming 
interpolation operator  $I_{\text{CR}}\in L(\widehat{V};CR_0^1(\cT))$   by 
\begin{align}\label{integ_mean}
I_{\text{CR}}v &:= \sum_{E \in {\mathcal E}}  \left(\fint_E v~ds\right) \psi_E 
\quad\text{for all }v \in \widehat{V}
\end{align}
with the side-oriented basis functions $\psi_E$ of $CR_0^1(\cT)$ with  
$\psi_E({\rm mid}(F))=\delta_{EF}$, the Kronecker symbol,   for all sides
$E,F \in {\mathcal E}.$
 For any $v_{\text{CR}}\in CR_0^1(\cT)$, the conforming companion operator $Q:=\cC:= J \in L\left(CR_0^1(\cT);V\right) $ with  $J  v_{\text{CR}}\in P_4(\cT)\cap C^0(\bar\Omega)$ from \cite[p. 1065]{CCDGMS15} satisfies (a) that $w:=v_{\text{CR}}-J  v_{\text{CR}}\perp P_1(\cT)$ 
 is $L^2$ orthogonal to the space $P_1(\cT)$ of piecewise first-order polynomials, 
 (b)  the integral mean property of the gradient
\begin{equation}\label{integ_mean_ortho}
\Pi_0\left(\nabla_{\text{pw}}(v_{\text{CR}}-J v_{\text{CR}})\right)=0,
\end{equation}
and  (c) the approximation and stability property  (with a universal constant 
$\Lambda_{\text{CR}}$) 
\begin{equation}\label{J4_enrich}
\|h_{\cT}^{-1}(v_{\text{CR}}-J v_{\text{CR}})\|  
+\| \nabla_{\text{pw}}(  v_{\text{CR}}-J v_{\text{CR}}) \|  \le
\Lambda_{\text{CR}}  \min_{v\in H^1_0(\Omega)}\|\nabla_{\text{pw}} ( v_{\text{CR}}-v) \|.
\end{equation}
(The proofs in \cite{CCDGMS15} are in 2D, but can be generalised to any dimension). Note that $J $ is a right inverse to $I_{\text{CR}}$ in the sense that $I_{\text{CR}}J  v_{\text{CR}}=v_{\text{CR}}$ holds for all $v_{\text{CR}}\in CR_0^1(\cT)$. The inequality \eqref{J4_enrich} implies {\bf (H4)}
with $\Lambda_4=(\overline{\lambda}/\underline{\lambda})^{1/2}\Lambda_{\text{CR}}$.

The bilinear forms  
$\widehat{a}\equiv a_{\text{pw}}, \widehat{b}\equiv b_{\text{pw}}:\widehat{V}\times \widehat{V}\to\bR$
read, for all $\widehat{u},\widehat{v}\in\widehat{V}$, as 
\begin{equation} \label{ab}
\widehat{a}(\widehat{u},\widehat{v}):=\sum_{T\in\cT}\int_T(\mathbf A \nabla \widehat{u})\cdot\nabla \widehat{v}\dx\quad \text{and} \quad
\widehat{b}(\widehat{u},\widehat{v}):=\sum_{T\in\cT}\int_T(\widehat{u} {\mathbf b}\cdot\nabla\widehat{v} +\gamma \widehat{u}\widehat{v})\dx.
\end{equation}
As in the stability proof of the conforming FEM, Theorem~\ref{dis_inf_sup_thm} applies
to $\widehat{a}$ and  $\widehat{b}^*$ (rather than to $\widehat{a}$ and  $\widehat{b}$).

The proof of {\bf (H1)} concerns  $u_{\text{CR}},v_{\text{CR}}\in CR_0^1(\cT)$ with 
$\trinl u_{\text{CR}}\trinr_{\text{pw}}=1=\trinl v_{\text{CR}}~\trinr_{\text{pw}}$ and  the solution 
$z=A^{-1}(\widehat{b}(\bullet, u_{\text{CR}}) |_{V})\in V$ to 
$a(z,\bullet)= \widehat{b}(\bullet, u_{\text{CR}})$ in $V$. The right-hand side is the $L^2$
scalar product of the test function in $V$ with
$g:={\bf b}\cdot\nabla_{\text{pw}} u_{\text{CR}}+\gamma u_{\text{CR}} \in\lt$ bounded with 
the discrete Friedrichs inequality $\|\bullet\| \leq C_{dF}\|\nabla_{\text{pw}}\bullet\|$
in $CR_0^1(\cT)$ \cite[p. 301]{Brenner} by
 $\|g\|\leq (\|{\bf b}\|_{\infty} +C_{dF}  
 \|{\gamma}\|_{\infty})\underline{\lambda}^{-1/2}=: C_0$.
 Since $\nabla_{\text{pw}} w\perp  P_0(\cT;\bR^n)$  in $L^2(\Omega;\bR^n)$ for 
 $w:= v_{\text{CR}}-Jv_{\text{CR}}$, 
 Lemma~\ref{SpecLem} applies for any $\epsilon>0$ 
 and leads to $\delta>0$ so that, for $\cT\in\bT(\delta)$ 
 with the $L^2$ projection $\Pi_0$,   
\begin{align}
&\widehat{a}\left(A^{-1}\left(\widehat{b}^*(u_{\text{CR}},\bullet)|_V\right),v_{\text{CR}}-J v_{\text{CR}}\right)=
a_{\text{pw}}(z,w)  =\int_{\Omega} ((1-\Pi_0){\bf A}\nabla z )\cdot\nabla_{\text{pw}}w \dx  \notag\\
&\quad\leq  \| (1-\Pi_0){\bf A}\nabla z \|\,\| \nabla_{\text{pw}}w \|   
\le \epsilon \|g \|  \Lambda_{\text{CR}} \underline{\lambda}^{-1/2} \le C_1\epsilon =:\delta_1
\label{H1assp}
\end{align}
with \eqref{J4_enrich} for $v=0$ and $\| \nabla_{\text{pw}} v_{\text{CR}}\|\le  
\underline{\lambda}^{-1/2}  \trinl v_{\text{CR}} \trinr_{\text{pw}} =\underline{\lambda}^{-1/2}$ 
in the end for $C_1:=  C_0  \Lambda_{\text{CR}} \underline{\lambda}^{-1/2}$. This 
concludes the proof of  {\bf (H1)}. 

The proof of {\bf (H2)} concerns  $u_{\text{CR}}\in CR_0^1(\cT)$, 
 $\widehat{v}\in\widehat{V}$ with $\trinl u_{\text{CR}}\trinr_{\text{pw}}=1
 =\trinl \widehat{v} \trinr_{\text{pw}}$, 
 and   the solution  $z\in V$ to $a(z,\bullet)= \widehat{b}(\bullet, u_{\text{CR}})$ in $V$ as before.
The operator $P:\widehat{V}\to CR^1_0(\cT)$, however, is not $I_{\text{CR}}$ because 
the oscillating coefficients ${\bf A}$ prevent the immediate cancellation property for
$\widehat{a}(u_{\text{CR}},\widehat{v}-P\widehat{v} )=0$. The latter  is a consequence of 
the best-approximation $P$ in the Hilbert space $\widehat{V}$ onto its linear and closed subspace  $ CR_0^1(\cT)$;   so let   $P\widehat{v}\in CR_0^1(\cT) $ be the unique
minimiser in 
\[
 \trinl \widehat{v} -P\widehat{v} \trinr_{\text{pw}}=\min_{v_{\text{CR}}\in CR_0^1(\cT)}
 \trinl \widehat{v} - v_{\text{CR}}\trinr_{\text{pw}}\le  \trinl \widehat{v}  \trinr_{\text{pw}} =1.
\]
Lemma~\ref{SpecLem} applies for any $\epsilon>0$ and leads to $\delta>0$ so that, for 
each $\cT\in\bT(\delta)$, there exists some $z_C\in S^1_0(\cT)\subset CR_0^1(\cT)$ 
with $  \trinl z-z_C\trinr_{\text{pw}}  \le \epsilon C_0$. This,  
$\widehat{a}(u_{\text{CR}}+z_C,\widehat{v}-P\widehat{v} )=0$,  and 
$ \trinl \widehat{v}- P \widehat{v}  \trinr_{\text{pw}} \le 1$ in the end 
 provide  {\bf (H2)} (with $\widehat{b}^*$ replacing $\widehat{b}$):
\begin{align*}
&\widehat{a}(u_{\text{CR}}+ A^{-1}\left(\widehat{b}^*(u_{\text{CR}},\bullet)|_V\right), 
\widehat{v}-P\widehat{v}) \\
&=\int_{\Omega}({\bf A}\nabla_{\text{pw}}(z-z_C)\cdot\nabla_{\text{pw}}(\widehat{v}-P\widehat{v})\dx\notag\\
&\leq 
\trinl z-z_C\trinr_{\text{pw}} \trinl \widehat{v}- P \widehat{v}  \trinr_{\text{pw}} 
\le    C_0 \epsilon  =:\delta_2
\end{align*}
The proof of {\bf (H3)} concerns $u_{\text{CR}},v_{\text{CR}}\in CR_0^1(\cT)$ with $\trinl u_{\text{CR}}\trinr_{\text{pw}}=1=\trinl v_{\text{CR}}\trinr_{\text{pw}}$ and $w:= v_{\text{CR}}- J  v_{\text{CR}}$. 
This and  \eqref{J4_enrich} (with $v=0$) 
prove
\begin{align*}
&\widehat{b}^*\left(u_{\text{CR}},v_{\text{CR}}- J  v_{\text{CR}}\right)
=\int_{\Omega}\left({\bf b}\cdot\nabla_{\text{pw}}u_{\text{CR}}+\gamma u_{\text{CR}}\right) w\dx \\
&= \int_{\Omega} g w\dx \le  h_{\max} \|g\|\Lambda_{\text{CR}} \|\nabla_{\text{pw}} v_{\text{CR}}\|
\le C_0\Lambda_{\text{CR}}  \underline{\lambda}^{-1/2}\delta.
\end{align*}
Without loss of generality, assume $\delta\le\epsilon$. Then 
 {\bf (H3)} follows with  
 $\delta_3:= C_3\epsilon$ for 
 $C_3:=C_0\Lambda_{\text{CR}}  \underline{\lambda}^{-1/2}$.
 (It is remarkable that  in the last inequalities,  the extra property 
 $w:=v_{\text{CR}}-J  v_{\text{CR}}\perp P_1(\cT)$  leads to the bound 
 $  \underline{\lambda}^{-1/2}\Lambda_{\text{CR}}\text{osc}_1(g,\cT)$,
 but  that can easily be exploited solely for piecewise smooth 
 or at least piecewise continuous ${\bf b}$ and $\gamma$).
  
 Since  {\bf (H1)}-{\bf (H4)} hold for $\widehat{a}$ and  $\widehat{b}^*$,  
 Theorem~\ref{dis_inf_sup_thm} proves 
 $\beta_h\ge \widehat{\beta} - (C_1+C_2+C_3)\epsilon$
 with  positive $\widehat{\beta}<\beta$ defined in   \eqref{AsmpCondn}.
Any positive  $\epsilon<  \widehat{\beta}/(C_1+C_2+C_3) $ concludes the proof;
in fact, any  constant $\beta_0$ with $0<\beta_0< \widehat{\beta}$ can be realised
in \eqref{eqnewlabelccforinfsupbeta0}
by small $\delta>0$.
\end{proof}

The following best-approximation-type error estimate generalises a result
in  \cite{CCADNNAKP15}.

\begin{thm}\label{SA_err_est}
Let $u\in H^1_0(\Omega)$ solve \eqref{WeakFrmSec} and set $p:={\bf A}\nabla u+u{\bf b}\in H({\rm div},\Omega)$. There exists $\delta>0$ such that for all $\cT\in\bT(\delta)$, the discrete problem \eqref{dis_weak_Se} has a unique solution $u_{{\rm CR}}\in CR_0^1(\cT)$ and $u, \: u_{{\rm CR}}, \:p$ and its piecewise integral mean $\Pi_0 p$ satisfy
\begin{align}
\trinl u-u_{{\rm CR}}\trinr_{{\rm pw}}&\lesssim \trinl u-I_{{\rm CR}}u\trinr_{{\rm pw}}+\|p-\Pi_0p\|+{\rm {\rm osc}}_1(f-\gamma u,\cT).
\end{align}
\end{thm}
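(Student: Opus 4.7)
\medskip
\noindent\textbf{Proof proposal.} The plan is to combine Theorem~\ref{dis_inf_sup_Se} with Corollary~\ref{rembestapproximation} and then to estimate the resulting nonconformity residual by a careful splitting of the test function via the conforming companion $J$ from \eqref{J4_enrich}. First, Theorem~\ref{dis_inf_sup_Se} provides $\delta>0$ and $\beta_0>0$ so that for all $\cT\in\bT(\delta)$ the discrete bilinear form $a_{\text{pw}}+b_{\text{pw}}$ is inf-sup stable on $CR_0^1(\cT)$, which immediately gives unique existence of $u_{\text{CR}}$ and permits the application of Corollary~\ref{rembestapproximation} with $\widehat{V}=\widehat{X}=\widehat{Y}$ as in the proof of Theorem~\ref{dis_inf_sup_Se}.

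The best-approximation term in Corollary~\ref{rembestapproximation} is immediately controlled by $\trinl u-I_{\text{CR}}u\trinr_{\text{pw}}$ upon choosing $x_h=I_{\text{CR}}u$. The remaining task is to bound the nonconformity residual $F(v_{\text{CR}})-(\widehat{a}+\widehat{b})(u,v_{\text{CR}})$ for arbitrary $v_{\text{CR}}\in CR_0^1(\cT)$ with $\trinl v_{\text{CR}}\trinr_{\text{pw}}=1$. I would split $v_{\text{CR}}=Jv_{\text{CR}}+w$ with $w:=v_{\text{CR}}-Jv_{\text{CR}}$, where $Jv_{\text{CR}}\in H^1_0(\Omega)$ is the conforming companion from \eqref{J4_enrich}. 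Since $u$ solves \eqref{WeakFrmSec}, the contribution involving $Jv_{\text{CR}}\in V$ vanishes, so only the remainder piece survives:
\begin{equation*}
F(v_{\text{CR}})-(\widehat{a}+\widehat{b})(u,v_{\text{CR}})=\int_\Omega (f-\gamma u)\,w\dx-\sum_{T\in\cT}\int_T p\cdot\nabla_{\text{pw}} w\dx.
\end{equation*}

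For the flux term I would insert $\Pi_0 p$ for free using $\Pi_0(\nabla_{\text{pw}}w)=0$ from \eqref{integ_mean_ortho}, so that $\sum_T\int_T p\cdot\nabla_{\text{pw}}w\dx=\sum_T\int_T (p-\Pi_0 p)\cdot\nabla_{\text{pw}}w\dx$ is bounded by $\|p-\Pi_0 p\|\,\|\nabla_{\text{pw}}w\|$ via Cauchy-Schwarz. For the load term I would exploit the $L^2$-orthogonality $w\perp P_1(\cT)$ (property (a) of $J$) to insert $\Pi_1$ for free: $\int_\Omega(f-\gamma u)w\dx=\int_\Omega(1-\Pi_1)(f-\gamma u)\,w\dx$, then factor out $h_\cT$ to produce $\mathrm{osc}_1(f-\gamma u,\cT)\,\|h_\cT^{-1}w\|$. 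Finally, the stability estimate \eqref{J4_enrich} applied with $v=0$ controls both $\|\nabla_{\text{pw}}w\|$ and $\|h_\cT^{-1}w\|$ by $\Lambda_{\text{CR}}\|\nabla_{\text{pw}}v_{\text{CR}}\|\lesssim\trinl v_{\text{CR}}\trinr_{\text{pw}}=1$ via the norm equivalence \eqref{lambda_bdd}. Collecting terms and dividing through by $\beta_0$ yields the asserted estimate.

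The main obstacle is the nonconformity residual: one must recognise that the flux variable $p=\mathbf{A}\nabla u+u\mathbf{b}$, which sits only in $H(\mathrm{div},\Omega)$ with no pointwise regularity, is the natural quantity arising after integration by parts and that $\Pi_0 p$ is precisely what is testable against $\nabla_{\text{pw}}w$ thanks to \eqref{integ_mean_ortho}. Once this observation is made, the dual role played by the companion operator $J$ --- namely its enrichment property sending $v_{\text{CR}}$ into $V$ and its $P_1$-orthogonal remainder --- allows both contributions (the $p$-term and the $f-\gamma u$ term) to be handled with a single unified tool, cleanly separating the two data-approximation terms $\|p-\Pi_0 p\|$ and $\mathrm{osc}_1(f-\gamma u,\cT)$ from the geometric best-approximation term $\trinl u-I_{\text{CR}}u\trinr_{\text{pw}}$.
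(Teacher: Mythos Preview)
Your proposal is correct and follows essentially the same route as the paper: the paper applies the discrete inf-sup condition of Theorem~\ref{dis_inf_sup_Se} directly to $e_{\text{CR}}:=I_{\text{CR}}u-u_{\text{CR}}$, while you package this step through Corollary~\ref{rembestapproximation}, but in both cases the heart of the argument is the split $v_{\text{CR}}=Jv_{\text{CR}}+w$ via the conforming companion, followed by the two orthogonality properties $\Pi_0(\nabla_{\text{pw}}w)=0$ and $w\perp P_1(\cT)$ to produce $\|p-\Pi_0 p\|$ and $\mathrm{osc}_1(f-\gamma u,\cT)$, and the stability \eqref{J4_enrich} to close. The only cosmetic difference is that the paper's direct manipulation leaves an extra cross-term $b_{\text{pw}}(u-I_{\text{CR}}u,v_{\text{CR}})$ that is absorbed into $\trinl u-I_{\text{CR}}u\trinr_{\text{pw}}$ by boundedness of $\widehat{b}$, whereas your use of Corollary~\ref{rembestapproximation} hides this inside the constant $M=\|\widehat{a}+\widehat{b}\|$.
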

 
\begin{proof}
Given  $e_{\text{CR}}:=I_{\text{CR}}u-u_{\text{CR}}$, the discrete inf-sup condition of Theorem~\ref{dis_inf_sup_Se} implies the existence of $v_{\text{CR}}\in CR_0^1(\cT)$ with $\trinl v_{\text{CR}}\trinr_{\text{pw}}\le 1/\beta_0$ and
\begin{align}\label{bhapp3}
\trinl e_{\text{CR}}\trinr_{\text{pw}}
= a_{\text{pw}}(e_{\text{CR}},v_{\text{CR}})+b_{\text{pw}}(e_{\text{CR}},v_{\text{CR}}).
\end{align}
Recall from (a)-(b) in the proof of Theorem \ref{dis_inf_sup_Se} that 
$v:= J  v_{\text{CR}}$ satisfies $I_{\text{CR}}v=v_{\text{CR}}$ and $\Pi_1v=\Pi_1v_{\text{CR}}$. 
Since $a(u,v)=-b(u,v)+F(v)$ and $u_{\text{CR}}$ solves \eqref{dis_weak_Se},  $w:=v-v_{\text{CR}}$ 
satisfies  
\begin{align*}
a_{\text{pw}}(e_{\text{CR}},v_{\text{CR}})&=a_{\text{pw}}(u,v_{\text{CR}})-a_{\text{pw}}(u_{\text{CR}},v_{\text{CR}})\\
&=F(w) -a_{\text{pw}}(u,w)-b(u,v)+b_{\text{pw}}(u_{\text{CR}},v_{\text{CR}}).
\end{align*}
This leads in \eqref{bhapp3} to 
\begin{align*}
\trinl e_{\text{CR}}\trinr&= F(w) -a_{\text{pw}}(u,w)-b_{\text{pw}}(u,w) {-b_{\text{pw}}(u-I_{\text{CR}}u, v_{\text{CR}})}\notag\\
&=\int_{\Omega}(f-\gamma u) w\dx - \int_{\Omega} p\cdot\nabla_{\text{pw}}w\dx { -b_{\text{pw}}(u-I_{\text{CR}}u, v_{\text{CR}})}.
\end{align*}
Since $\nabla_{\text{pw}}w\,\bot\, P_0(\cT;\bR^n)$ in $L^2(\Omega;\bR^n)$ and $w\,\bot\, P_1(\cT;\bR^n)$, {an upper bound for the first two terms on the right-hand side is}
\begin{align}
&\integ(I-\Pi_1)(f-\gamma u)w\dx - \integ(p-\Pi_0 p)\cdot \nabla_{\text{pw}} w\dx\notag\\
&\leq \left(\|p-\Pi_0 p\|+{\rm osc}_1(f-\gamma u,\cT)\right)
\underline{\lambda}^{-1/2} \trinl w\trinr_{ {\text{pw}}}\notag\\
&\leq \Lambda_{\text{CR}}\underline{\lambda}^{-1/2}\beta_0^{-1}
\left(\|p-\Pi_0 p\|+{\rm osc}_1(f-\gamma u,\cT)\right).
\end{align}
This and a triangle inequality conclude the proof.
\end{proof}

{
\subsubsection{A modified CR-FEM for general right-hand sides} \label{gen_rhs}
The nonconforming scheme of the previous subsection allows for a right-hand side in $L^2(\Omega)$, 
while conforming variants directly apply to $f \in H^{-1}(\Omega)$. This subsection briefly discusses the modification for  $f \in H^{-1}(\Omega)$ and a surprising 
analog to Theorem \ref{SA_err_est} without  oscillation terms. 
Given  $f \in H^{-1}(\Omega)$,  the modified CR-FEM 
seeks  $\widetilde{u}_{\text{CR}}\in CR_0^1(\cT)$ such that 
\begin{align}\label{dis_weak_Se1}
 a_{\text{pw}}(\widetilde{u}_{\text{CR}},v_{\text{CR}})+\widetilde{b}_{\text{pw}}(\widetilde{u}_{\text{CR}},v_{\text{CR}})&=<f,Jv_{\text{CR}}>_{H^{-1}(\Omega) \times H^1_0(\Omega)} 
 \quad\text{for all } v_{\text{CR}}\in CR_0^1(\cT)
\end{align}
with the duality brackets $<\bullet ,\bullet >_{H^{-1}(\Omega) \times H^1_0(\Omega)} $ on the right-hand side of \eqref{dis_weak_Se1} acting on 
 $f \in H^{-1}(\Omega)$ and the test function $Jv_{\text{CR}}\in H^1_0(\Omega)$. The bilinear form $b(\bullet,\bullet)$ is replaced in \eqref{dis_weak_Se1} by a modification 
 $ \widetilde{b}_{\text{pw}}(\bullet,\bullet)$ defined, for 
 $u_{\text{CR}},v_{\text{CR}}\in CR_0^1(\cT)$,   by 
\begin{align}
&\widetilde{b}_{\text{pw}}(u_{\text{CR}},v_{\text{CR}}):=\sum_{T\in\cT}\int_T\left(u_{\text{CR}} {\mathbf b}\cdot\nabla_{\text{pw}}  v_{\text{CR}}+\gamma u_{\text{CR}}Jv_{\text{CR}}\right)\dx.
\end{align}
The difference to  $b_{\text{pw}} (u_{\text{CR}},v_{\text{CR}})$ from  \eqref{ab} is in the final 
application of $Jv_{\text{CR}}$ rather than $v_{\text{CR}}$ with the conforming companion
operator $J$ from the proof of Theorem~\ref{dis_inf_sup_Se}.

\begin{thm}[best approximation in modified CR-FEM]\label{SA_err_estmodified}
Let $u\in H^1_0(\Omega)$ solve \eqref{WeakFrmSec} with the right-hand side $f\equiv f\in H^{-1}(\Omega)$ and 
 set $p:={\bf A}\nabla u+u{\bf b}\in H({\rm div},\Omega)$. There exists $\delta>0$ such that for all $\cT\in\bT(\delta)$, the discrete problem \eqref{dis_weak_Se1} has a unique solution $\widetilde{u}_{\text{CR}}\in CR_0^1(\cT)$ 
 and $u, \: \widetilde{u}_{{\rm CR}}, \:p$ and its piecewise integral mean $\Pi_0 p$ satisfy
\[
\trinl u-\widetilde{u}_{{\rm CR}}\trinr_{{\rm pw}}\lesssim \trinl u-I_{{\rm CR}}u\trinr_{{\rm pw}}+\|p-\Pi_0p\|.
\]
\end{thm}

\begin{proof}
The stability of the modified bilinear form $ a_{\text{pw}}(\bullet, \bullet) +\widetilde{b}_{\text{pw}}(\bullet, \bullet)$ follows from the methodology of this section. An immediate 
proof follows from the stability \eqref{eqnewlabelccforinfsupbeta0} and a perturbation argument: For any $ v_{\text{CR}}, w_{\text{CR}}\in CR_0^1(\cT)$ with
$ \trinl v_{\text{CR}}\trinr_{\text{pw}}=1= \trinl w_{\text{CR}}\trinr_{\text{pw}}$, 
\[
 | b_{\text pw}(v_{\text{CR}},   w_{\text{CR}} ) - \widetilde{b}_{\text pw}(v_{\text{CR}},  w_{\text{CR}}) |
 \le \|\gamma\|_{\infty} \| v_{\text{CR}}\|\, \|   w_{\text{CR}} - J w_{\text{CR}}  \|
 \le \|\gamma\|_{\infty} C_\text{pwF}  \Lambda_{\text{CR}}    h_{\max}
 \]
with a piecewise  Friedrichs inequality and  \eqref{J4_enrich} in the last step. 
The combination with   \eqref{eqnewlabelccforinfsupbeta0} and a triangle inequality 
prove stability of the modified scheme
\[
\beta_0/2\le 
\widetilde{ \beta}_h:=\inf_{\substack{w_{\text{CR}}\in CR_0^1(\cT)\\ \trinl w_{\text{CR}}\trinr_{\text{pw}}=1}} \sup_{\substack{ v_{\text{CR}}\in CR_0^1(\cT)\\ 
\trinl v_{\text{CR}}\trinr_{\text{pw}}=1}}(a_{\text{pw}}+\widetilde{b}_{\text{pw}})(w_{\text{CR}},v_{\text{CR}})
\]
for any $\cT\in\mathbb{T}(   \beta_0/(2\|\gamma\|_{\infty} C_\text{pwF}  \Lambda_{\text{CR}}))$. 
The proof of the a priori error estimate follows the arguments of the proof of Theorem \ref{SA_err_est}.  Given  $e_{\text{CR}}:=I_{\text{CR}}u-\widetilde{u}_{\text{CR}}$, the stability of the modified scheme leads to some $v_{\text{CR}}\in CR_0^1(\cT) $ with norm
$\trinl v_{\text{CR}}\trinr_{\text{pw}}\le 2/\beta_0$ and 
\[
\trinl e_{\text{CR}}\trinr_{\text{pw}}
= a_{\text{pw}}( I_{\text{CR}}u- \widetilde{u}_{\text{CR}} ,v_{\text{CR}})+\widetilde{b}_{\text{pw}}(I_{\text{CR}}u- \widetilde{u}_{\text{CR}},v_{\text{CR}}). 
\]
Let  $v:=Jv_{\text{CR}}$ and $w:= v-v_{\text{CR}}$. Then   \eqref{WeakFrmSec}  and  \eqref{dis_weak_Se1}  imply  
\[ 
\trinl e_{\text{CR}}\trinr_{\text{pw}}
 =  a_{\text{pw}}( I_{\text{CR}}u-u ,v_{\text{CR}})  
 -  (p, \nabla_{\text{pw}} w)_{L^2(\Omega)} 
- (u- I_{\text{CR}}u ,   \gamma v +   b\cdot\nabla_{\text{pw}} v_{\text{CR}}  )_{L^2(\Omega)} .
\]
The point is that all terms with $v$ disappear and no oscillation terms remain. In fact, all other terms
are controlled by $\trinl u- I_{\text{CR}}u  \trinr_{\text{pw}}$ or by $\|p-\Pi_0 p\|$ as in the proof
of  Theorem \ref{SA_err_est}; further details are omitted.
\end{proof}}

\section[Semilinear problems with trilinear nonlinearity]{A class of 
semilinear problems with\\ trilinear nonlinearity} \label{error}
This section is devoted to an abstract framework for an a~priori and a~posteriori analysis to solve a class of semilinear problems that includes the applications in Section~4 and 5.

\subsection{A priori error control}
Suppose  $X$ and $Y$ are real Banach spaces and let the 
quadratic function $N:X\to Y^*$ be of the form
\begin{equation}\label{eqccdefN}
N(x):=\cL x+\Gamma(x,x,\bullet)
\end{equation}
with a leading linear operator $A\in L(X;Y^*)$ and  $F\in Y^*$ for the affine operator $\cL x:=Ax-F$ for all $x\in X$ and a bounded trilinear form $\Gamma: X\times X\times Y\to \bR$.

To approximate a regular $u$ solution to $N(u)=0$, the discrete version involves some discrete spaces $X_h$ and $Y_h$ plus  a discrete function  $F_h\in Y_h^*$, 
$\cL_hx_h:=A_hx_h-F_h$, and a 
{bounded trilinear form} $\Gamma_h:X_h\times X_h\times Y_h\to \bR$ with $N_h(x_h)=\cL_h x_h+\Gamma_h(x_h,x_h,\bullet)$. The discrete problem seeks 
$u_h\in X_h$ such that
\[\cL_hu_h+\Gamma_h(u_h,u_h,\bullet)=0\quad \text{ in } Y_h^*.\]
The $a~priori$ error analysis is based on the Newton-Kantorovich theorem and 
adapts  the abstract discrete inf-sup results of Subsection~\ref{sec:abs_result}.
Some further straightforward notation is required for this. Suppose that there exists some   invertible bounded linear operator 
operator $\widehat{A}$ (i.e. $\widehat{A}v=\widehat{a}(v,\bullet)$ in $\widehat{Y}$ for all $v\in\widehat{X}$) on extended Banach spaces $\widehat{X}$ and $\widehat{Y}$ and 
suppose that  there exists   a bounded  extension 
\[
\widehat{\Gamma}:\widehat{X}\times \widehat{X}\times \widehat{Y}\to \bR
\quad\text{with}\quad \| \widehat{\Gamma}\| 
:=\| \widehat{\Gamma}\|_{\widehat{X}\times \widehat{X}\times \widehat{Y}}:=
\sup_{\substack{\widehat{x}\in \widehat{X}\\ \|\widehat{x}\|_{\widehat{X}}=1}}
\sup_{\substack{\widehat{\xi}\in \widehat{X}\\ \|\widehat{\xi}\|_{\widehat{X}}=1}}
\sup_{\substack{\widehat{y}\in \widehat{Y}\\ \|\widehat{y}\|_{\widehat{Y}}=1}}
\widehat{\Gamma}(\widehat{x},\widehat{\xi},\widehat{y})<\infty 
\]
of $\Gamma=\widehat{\Gamma}|_{X\times X\times Y}$ with 
$\Gamma_h=\widehat{\Gamma}|_{X_h\times X_h\times Y_h}$.
Given the regular solution $u\in X$ to $N(u)=0$ in \eqref{eqccdefN}, let the 
bilinear form $\widehat{b}:\widehat{X}\times \widehat{Y}\to \bR$  
be the linearisation of  $\widehat{\Gamma}$ at the solution $u$ , i.e.,
\[
\widehat{b}(\bullet,\bullet):= \widehat{\Gamma}(u,\bullet, \bullet)+\widehat{\Gamma}(\bullet,u,\bullet),
\]
and be bounded by $\|\widehat{b}\|:=\|\widehat{b}\|_{ \widehat{X}\times \widehat{Y} } \le 
2 \|u\|_X  \| \widehat{\Gamma}\|$. 
Adopt the notation  \eqref{defn_ab} for the bilinear forms $a,a_h,b$, and $b_h$
as respective restrictions of $\widehat{a}$ and $\widehat{b}$ and suppose 
$\widehat{F}\in \widehat{Y}^*$ exists with $F:=\widehat{F}|_Y$ and 
$F_h:=\widehat{F}|_{Y_h}$.

Recall 
that the bounded linear operator  $\widehat{A}$ is invertible and the so the associated 
bilinear form $\widehat{a}$ is bounded and satisfies \eqref{dis_Ah_infsup} 
with some positive $\widehat{\alpha}$.

Recall  that $u$ is a regular solution to $N(u)=0$ in the sense that 
$N(u)=0$ and $DN(u)\in L(X;Y^*)$ with $DN(u)=(a+b)(\bullet,\bullet)$ satisfies the 
 inf-sup condition \eqref{cts_infsup}.
 
Suppose all the aforementioned bilinear forms satisfy {\bf (H1)}-{\bf (H4)} with some operators  $P\in L(\widehat{Y};Y_h)$, $ Q\in L(X_h; X)$, and $ \cC\in L(Y_h;Y)$.
In addition to  {\bf (H1)}-{\bf (H4)}  suppose 
that $\delta_5,\delta_6\ge 0$ satisfy 
\\
${\bf (H5)}\quad\displaystyle \delta_5:=\left\|\left(\widehat{F}-\widehat{A}u\right)(1-\cC)\bullet\right\|_{Y_h^*}$;
\\
${\bf (H6)} \quad \displaystyle \exists\,x_h\in X_h $ such that $\delta_6:=\|u-x_h\|_{\widehat{X}}.$
\\
The non-negative  parameters $\delta_1,\delta_2,\delta_3,\delta_5,\delta_6$ 
and $\widehat{\alpha}$,  $\beta$, $\|\widehat{b}\|$ all
depend  on the fixed regular solution $u$ to $N(u)=0$
and this dependence is suppressed in the notation for simplicity. 

Under the present assumptions and with the additional 
smallness assumption $4 {\delta} \|\widehat{\Gamma}\| < \beta_0$
(in the notation of \eqref{ccdefn_beta0}-\eqref{defn_delta})
the properties 
{\bf (A)}-{\bf (B)} hold for the  fixed discretisation at hand 
in the following sense. Suppose that $\|\widehat{\Gamma}\|>0$ for 
otherwise $N$ is a linear equation with a unique solution
and the results of Section~2 apply. 

\begin{thm}[existence and uniqueness of a discrete solution]\label{thm3.1}
Given a regular solution  $u\in X$ to $N(u)=0$, assume the existence of 
extended  bilinear forms $\widehat a$ and $\widehat{b}$ with 
 \eqref{defn_ab}-\eqref{dis_Ah_infsup} and $\widehat{\alpha}>0$ (resp. $\beta>0$ in  \eqref{cts_infsup} 
 and $\widehat{\beta}>0$ in  \eqref{AsmpCondn}). 
Suppose that 
{\bf (H1)}-{\bf (H6)} hold with parameters $\delta_1,\ldots,\delta_6\ge 0$ and 
that  $x_h\in X_h$  satisfies  {\bf (H6)}. Suppose that  
\begin{align}
\label{ccdefn_beta0}
\beta_0&:=\widehat{\alpha}\widehat{\beta}-(\delta_1+\delta_2+\delta_3+2\|\widehat{\Gamma}\|\delta_6) >0\quad\text{and } \\
\label{defn_delta}
{\delta}&:=\beta_0^{-1} \left(\delta_5+ \|\widehat{a}\| \delta_6+\delta_6 \big{(}\|x_h\|_{X_h}+ \|\cC\|\, \|u\|_{X} \big{)}\|\widehat{\Gamma}\|\,+\delta_3/2 \right)\ge 0
 \end{align}
satisfy   $4 {\delta} \|\widehat{\Gamma}\| < \beta_0$.
Then  $ \epsilon:=\delta_6 +\delta + r_-  $ with $m:=2 \|\widehat{\Gamma}\|/\beta_0 >0$,
$h:= \delta m\ge 0$, 
 \begin{equation}\label{rminus}
 r_-:= (1-\sqrt{1-2h })/m  - {\delta}\ge 0 , \quad\text{and}\quad 
  \rho:=(1+\sqrt{1-2h})/ m>0 
 \end{equation}
 satisfy  (i) there exists a solution $u_h\in X_h$ to $N_h(u_h)=0$
 with $\| u-u_h \|_{\widehat{X}}\le \epsilon$ and (ii)  given any $v_h\in X_h$
 with $\|v_h-u_h\|_{{X_h}}\le \rho$, the Newton scheme 
 with initial iterate  $v_h$ converges R-quadratically
 to the discrete solution  $u_h$ in (i). 
 If even $4 \epsilon \|\widehat{\Gamma}\| \le  \beta_0$, then (iii)
 there is at most one solution $u_h\in X_h$ to $N_h(u_h)=0$
 with $\| u-u_h \|_{\widehat{X}}\le \epsilon$.
\end{thm}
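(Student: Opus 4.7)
The strategy is to apply the Newton-Kantorovich theorem to the quadratic operator $N_h$ with initial iterate $x_h\in X_h$ furnished by {\bf (H6)}. Three ingredients are required: an invertibility bound for $DN_h(x_h)$, a discrete residual bound on $\|N_h(x_h)\|_{Y_h^*}$, and a Lipschitz constant for $DN_h$ which, since $N_h$ is quadratic, is exactly $2\|\widehat{\Gamma}\|$.

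For the first ingredient, Theorem~\ref{dis_inf_sup_thm} applied under {\bf (H1)}-{\bf (H4)} yields a discrete inf-sup constant for $a_h+b_h$ of at least $\widehat{\alpha}\widehat{\beta}-(\delta_1+\delta_2+\delta_3)$, where $b$ is the linearisation of $\widehat{\Gamma}$ at the continuous solution $u$. Since $DN_h(x_h)-(a_h+b_h)=\widehat{\Gamma}(x_h-u,\bullet,\bullet)+\widehat{\Gamma}(\bullet,x_h-u,\bullet)$ has norm at most $2\|\widehat{\Gamma}\|\delta_6$ by {\bf (H6)}, a direct perturbation argument lowers the discrete inf-sup constant for $DN_h(x_h)$ by at most $2\|\widehat{\Gamma}\|\delta_6$ and so gives at least $\beta_0>0$, whence $\|DN_h(x_h)^{-1}\|_{Y_h^*\to X_h}\le\beta_0^{-1}$. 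For the second ingredient, I would test $N_h(x_h)$ against $y_h\in Y_h$ and subtract the identity $N(u)(\cC y_h)=0$, which is valid because $\cC y_h\in Y$. This yields an $\widehat{a}$-contribution $\widehat{a}(x_h-u,y_h)$ bounded by $\|\widehat{a}\|\delta_6$; a linear residual $(\widehat{A}u-\widehat{F})((1-\cC)y_h)$ bounded by $\delta_5$ via {\bf (H5)}; and a trilinear difference $\widehat{\Gamma}(x_h,x_h,y_h)-\widehat{\Gamma}(u,u,\cC y_h)$ which is decomposed by interpolating simultaneously $x_h=u+(x_h-u)$ in the first two slots and $y_h=\cC y_h+(y_h-\cC y_h)$ in the third slot. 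The splits are arranged so that $\cC y_h$ appears as the third argument in the term carrying $\|u\|_X$, producing bounds of the shape $\delta_6\|x_h\|_{X_h}\|\widehat{\Gamma}\|$ and $\delta_6\|\cC\|\|u\|_X\|\widehat{\Gamma}\|$, plus a residual of the form $\tfrac12\widehat{b}(x_h,(1-\cC)y_h)$ bounded by $\tfrac12\delta_3\|x_h\|_{X_h}$ through {\bf (H3)} (with the factor $\tfrac12$ coming from the symmetrisation of $\widehat{\Gamma}$ into $\widehat{b}$). Collecting yields $\|N_h(x_h)\|_{Y_h^*}\le\beta_0\delta$ with $\delta$ as in \eqref{defn_delta}, hence $\|DN_h(x_h)^{-1}N_h(x_h)\|_{X_h}\le\delta$.

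With these three quantities, the Newton-Kantorovich theorem applies with Kantorovich number $h=m\delta=2\delta\|\widehat{\Gamma}\|/\beta_0$, and the hypothesis $4\delta\|\widehat{\Gamma}\|<\beta_0$ is exactly $h<\tfrac12$. Its standard conclusions furnish (i) a discrete root $u_h\in\overline{B(x_h,\delta+r_-)}$ of $N_h$, whence $\|u-u_h\|_{\widehat{X}}\le\delta_6+\delta+r_-=\epsilon$ by the triangle inequality; (ii) $R$-quadratic convergence of the Newton iterates started at any point of $\overline{B(u_h,\rho)}$, with $\rho=(1+\sqrt{1-2h})/m$ being the outer Kantorovich radius; and (iii) uniqueness in $\overline{B(u,\epsilon)}$ under the strengthened smallness assumption $4\epsilon\|\widehat{\Gamma}\|\le\beta_0$, proved by assuming a second root $\tilde{u}_h$, using the quadratic identity $N_h(u_h)-N_h(\tilde{u}_h)=DN_h\!\big(\tfrac12(u_h+\tilde{u}_h)\big)(u_h-\tilde{u}_h)$ and applying the perturbed discrete inf-sup at the midpoint (which remains positive under the strengthened assumption). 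The principal obstacle is the residual decomposition: threading the trilinearity identities so that $\|\cC\|$ appears only alongside $\|u\|_X$ and so that the leftover $(1-\cC)$-term fits the template of {\bf (H3)}, rather than spawning additional uncontrolled contributions, is precisely what dictates the particular form of \eqref{defn_delta} and explains why {\bf (H5)}, {\bf (H6)} must accompany {\bf (H1)}-{\bf (H4)}.
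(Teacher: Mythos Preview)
Your proof is correct and follows the paper's argument almost exactly for parts (i) and (ii): the invertibility bound via Theorem~\ref{dis_inf_sup_thm} plus a $2\|\widehat{\Gamma}\|\delta_6$ perturbation, the residual decomposition via $N_h(x_h;y_h)-N(u;\cC y_h)=0$ with the trilinear telescoping, and the Newton--Kantorovich application are all as in the paper. One cosmetic point: you write the $(1-\cC)$-contribution as $\tfrac12\delta_3\|x_h\|_{X_h}$, which is what {\bf (H3)} literally gives, whereas the paper's statement \eqref{defn_delta} records only $\delta_3/2$; your version is actually the accurate reading of {\bf (H3)}.

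For (iii), your route differs from the paper's. You invoke the exact quadratic identity $N_h(u_h)-N_h(\tilde u_h)=DN_h\!\big(\tfrac12(u_h+\tilde u_h)\big)(u_h-\tilde u_h)$ and observe that the midpoint lies in $\overline{B(u,\epsilon)}$, so its discrete inf-sup constant is at least $\beta_1-2\|\widehat{\Gamma}\|\epsilon\ge\beta_0/2>0$ under $4\epsilon\|\widehat{\Gamma}\|\le\beta_0$, forcing $u_h=\tilde u_h$. The paper instead uses the uniqueness ball $\overline{B(x_h,\rho)}$ already supplied by Kantorovich: a second root $\tilde u_h\in\overline{B(u,\epsilon)}$ would satisfy $\|x_h-\tilde u_h\|\le\epsilon+\delta_6\le 2\epsilon\le 1/m\le\rho$, contradicting uniqueness in that ball. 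Your midpoint argument is self-contained and arguably cleaner (it does not require tracking $\rho$), while the paper's argument squeezes slightly more out of the Kantorovich statement already in hand; both are valid.
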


The proof is based on the Newton-Kantorovich convergence theorem found, e.g., in \cite[Subsection~5.5]{MR1344684} for \( X= Y=\bR^n\) and in \cite[Subsection~5.2]{MR816732} for Banach spaces. The notation is adopted to the setting of 
Theorem~\ref{thm3.1}.

\begin{thm}[Kantorovich (1948)] \label{kantorovich}
Assume the Frech\'et-derivative $DN_h(x_h)$ of $N_h$ at some \(x_h\in X_h\) satisfies
\begin{equation}\label{Kanto_Condn}
 \|D N_h(x_h)^{-1}\|_{L( Y_h^*; X_h)} \leq 1/\beta_0
    \quad\text{and}\quad
    \|D N_h(x_h)^{-1}N_h(x_h)\|_{X_h} \leq {\delta}.
\end{equation}
Suppose that \(D N_h\) is Lipschitz continuous with Lipschitz constant
$2 \|\widehat{\Gamma}\|$ 
and that  {\(4 {\delta} \|\widehat{\Gamma}\| \leq \beta_0\)}.
Then there exists a unique root \(u_h\in
  \overline{ B(x_1,r_-)} \) to  \(N_h\) in the ball around the first iterate \(x_1 := x_h - D N_h(x_h)^{-1}N_h(x_h)\) 
  and this is the only root in \(\overline{B(x_h,\rho)}\)
with $r_-, \rho$ from \eqref{rminus}. If even {\(4 {\delta} \|\widehat{\Gamma}\| < \beta_0\)},
then 
the Newton scheme with   initial iterate \(x_h\) leads to a sequence in   {\(B(x_h,\rho)\)}  
that  converges R-quadratically to \(u_h\). \qed
\end{thm}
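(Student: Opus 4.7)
The plan is to apply the Newton-Kantorovich theorem (Theorem~\ref{kantorovich}) to the discrete map $N_h$ with the initial iterate $x_h$ from {\bf (H6)}. Three ingredients are required: (a)~$\|DN_h(x_h)^{-1}\|_{L(Y_h^*;X_h)}\le 1/\beta_0$, (b)~$\|DN_h(x_h)^{-1}N_h(x_h)\|_{X_h}\le\delta$, and (c)~the Lipschitz constant $2\|\widehat{\Gamma}\|$ of $DN_h$ on $X_h$. Once verified, Theorem~\ref{kantorovich} delivers the discrete root $u_h\in\overline{B(x_1,r_-)}$ around the first Newton iterate $x_1 = x_h - DN_h(x_h)^{-1}N_h(x_h)$ with $\|x_h-x_1\|_{X_h}\le\delta$; the triangle inequality $\|u-u_h\|_{\widehat{X}}\le \delta_6 + \delta + r_- =\epsilon$ then yields claim~(i).

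For~(a), note that $(v_h,y_h)\mapsto DN_h(x_h)(v_h,y_h) = a_h(v_h,y_h) + \Gamma_h(x_h,v_h,y_h) + \Gamma_h(v_h,x_h,y_h)$ differs from the discrete linearisation $a_h + b_h$ of $N_h$ at $u$ by $\Gamma_h(x_h-u,v_h,y_h) + \Gamma_h(v_h,x_h-u,y_h)$, a bilinear form of norm at most $2\|\widehat{\Gamma}\|\delta_6$. Theorem~\ref{dis_inf_sup_thm} applied to $\widehat{a},\widehat{b}$ provides the discrete inf-sup constant $\widehat{\alpha}\widehat{\beta} - (\delta_1+\delta_2+\delta_3)$ for $a_h+b_h$; subtracting the perturbation yields exactly the inf-sup constant $\beta_0$ of \eqref{ccdefn_beta0} for $DN_h(x_h)$. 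Step~(c) is immediate from $DN_h(v)-DN_h(w)=\Gamma_h(v-w,\bullet,\bullet)+\Gamma_h(\bullet,v-w,\bullet)$. For~(b), exploit $N(u)=0$ tested against $\cC y_h\in Y$, that is $\widehat{a}(u,\cC y_h)+\widehat{\Gamma}(u,u,\cC y_h)=\widehat{F}(\cC y_h)$. Inserting this into the definition of $N_h(x_h)(y_h)$ produces
\begin{equation*}
N_h(x_h)(y_h) = \widehat{a}(x_h-u, y_h) - (\widehat{F}-\widehat{A}u)((1-\cC)y_h) + \widehat{\Gamma}(x_h,x_h,y_h) - \widehat{\Gamma}(u,u,\cC y_h),
\end{equation*}
whose four contributions are controlled by $\|\widehat{a}\|\delta_6$, by $\delta_5$ via {\bf (H5)}, by a trilinear telescope yielding $\delta_6(\|x_h\|_{X_h}+\|\cC\|\|u\|_X)\|\widehat{\Gamma}\|$, and by $\delta_3/2$ via {\bf (H3)} after symmetrising $\widehat{\Gamma}$ in its first two arguments so that $\widehat{\Gamma}(u,\bullet,\bullet)=\widehat{b}(\bullet,\bullet)/2$. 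Division by $\beta_0$ reproduces the formula~\eqref{defn_delta} for $\delta$.

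For the uniqueness claim~(iii), Theorem~\ref{kantorovich} guarantees uniqueness on $\overline{B(x_h,\rho)}$; any discrete root $u_h'$ with $\|u-u_h'\|_{\widehat{X}}\le\epsilon$ satisfies $\|u_h'-x_h\|_{\widehat{X}}\le \delta_6+\epsilon$, and the identity $\rho+(r_-+\delta)=2/m=\beta_0/\|\widehat{\Gamma}\|$ (from the Vi\`ete formulas for the Kantorovich quadratic $mt^2/2 - t + \delta = 0$) combined with the extra hypothesis $4\epsilon\|\widehat{\Gamma}\|\le\beta_0$ (that is, $\epsilon\le 1/(2m)$) forces $\delta_6+\epsilon\le 2\epsilon\le 1/m\le\rho$, so $u_h'\in\overline{B(x_h,\rho)}$ and therefore $u_h'=u_h$. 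For R-quadratic convergence from an arbitrary $v_h\in\overline{B(u_h,\rho)}$ in~(ii), reapply Theorem~\ref{kantorovich} at $v_h$: the Lipschitz constant $2\|\widehat{\Gamma}\|$ is unchanged, the invertibility of $DN_h(v_h)$ follows from the same perturbation argument on $a_h+b_h$ (since $\|v_h-u\|_{\widehat{X}}\le\rho+\epsilon$ remains controllably small under $4\delta\|\widehat{\Gamma}\|<\beta_0$), and the Taylor expansion $N_h(v_h)=DN_h(u_h)(v_h-u_h)+\Gamma_h(v_h-u_h,v_h-u_h,\bullet)$ based on $N_h(u_h)=0$ furnishes a fresh residual estimate that closes the Kantorovich hypotheses in a ball around $u_h$. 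The main obstacle is the residual telescope in~(b): each of the four pieces must be bound by exactly one of the parameters $\delta_3,\delta_5,\delta_6,\|\widehat{a}\|$ without extraneous factors, in order to reproduce \eqref{defn_delta} precisely and preserve the smallness condition $4\delta\|\widehat{\Gamma}\|<\beta_0$.
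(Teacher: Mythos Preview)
You have written a proof of the wrong statement. The theorem labeled \texttt{kantorovich} is the classical Newton--Kantorovich theorem from 1948; the paper does not prove it but merely quotes it as a black box (note the bare \texttt{\textbackslash qed} immediately after the statement and the explicit references to \cite{MR1344684,MR816732} in the surrounding text). What you have actually produced is a proof of Theorem~\ref{thm3.1}, which \emph{applies} Theorem~\ref{kantorovich} rather than establishing it.

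If one reads your proposal as a proof of Theorem~\ref{thm3.1}, then your argument for parts (i) and (iii) matches the paper's proof essentially line by line: the same perturbation of $a_h+b_h$ by $2\|\widehat{\Gamma}\|\delta_6$ to reach $\beta_0$, the same splitting of $N_h(x_h;y_h)-N(u;\cC y_h)$ into the four pieces controlled by $\delta_5$, $\|\widehat{a}\|\delta_6$, the trilinear telescope, and $\delta_3/2$, and the same uniqueness contradiction via $\delta_6+\epsilon\le 2\epsilon\le 1/m$. For part (ii) you go further than the paper, which simply writes ``Theorem~\ref{kantorovich} implies (ii)'' and so literally only justifies convergence from the particular initial iterate $x_h$, not from an arbitrary $v_h\in\overline{B(u_h,\rho)}$. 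Your idea of re-running Kantorovich centred at $v_h$ is the right instinct, but the sketch is incomplete: you assert that $\|v_h-u\|_{\widehat X}\le\rho+\epsilon$ ``remains controllably small'', yet $\rho=(1+\sqrt{1-2h})/m$ is of order $1/m=\beta_0/(2\|\widehat\Gamma\|)$ and is \emph{not} small, so the inf-sup constant of $DN_h(v_h)$ obtained by your perturbation argument could be negative. A correct argument for (ii) in the stated generality needs a different route (e.g.\ invoking the stronger local convergence statement built into standard proofs of Kantorovich, or perturbing around $u_h$ rather than $u$ once $DN_h(u_h)$ is known to be invertible).
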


\noindent{\it Proof of Theorem \ref{thm3.1}.}
Suppose that $\delta\ge 0$ and $\|\widehat{\Gamma}\|>0 $  so 
that  $r_-\ge 0$ in \eqref{rminus}  well defined.
The bounded trilinear form 
$\Gamma_h=\widehat{\Gamma}|_{ X_h\times  X_h\times  Y_h}$ 
leads to the Frech\'et-derivative $DN_h( x_h)\in L(X_h;Y_h^*)$ with 
\[
DN_h( x_h;\xi_h,\eta_h)=a_h(\xi_h,\eta_h)+\Gamma_h(x_h, \xi_h,\eta_h)
+\Gamma_h( \xi_h,x_h,\eta_h)\quad\text{for all } x_h, \xi_h\in X_h,\;  
\eta_h\in Y_h.
\]
The definitions of $a$ and $b$ and their extensions and discrete versions with 
 {\bf (H1)}-{\bf (H4)} allow  in  Theorem~\ref{dis_inf_sup_thm} for a positive 
 inf-sup constant 
 $\beta_1:=\widehat{\alpha}\widehat{\beta}-(\delta_1+\delta_2+\delta_3)$  in \eqref{eqdis_inf_sup_defbetah} for the bilinear form 
 \[
 D\widehat{N}(u)|_{X_h\times Y_h}= 
 a_h+\widehat{\Gamma}(u,\bullet,\bullet)+\widehat{\Gamma}(\bullet,u,\bullet)
 = a_h+b_h 
 \] 
 for the extended nonlinear form $\widehat{N}(\widehat{x})=\widehat{A}(\widehat{x})
 -\widehat{F}+ \widehat{\Gamma}(\widehat{x},\widehat{x},\bullet) \in \widehat{Y}^*$
 for $\widehat{x}\in \widehat{X}$ and its derivative  $D\widehat{N}(u)$ at $u$.
 This discrete inf-sup condition \eqref{eqdis_inf_sup_defbetah}
and  a triangle inequality with $x_h$ from {\bf (H6)}  lead to an inf-sup constant
\begin{align*}
0< \beta_0:=
\beta_1 - 2\|\widehat{\Gamma}\|\delta_6
\leq \beta_h:=\inf_{\substack{\xi_h\in X_h\\ \|\xi\|_{X_h}=1}}
\sup_{\substack{\eta_h\in Y_h\\ \|\eta_h\|_{Y_h}=1}}DN_h(x_h;\xi_h,\eta_h)
\end{align*}
for the bilinear form 
$DN_h(x_h;\bullet,\bullet)=a_h+\Gamma_h(x_h,\bullet,\bullet)
+\Gamma_h(\bullet,x_h,\bullet)$. 
The discrete inf-sup constant is a singular value and equal to the norm of the inverse operator; 
$1/\beta_0$ is an upper bound of the operator norm of the 
discrete inverse. This proves  the first estimate of \eqref{Kanto_Condn}. 
It also proves  in the second estimate of
\eqref{Kanto_Condn} that
\begin{equation}\label{Kant_cond1}
\|DN_h(x_h)^{-1}N_h(x_h)\|_{X_h}\leq \beta_0^{-1}\|N_h(x_h)\|_{Y_h^*}
\end{equation}
and it remains to estimate $N_h(x_h)$ in the norm of $Y_h^*$.
Given any  $y_h\in Y_h$ with $\|y_h\|_{Y_h}=1$ and   $y:=\cC y_h\in Y$,  an exact Taylor expansion with  $N(u;y)=0$ shows 
\begin{align}
&N_h(x_h;y_h)= N_h(x_h;y_h)-N(u;y)\notag\\
&= \widehat{F}(y-y_h) +a_h( x_h,y_h)-a(u,y)
+\Gamma_h( x_h, x_h,y_h)-\Gamma(u,u,y)\notag\\
&=\widehat{F}(y-y_h) -\widehat{a}( u,y-y_h)
+\widehat{a}(x_h-u,y_h)+\Gamma_h( x_h, x_h,y_h)
-\Gamma(u,u,y).\label{nonlin_exp}
\end{align}
In  abbreviated duality brackets, the  first two terms in \eqref{nonlin_exp}
are equal to
\begin{align*}
\widehat{F}(y-y_h) -\widehat{a}( u,y-y_h)
=\langle \widehat{F}-\widehat{A}u,(\cC-I)y_h\rangle\leq \delta_5
\end{align*}
with {\bf (H5)}. The definition of $\delta_6$ in {\bf (H6)} proves
\begin{align*}
\widehat{a}(x_h-u,y_h)\leq\|\widehat{a}\|\delta_6.
\end{align*}
Up to the factor $2$, the last two terms in \eqref{nonlin_exp} are equal to
\begin{align*}
&2\Gamma_h( x_h, x_h,y_h)-2\Gamma(u,u,y)
=\widehat{\Gamma}( x_h-u, x_h,y_h)+\widehat{\Gamma}( x_h,x_h-u,y_h)\notag\\
&\quad+\widehat{\Gamma}(u,x_h-u,y)+\widehat{\Gamma}(x_h-u,u,y)-\widehat{b}(x_h,y-y_h).\notag\\
&\leq2\delta_6\left(\|x_h\|_{X_h}+\|\cC\| \: \|u\|_{X}\right)\|\widehat{\Gamma}\|\,+\delta_3.
\end{align*}
The combination of the preceding three displayed estimates with 
\eqref{nonlin_exp} implies 
\begin{equation}\label{eqproofofKant_cond1}
\displaystyle\beta_0^{-1}\|N_h(x_h)\|_{Y_h^*}\leq {\delta}
\end{equation}
with ${\delta}\ge 0$ from  \eqref{defn_delta}. The 
combination of \eqref{Kant_cond1} and  \eqref{eqproofofKant_cond1}
shows the second inequality  in \eqref{Kanto_Condn}. The smallness assumption 
reads $h<1/2$ and is stated  explicitly in the theorem; hence 
the Newton-Kantorovich Theorem~\ref{kantorovich} applies. 

Let us interrupt the proof for a brief discussion of the extreme but possible  case 
$\delta=0$ with the implications  $\delta_6=\delta_5=\delta_3=0$ and 
$x_h=u$ in {\bf (H6)}. The proof of   \eqref{eqproofofKant_cond1} remains valid 
in this case and then 
$N_h(x_h)=0$ guarantees  that $u=x_h$ is the discrete solution $u_h$.
In this very particular situation, the Newton scheme converges  and leads to
the constant sequence $x_h=x_1=x_2=...$ with the  limit $x_h=u_h$. 
Theorem~\ref{kantorovich} applies with $r_-=0=\epsilon$ and provides 
(i)-(iii).

Therefore, throughout the remainder of this proof suppose that $\delta>0$ and so 
$\rho,\epsilon, r_- >0$ in  Theorem~\ref{kantorovich} show   the existence of a discrete solution  $u_h$ to $N_h(u_h)=0 $ in $\overline{B(x_1,r_-)}$
 and this is the only discrete solution in $\overline{B(x_h,\rho)}$. This and 
 triangle inequalities lead to
\begin{align*}
\|u-u_h\|_{\widehat{X}}\leq \|u-x_h\|_{\widehat{X}}+
\|x_1-x_h\|_{X_h}
+\|x_1-u_h\|_{X_h}\leq \delta_6+\delta+r_-= \epsilon \label{Newton_conv}
\end{align*}
for  the Newton correction $x_1-x_h$ is estimated in the second inequality of 
\eqref{Kanto_Condn}. This proves the existence of a discrete solution 
$u_h$ in  $ X_h\cap \overline{B(u,\epsilon)}$  as asserted in (i). 

Theorem~\ref{kantorovich} implies (ii) and it remains to prove the uniqueness of discrete solutions in 
$\overline{B(u,\epsilon)}$ under the additional assumption that 
 $4 \epsilon \|\widehat{\Gamma}\| \le  \beta_0$, i.e.,  $2m\epsilon\le 1$.
Recall  that the limit $u_h\in \overline{B(x_1,r_-)}$  in (i)-(ii) is the only discrete
solution in $\overline{B(x_h,\rho)}$.
Suppose there exists a second solution 
$\widetilde{u}_h\in X_h\cap \overline{B(u,\epsilon)}$ to  $N_h(\widetilde{u_h})=0$.
The uniqueness in $\overline{B(x_h,\rho)}$ and a triangle inequality imply that 
\[
\rho< \|  x_h- \widetilde{u}_h\|_{\widehat{X}}\le  \|  u- \widetilde{u}_h\|_{\widehat{X}}
+\|  u- x_h\|_{\widehat{X}}\le \epsilon+\delta_6\le 2\epsilon\le 1/m
\]
with the smallness assumption on $\epsilon$ in the end. But this leads to a
contradiction with 
the  definition of $\rho$ in  \eqref{rminus} and so  concludes the proof of (iii).
\qed

\begin{rem}
In the applications, if $h_{\max}$ is chosen sufficiently small, the parameters $\delta_1,\delta_2,\delta_3$, $\delta_5$, and  $\delta_6$ are also small.
 In particular, ${\delta}$ from \eqref{defn_delta} is small and 
 so is $\epsilon$. This ensures 
 $4 {\delta}\|\widehat{\Gamma}\|\le  4 {\epsilon}\|\widehat{\Gamma}\|< \beta_0$
so that  Theorem~\ref{thm3.1} applies. 
\end{rem}

\begin{rem}
The convergence speed in the Newton-Kantorovich theorem is known 
to be $h=\delta m$ and this parameter is uniformly smaller than one in the applications.
Hence the number of iterations in the Newton scheme does not increase 
as the mesh-size decreases. 
\end{rem}

\subsection{Best-approximation}
This subsection discusses the best-approximation result {\bf (C)} for regular solutions of semilinear problems with trilinear nonlinearity \ccnew{under the assumption
{\bf (H1)}-{\bf (H6)} with parameters $\delta_1,\ldots,\delta_6$ and $\widehat{\alpha}$ (resp. $\widehat{\beta}$) from \eqref{dis_Ah_infsup}  (resp.  \eqref{AsmpCondn})}.

The extra term  $\|\hat N (u)\|_ {Y_h^*}$ in the best-approximation result in 
Theorem~\ref{err_apriori_thm_vke}  will be discussed afterwards and leads to 
some best-  and data-approximation term.

\begin{thm}[a priori]\label{err_apriori_thm_vke} 
If $u$ is a regular solution to $N(u)=0$ and 
 $\delta $ and  $ \epsilon:=\delta_6 +\delta + r_-  $ from  \eqref{defn_delta}-\eqref{rminus}
 satisfy $2m\epsilon\le 1$, then there exists $ C_\text{\rm qo}>0 $ 
 such that  the  unique discrete solution 
 $u_h\in X_h\cap\overline{B(u,\epsilon )}$  satisfies the best-approximation property 
\begin{equation*}
\| u-u_h\|_{\widehat{X}}\leq C_\text{\rm qo} \left(\min_{v_h\in X_h}\|u-v_h\|_{\widehat{X}}
+\|\widehat{N}(u)\|_{Y_h^*}\right).
\end{equation*}
\end{thm}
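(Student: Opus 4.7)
The strategy is to combine the discrete inf-sup condition established in Theorem~\ref{thm3.1} with the observation that $N_h$ is exactly quadratic, so its Taylor expansion around $u_h$ terminates with the trilinear term and leaves no remainder. The outline is: transfer the discrete inf-sup from the auxiliary $x_h$ to the discrete solution $u_h$, apply an exact Taylor expansion at $u_h$ to express $DN_h(u_h;v_h-u_h,\bullet)$ in terms of $N_h(v_h)$, then control $\|N_h(v_h)\|_{Y_h^*}$ by expanding once more at the continuous regular solution $u$, and finally conclude by the triangle inequality.

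First I would transfer the discrete inf-sup condition for $DN_h(x_h;\bullet,\bullet)$ at $x_h$ from \textbf{(H6)} (which Theorem~\ref{thm3.1} provides with constant at least $\beta_0$) to an inf-sup at $u_h$. Since $DN_h(z_h;\xi,\eta)=a_h(\xi,\eta)+\Gamma_h(z_h,\xi,\eta)+\Gamma_h(\xi,z_h,\eta)$ is affine in $z_h$, replacing $x_h$ with $u_h$ perturbs the inf-sup constant by at most $2\|\widehat{\Gamma}\|\,\|u_h-x_h\|_{\widehat{X}}$. A triangle inequality bounds $\|u_h-x_h\|_{\widehat{X}}\le 2\epsilon$, and the standing smallness $2m\epsilon\le 1$ (equivalently $4\|\widehat{\Gamma}\|\epsilon\le\beta_0$) forces the inf-sup constant at $u_h$ to remain bounded below by a fixed positive fraction of $\beta_0$, which I denote $\widetilde{\beta}$.

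Next, for arbitrary $v_h\in X_h$ set $e_h:=v_h-u_h\in X_h$. Since $N_h$ is quadratic and $N_h(u_h)=0$, the exact Taylor identity
$$
DN_h(u_h;e_h,\bullet)=N_h(v_h;\bullet)-\Gamma_h(e_h,e_h,\bullet)\quad\text{in }Y_h^*
$$
combined with the inf-sup from the previous step yields
$$
\widetilde{\beta}\,\|e_h\|_{X_h}\le \|N_h(v_h)\|_{Y_h^*}+\|\widehat{\Gamma}\|\,\|e_h\|_{X_h}^2.
$$
To control $\|N_h(v_h)\|_{Y_h^*}$, I would use the identity $N_h(v_h;y_h)=\widehat{N}(v_h;y_h)$ valid for $v_h\in X_h,\,y_h\in Y_h$ (because all discrete forms are restrictions of the extended ones), together with the exact expansion
$$
\widehat{N}(v_h;\bullet)=\widehat{N}(u;\bullet)+D\widehat{N}(u;v_h-u,\bullet)+\widehat{\Gamma}(v_h-u,v_h-u,\bullet)
$$
around the regular solution $u$. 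This produces
$$
\|N_h(v_h)\|_{Y_h^*}\le \|\widehat{N}(u)\|_{Y_h^*}+(\|\widehat{a}\|+\|\widehat{b}\|)\,\|u-v_h\|_{\widehat{X}}+\|\widehat{\Gamma}\|\,\|u-v_h\|_{\widehat{X}}^2.
$$

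Finally, the triangle inequality $\|u-u_h\|_{\widehat{X}}\le\|u-v_h\|_{\widehat{X}}+\|e_h\|_{X_h}$ together with the two preceding displays, followed by an infimum over $v_h$, delivers the claimed best-approximation estimate modulo the quadratic remainders $\|e_h\|_{X_h}^2$ and $\|u-v_h\|_{\widehat{X}}^2$. Since $\|u-u_h\|_{\widehat{X}}\le\epsilon$, and $\|u-v_h\|_{\widehat{X}}\le\epsilon$ for a near-best $v_h$ (once the mesh is fine enough so that the best-approximation error is at most $\epsilon$), the smallness $4\|\widehat{\Gamma}\|\epsilon\le\beta_0$ bounds both $\|\widehat{\Gamma}\|\,\|e_h\|_{X_h}$ and $\|\widehat{\Gamma}\|\,\|u-v_h\|_{\widehat{X}}$ by a controlled fraction of $\beta_0$. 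This lets me absorb $\|\widehat{\Gamma}\|\|e_h\|_{X_h}^2$ into the left-hand side of the inf-sup bound and $\|\widehat{\Gamma}\|\|u-v_h\|_{\widehat{X}}^2$ into the linear best-approximation term on the right. The main obstacle is exactly this bookkeeping: one must verify that the absorption is clean and that the resulting $C_{\rm qo}$ depends only on the structural constants $\beta_0$, $\|\widehat{a}\|$, $\|\widehat{b}\|$, $\|\widehat{\Gamma}\|$ and not on the mesh --- which the quantitative smallness hypothesis $2m\epsilon\le 1$ is tailored to secure.
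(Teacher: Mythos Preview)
Your approach is correct in outline but takes a different route from the paper. The paper linearizes at the continuous regular solution $u$ rather than at the discrete solution $u_h$: it uses the discrete inf-sup constant $\beta_1=\widehat{\alpha}\widehat{\beta}-(\delta_1+\delta_2+\delta_3)$ of $D\widehat{N}(u)|_{X_h\times Y_h}$ (already available from Theorem~\ref{dis_inf_sup_thm}, with no transfer needed) and a \emph{single} exact Taylor expansion of $\widehat{N}$ at $u$, namely $0=N_h(u_h;y_h)=\widehat{N}(u;y_h)-D\widehat{N}(u;e,y_h)+\widehat{\Gamma}(e,e,y_h)$ with $e=u-u_h$. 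Adding this to $\|e_h\|_{X_h}=D\widehat{N}(u;e_h,y_h)$ for $e_h=u_h^*-u_h$ (with $u_h^*$ the best approximation) gives the bound directly, with only one quadratic term $\|\widehat{\Gamma}\|\,\|e\|_{\widehat{X}}^2$ to absorb; since $\|e\|_{\widehat{X}}\le\epsilon$ and $4\|\widehat{\Gamma}\|\epsilon\le\beta_0\le\beta_1$, the absorption is immediate and yields $C_{\rm qo}=\tfrac{3}{4}\max\{1/\beta_1,\,1+\|D\widehat{N}(u)\|/\beta_1\}$. Your double expansion (at $u_h$, then at $u$) is valid but costs an extra quadratic term and, more importantly, the preliminary transfer of the inf-sup condition.

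On that transfer there is a quantitative slip: your bound $\|u_h-x_h\|_{\widehat{X}}\le 2\epsilon$ produces a perturbation $4\|\widehat{\Gamma}\|\epsilon$, which under the stated hypothesis $4\|\widehat{\Gamma}\|\epsilon\le\beta_0$ can annihilate $\beta_0$ entirely --- so ``a fixed positive fraction of $\beta_0$'' is not actually secured. The fix is either to use the sharper bound $\|u_h-x_h\|_{\widehat{X}}\le\delta+r_-=\epsilon-\delta_6$ from the Newton--Kantorovich conclusion in Theorem~\ref{thm3.1}, or (simpler, and what the paper in effect does) to start from the inf-sup at $u$ with the larger constant $\beta_1$ and perturb from there: then $2\|\widehat{\Gamma}\|\,\|u-u_h\|_{\widehat{X}}\le 2\|\widehat{\Gamma}\|\epsilon\le\beta_0/2\le\beta_1/2$ leaves at least $\beta_1/2>0$.
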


\begin{proof}
Given the best-approximation $u_h^*$ to $u$ in $X_h$ with respect to the norm in 
$\widehat{X}$, set $e_h:=u_h^*-u_h\in X_h$ and apply the discrete $\inf$-$\sup$ 
condition  \eqref{eqdis_inf_sup_defbetah} to the bilinear form  
$D\widehat{N}(u)|_{X_h\times Y_h}$ with  the constant 
$\beta_1:=\widehat{\alpha}\widehat{\beta}-(\delta_1+\delta_2+\delta_3)$
from the proof of Theorem~\ref{thm3.1}. 
This leads to  $y_h\in Y_h$ with $\|y_h\|_{Y_h}\le 1/\beta_1$ and 
\begin{equation}\label{inf_sup_tp1}
\|e_h\|_{X_h}= D\widehat{N}(u;e_h,y_h).
\end{equation}
Since the quadratic Taylor expression of $\widehat{N}$ at $u$ for $N_h(u_h;y_h)=0$ is exact, $e:=u-u_h\in \widehat{X}$ satisfies
\begin{equation}\label{inf_sup_tp2}
0=\widehat{N}(u;y_h)-D\widehat{N}(u;e,y_h) {-} \half D^2\widehat{N}(u;e,e,y_h).
\end{equation}
The sum of \eqref{inf_sup_tp1} and \eqref{inf_sup_tp2},   
$ D^2\widehat{N}(u;e,e,y_h)=2 \Gamma(e,e,y_h)$, and $\|y_h\|_{Y_h}\le 1/\beta_1$
prove
\begin{equation*}
\beta_1 \|e_h\|_{X_h}\leq \|\widehat{N}(u)\|_{Y_h^*}
+\|D\widehat{N}(u)\|\|u-u_h^*\|_{\widehat{X}}
+\|\widehat{\Gamma}\|\|e\|_{\widehat{X}}^2.
\end{equation*}
This,  a triangle inequality, and  
$\min_{x_h\in X_h}\|u-x_h\|_{\widehat{X}}  = \|u-u_h^* \|_{\widehat{X}}$ show
\begin{equation}\label{eqbestapproxalmost}
\left(\beta_1-\|\widehat{\Gamma}\| \|e\|_{\widehat{X}}\right)\|e\|_{\widehat{X}} \leq \|\widehat{N}(u)\|_{Y_h^*}+\left(\beta_1+\| D\widehat{N}(u)\|\right)\min_{x_h\in X_h}\|u-x_h\|_{\widehat{X}}.
\end{equation}
Recall $4\epsilon\|\widehat{\Gamma}\|\le  \beta_0\le \beta_1$ and 
$\|e\|_{\widehat{X}}\le \epsilon$ from Theorem~\ref{thm3.1}, so that
$3\beta_1/4  \le \beta_1-\|\widehat{\Gamma}\| \|e\|_{\widehat{X}}$ leads 
in \eqref{eqbestapproxalmost} to 
$C_{\text{qo}}=3/4\, \max\{  1/ \beta_1,  1 +\| D\widehat{N}(u)\|/\beta_1   \} $
and  ${\rm apx}(\cT):=\|\widehat{N}(u)\|_{Y_h^*}$ 
in the asserted best-approximation. 
This concludes the proof.
\end{proof}

Two examples for the term ${\rm apx}(\cT):=\|\widehat{N}(u)\|_{Y_h^*}$ conclude this subsection. 

\begin{example}
If $Y_h\subset Y$, then ${\rm apx}(\cT)=\|\widehat{N}(u)\|_{Y_h^*}\leq \|N(u)\|_{Y^*}=0$. 
Hence, Theorem~\ref{err_apriori_thm_vke} implies the quasi-optimality of the conforming FEM.
\end{example}

\begin{example}
For the second-order linear non-selfadjoint and indefinite elliptic problems of Subsection~2.2,  
$\|\widehat{\Gamma}\|=0$ and $\beta_0=\beta_1$ etc. is feasible in 
Theorem \ref{err_apriori_thm_vke} and the best-approximation estimate holds. The 
approximation  term ${\rm apx}(\cT)$ is the norm of 
the functional $F-(a_{\text{pw}}+b_{\text{pw}}) (u,\bullet )$ in $V_h^*$. This is exactly 
the extra  term in Corollary~\ref{rembestapproximation} that leads to the additional
two terms in Theorem~\ref{SA_err_est}.
\end{example}

\subsection{A posteriori error control}
The regular solution $u$ to $N(u)=0$ is approximated by some $v_h\in X_h$ sufficiently close to $u$ such that the Theorem~\ref{abs_res_thm} below asserts reliability \eqref{relib_eqn} and efficiency 
\eqref{enrich_bdd}-\eqref{eff_temp2}.

\begin{thm}\label{abs_res_thm}
Any $v_h\in X_h$ with $\|u-Qv_h\|_{X}<\beta/\|\Gamma\|$ satisfies
\begin{align}
& \|u-v_h\|_{\widehat{X}}\leq \frac{\|N(Qv_h)\|_{Y^*}}{\beta-\|\Gamma\| \|u-Qv_h\|_{X}}+\|Qv_h-v_h\|_{\widehat{X}},\label{relib_eqn}\\
& \|Qv_h-v_h\|_{\widehat{X}}\leq \Lambda_4\|u-v_h\|_{\widehat{X}},\label{enrich_bdd}\\
& \|N(Qv_h)\|_{Y^*} \leq (1+\Lambda_4)\left(\|DN(u)\|+
\beta 
\right)\|u-v_h\|_{\widehat{X}}.\label{eff_temp2}
\end{align}
\end{thm}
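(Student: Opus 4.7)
The plan is to exploit the trilinear structure to replace the nonlinear residual by a linearisation at the regular solution $u$, invoke the continuous inf-sup condition \eqref{cts_infsup} for $DN(u)=a+b$, and then use {\bf (H4)} to pass between $v_h$ and $Qv_h\in X$.

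The central calculation is an exact Taylor expansion of the quadratic function $N(x)=Ax-F+\Gamma(x,x,\bullet)$ at $u$. Setting $e:=Qv_h-u\in X$, the bilinearity of $A$ and the expansion
\[
\Gamma(Qv_h,Qv_h,\bullet)-\Gamma(u,u,\bullet)=\Gamma(e,u,\bullet)+\Gamma(u,e,\bullet)+\Gamma(e,e,\bullet)=b(e,\bullet)+\Gamma(e,e,\bullet)
\]
together with $N(u)=0$ yield the identity
\[
N(Qv_h)=DN(u)(e,\bullet)+\Gamma(e,e,\bullet)\quad\text{in }Y^*.
\]
For the reliability \eqref{relib_eqn}, I apply the continuous inf-sup condition \eqref{cts_infsup} to $e\in X$: since $DN(u)=a+b$, this yields $\beta\|e\|_X\le\|DN(u)(e,\bullet)\|_{Y^*}$. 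Rearranging the identity and using $\|\Gamma(e,e,\bullet)\|_{Y^*}\le\|\Gamma\|\|e\|_X^2$ gives $(\beta-\|\Gamma\|\|e\|_X)\|e\|_X\le\|N(Qv_h)\|_{Y^*}$. The smallness hypothesis $\|u-Qv_h\|_X<\beta/\|\Gamma\|$ makes the prefactor positive, so that $\|e\|_X$ is bounded by $\|N(Qv_h)\|_{Y^*}/(\beta-\|\Gamma\|\|e\|_X)$; a triangle inequality $\|u-v_h\|_{\widehat{X}}\le\|e\|_X+\|Qv_h-v_h\|_{\widehat{X}}$ then delivers \eqref{relib_eqn}.

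The bound \eqref{enrich_bdd} is immediate from {\bf (H4)}: applying it to $v_h\in X_h$ and using $u\in X$ as a competitor in the distance yields $\|v_h-Qv_h\|_{\widehat{X}}\le\Lambda_4\operatorname{dist}_{\|\bullet\|_{\widehat{X}}}(v_h,X)\le\Lambda_4\|u-v_h\|_{\widehat{X}}$. For the efficiency \eqref{eff_temp2}, I return to the Taylor identity and estimate
\[
\|N(Qv_h)\|_{Y^*}\le\|DN(u)\|\,\|e\|_X+\|\Gamma\|\,\|e\|_X^{2}\le\bigl(\|DN(u)\|+\beta\bigr)\|e\|_X,
\]
where the smallness assumption absorbs the quadratic term into $\beta\|e\|_X$. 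A triangle inequality combined with \eqref{enrich_bdd} gives $\|e\|_X=\|Qv_h-u\|_{\widehat{X}}\le\|Qv_h-v_h\|_{\widehat{X}}+\|v_h-u\|_{\widehat{X}}\le(1+\Lambda_4)\|u-v_h\|_{\widehat{X}}$, which yields \eqref{eff_temp2}.

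No real obstacle appears: the proof is essentially one exact second-order Taylor expansion plus the inf-sup condition and {\bf (H4)}. The only subtlety is the appearance of $\|e\|_X$ in the denominator of \eqref{relib_eqn}, which is a clean consequence of the trilinear nonlinearity; the smallness hypothesis is precisely what is needed to keep the denominator strictly positive and to convert the quadratic remainder in the efficiency bound into a linear one.
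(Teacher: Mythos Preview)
Your proof is correct and follows essentially the same route as the paper: an exact quadratic Taylor expansion of $N$ at $u$, the continuous inf-sup condition \eqref{cts_infsup} for $DN(u)$, and {\bf (H4)} to pass between $v_h$ and $Qv_h$. The only cosmetic difference is that you invoke the inf-sup condition directly as a dual-norm bound $\beta\|e\|_X\le\|DN(u)(e,\bullet)\|_{Y^*}$, whereas the paper introduces an auxiliary $\epsilon>0$ and a concrete test function $y\in Y$ with $\|y\|_Y=1$, then lets $\epsilon\searrow 0$; the two formulations are equivalent.
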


\begin{proof}
Abbreviate $\xi=Qv_h$ and $e:=u-\xi$. Recall that the bilinear form $a+b$ is associated 
to the derivative $DN(u;\bullet,\bullet)\in L(X;Y^*)$ 
with an inf-sup constant $\beta>0$. Hence for any 
$0<\epsilon<\beta$ there exists some $y\in Y$ with $\|y\|_{Y}=1$ and 
\begin{equation}\label{Der_infsup}
(\beta-\epsilon)\|e\|_{X}\leq DN(u;e,y).
\end{equation}
Since $N(u)=0$ and $N$ is quadratic, the finite Taylor series
\begin{align}\label{quadratic_identity}
N(\xi,y)=- DN(u;e,y)+\half D^2N(u;e,e,y)
\end{align}
is exact. This, $D^2N(u;e,e,y)=2\Gamma(e,e,y)$, and \eqref{Der_infsup} imply
\begin{align*}
(\beta-\epsilon)\|e\|_{X}&\leq -N(\xi,y)+\Gamma(e,e,y)
\leq \|N(\xi)\|_{Y^*}+\|\Gamma\| \|e\|_X^2.
\end{align*}
With $\epsilon\searrow 0$ and $\beta-\|\Gamma\| \|e\|_X>0$, this leads to
\begin{equation*}
\|e\|_X\leq \frac{\|N(\xi)\|_{Y^*}}{\beta-\|\Gamma\| \|e\|_X}.
\end{equation*}
A triangle inequality 
$\displaystyle \|u-v_h\|_{\widehat{X}}\leq \|e\|_X+\|Qv_v-v_h\|_{\widehat{X}}$ 
concludes the proof of \eqref{relib_eqn}.

Recall that {\bf (H4)} implies \eqref{enrich_bdd}. This and a triangle inequality show
\begin{equation*}
\|e\|_X\leq \|u-v_h\|_{\widehat{X}}(1+\Lambda_4).
\end{equation*}
The identity \eqref{quadratic_identity} results in
\begin{align*}
\|N(\xi)\|_{Y^*}\leq \|DN(u;e)\|_{Y^*}+\|\Gamma(e,e,\bullet)\|_{Y^*}
\leq \left(\|DN(u)\|+\|\Gamma\| \|e\|_X\right)\|e\|_X.
\end{align*}
The combination of the previous two displayed estimates proves \eqref{eff_temp2}.
\end{proof}

The discrete function $v_h$ can be estimated in the sense of {\bf (D)} from the introduction.

\begin{cor}[a posteriori]\label{coraposteriori}
In addition to  the assumptions of Theorem~\ref{abs_res_thm} suppose that 
$\|  u- v_h\|_{\widehat{X}}  \le  \epsilon\le \kappa\beta /(\|\Gamma\| (1+\Lambda_4))$ holds
for some positive $\kappa<1$ and  $v_h\in X_h$. Then 
$C_{\text{\rm rel},1}:=1/(\beta(1-\kappa))$ and 
 $C_{\text{\rm rel},2}:= 1+  LC_{\text{\rm rel},1}$  for 
 $L:=\| \widehat{a}\|+2\|\widehat{\Lambda}\|(\| u \|_X+\epsilon(1+\Lambda_4))$
satisfy reliability in the sense that 
\[
\|  u- v_h\|_{\widehat{X}}  \le C_{\text{\rm rel},1} \|\widehat{N}(v_h)\|_{Y^*}
+C_{\text{\rm rel},2} \|Qv_h-v_h\|_{\widehat{X}}
\]
and efficiency with  \eqref{enrich_bdd} and with $C_{\text{\rm eff},1}:= \left((1+\Lambda_4) (\|DN(u)\|+\beta)  + L\Lambda_4\right)$ in 
\[
  \|\widehat{N}(v_h)\|_{Y^*}\le C_{\text{\rm eff},1}\|  u- v_h\|_{\widehat{X}} .
\] 
\end{cor}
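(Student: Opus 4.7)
The plan is to deduce Corollary~\ref{coraposteriori} from Theorem~\ref{abs_res_thm} by bridging between $N(Qv_h)$ (where $Qv_h\in X$) and the non-conforming residual $\widehat{N}(v_h)$ tested on $Y\subset\widehat Y$. First I would check that the smallness hypothesis of Theorem~\ref{abs_res_thm} holds: (H4) together with \eqref{enrich_bdd} and a triangle inequality yield
\[
\|u-Qv_h\|_{X}\le \|u-v_h\|_{\widehat X}+\|v_h-Qv_h\|_{\widehat X}\le (1+\Lambda_4)\|u-v_h\|_{\widehat X}\le (1+\Lambda_4)\epsilon\le \kappa\beta/\|\Gamma\|,
\]
so that $\beta-\|\Gamma\|\|u-Qv_h\|_X\ge \beta(1-\kappa)>0$. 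This also bounds $\|Qv_h\|_X+\|v_h\|_{\widehat X}\le 2(\|u\|_X+\epsilon(1+\Lambda_4))$, a quantity appearing in the Lipschitz constant $L$.

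Next I would estimate the discrepancy between $\widehat N(v_h)$ and $\widehat N(Qv_h)$ tested on $Y$. Because $\widehat N$ is the sum of an affine part driven by $\widehat a$ and a quadratic part driven by $\widehat\Gamma$, an algebraic identity gives, for any $y\in Y$,
\[
\widehat N(Qv_h;y)-\widehat N(v_h;y)=\widehat a(Qv_h-v_h,y)+\widehat\Gamma(Qv_h-v_h,Qv_h,y)+\widehat\Gamma(v_h,Qv_h-v_h,y).
\]
Taking absolute values and using the bound on $\|Qv_h\|_X+\|v_h\|_{\widehat X}$ yields $\|\widehat N(Qv_h)-\widehat N(v_h)\|_{Y^*}\le L\,\|Qv_h-v_h\|_{\widehat X}$ with $L=\|\widehat a\|+2\|\widehat\Gamma\|(\|u\|_X+\epsilon(1+\Lambda_4))$. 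Since $Qv_h\in X$ implies $\widehat N(Qv_h)|_Y=N(Qv_h)$, a triangle inequality then gives
\[
\|N(Qv_h)\|_{Y^*}\le \|\widehat N(v_h)\|_{Y^*}+L\,\|Qv_h-v_h\|_{\widehat X}.
\]

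Finally, I would substitute this into \eqref{relib_eqn} of Theorem~\ref{abs_res_thm}, using the lower bound $\beta(1-\kappa)$ on the denominator, to obtain the reliability estimate with $C_{\text{\rm rel},1}=1/(\beta(1-\kappa))$ and $C_{\text{\rm rel},2}=1+LC_{\text{\rm rel},1}$. The efficiency assertion follows by combining \eqref{eff_temp2} with the Lipschitz bound above and with \eqref{enrich_bdd}, giving $C_{\text{\rm eff},1}=(1+\Lambda_4)(\|DN(u)\|+\beta)+L\Lambda_4$. The only nontrivial step is the Lipschitz bound on $\widehat N$; in particular the precise constant $L$ requires the combined estimate $\|Qv_h\|_X+\|v_h\|_{\widehat X}\le 2(\|u\|_X+\epsilon(1+\Lambda_4))$, which absorbs the extra $(1+\Lambda_4)$ factor arising from (H4). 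All remaining steps are routine triangle inequalities and direct substitutions.
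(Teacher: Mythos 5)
Your proof is correct and follows essentially the same route as the paper: verify that $\|u-Qv_h\|_X\le(1+\Lambda_4)\epsilon\le\kappa\beta/\|\Gamma\|$ so that Theorem~\ref{abs_res_thm} applies with denominator at least $\beta(1-\kappa)$, then use a local Lipschitz bound on $\widehat N$ between $v_h$ and $Qv_h$ to pass between $\|N(Qv_h)\|_{Y^*}$ and $\|\widehat N(v_h)\|_{Y^*}$, and substitute into \eqref{relib_eqn} and \eqref{eff_temp2} respectively. Your explicit algebraic decomposition $\widehat N(Qv_h;y)-\widehat N(v_h;y)=\widehat a(Qv_h-v_h,y)+\widehat\Gamma(Qv_h-v_h,Qv_h,y)+\widehat\Gamma(v_h,Qv_h-v_h,y)$ in fact makes the Lipschitz constant $L=\|\widehat a\|+2\|\widehat\Gamma\|(\|u\|_X+\epsilon(1+\Lambda_4))$ transparent (and confirms that the paper's $\|\widehat\Lambda\|$ in the definition of $L$ is a typo for $\|\widehat\Gamma\|$).
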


\begin{proof}
Recall the abbreviations  $\xi=Qv_h$ and $e:=u-\xi$.  A triangle inequality and {\bf (H4)}  
show that $\|e\|_X \le (1+\Lambda_4) \| u- v_h\|_{\widehat{X}}\le \epsilon(1+\Lambda_4)\le
\kappa \beta/ \|\Gamma\|$.
This and Theorem~\ref{abs_res_thm} imply 
\[
 \|u-v_h\|_{\widehat{X}}\leq \frac{\|N(Qv_h)\|_{Y^*}}{\beta(1-\kappa)}
 +\|Qv_h-v_h\|_{\widehat{X}}.
\]
The derivative $D\widehat{N}$ is globally Lipschitz continuous with a Lipschitz constant
$2\|\widehat{\Lambda}\|$, the function 
$\widehat{N}$ is  Lipschitz continuous 
 in the closed  ball   $\overline{B(u,  \epsilon(1+\Lambda_4))}$  in $\widehat{X}$ 
 with a Lipschitz constant  $L$. 
Since  $  v_h, Qv_h\in \overline{B(u,  \epsilon(1+\Lambda_4))}$,
\[
\|N(Qv_h)\|_{Y^*} \le \|\widehat{N}(v_h)\|_{Y^*} + L\,\|Qv_h-v_h\|_{\widehat{X}}.
\]
The combination of the previous displayed estimates proves the asserted reliability. The
efficiency employs the Lipschitz continuity as well and then utilises 
\eqref{enrich_bdd}-\eqref{eff_temp2}
to verify 
\[
  \|\widehat{N}(v_h)\|_{Y^*} \le \|N(Qv_h)\|_{Y^*}+  L\,\|Qv_h-v_h\|_{\widehat{X}} 
 \leq C_{\text{\rm eff},1}  \|u-v_h\|_{\widehat{X}}.
\]
This  concludes the proof. 
\end{proof}

\section[Incompressible 2D Navier-Stokes problem]{Stream function 
vorticity formulation of the\\ incompressible 2D Navier-Stokes problem}\label{sec:NS}
This section is devoted to the stream function vorticity formulation of 2D Navier-Stokes 
equations with right-hand side $f\in\lt$ in a 
polygonal bounded Lipschitz domain $\Omega\subset\mathbb{R}^2$: 
There exists \cite{Lions} at least one 
distributional solution $u\in V:=\hto$ to
\begin{equation}\label{NS}
\Delta^2 u+\frac{\partial}{\partial x_1}\left((-\Delta u)\frac{\partial u}{\partial x_2}\right)-\frac{\partial}{\partial x_2}\left((-\Delta u)\frac{\partial u}{\partial x_1}\right)=f\text{ in }\Omega.
\end{equation}
The analysis of extreme viscosities lies beyond the scope of this paper and the viscosity 
(the factor in front of the bi-Laplacian in \eqref{NS})
is set  one throughout this paper.

\subsection{Continuous problem}\label{subsectContinuousproblemNS}
The weak formulation to \eqref{NS} seeks $u\in V$ such that 
\begin{equation}\label{NS_weak}
a(u,v)+\Gamma(u,u, v)=F( v)\fl  v\in V.
\end{equation}
The associated bilinear form $a: V\times V\to \bR$ and the trilinear form $\Gamma: V\times V\times V\to \bR$ read
\begin{align*}
& a(\eta, \chi):=\integ \Delta\eta\Delta \chi\dx, \quad
 \Gamma(\eta,\chi,\phi):=\integ \Delta\eta\left(\frac{\partial \chi}{\partial x_2}\frac{\partial \phi}{\partial x_1}-\frac{\partial \chi}{\partial x_1}\frac{\partial \phi}{\partial x_2}\right)\dx,
\end{align*}
and $F\in V^*$ is given by $\displaystyle F( \phi):=\integ f \phi\dx$ for all $\eta,\chi,\phi\in V$. 
The Hilbert space $V\equiv H^2_0(\Omega)$ with the scalar product 
 $a(\bullet,\bullet)$ is endowed with the $H^2$ seminorm 
$\displaystyle\trinl\bullet\trinr:=|\bullet|_{H^2(\Omega)}$
and $\|\bullet\|_{V^*}$ denotes the dual norm.
The bilinear form $a(\bullet,\bullet)$ is equivalent to the  scalar product in $V$
and the trilinear form 
$\Gamma(\bullet,\bullet,\bullet)$ is bounded \ccnew{(owing to the continuous embedding 
$V\subset H^2(\Omega)\hookrightarrow W^{1,4}(\Omega)$)}  with
\[
\langle N(u), v\rangle=N(u; v):=a(u, v)-F(v)+\Gamma(u,u, v)\quad\text{ for all } u,v\in V.
\]
The 2D Navier-Stokes equations in the weak stream function vorticity 
formulation \eqref{NS_weak} seeks $u\in V$ with $N(u)=0$.

The regularity results for the biharmonic operator $\Delta^2$  in \cite{BlumRannacher} ensure that $z\in V$ with $a(z,\bullet)\in H^{-1}(\Omega)\subset V^*$ belongs to 
$ H^{2+s}(\Omega)$ for some elliptic regularity index  {$s \in  (1/2,1]$}  and 
$\|z\|_{H^{2+s}(\Omega)}\leq C \|a(z,\bullet)\|_{H^{-1}(\Omega)}$. 
The regularity results for the 
Navier-Stokes problem \ccnew{in} \cite[Section 6(b)]{BlumRannacher} ensure that 
\ccnew{any weak solution $u\in V$ to $N(u)=0$ satisfies $u\in H^{2+s}(\Omega)$.
This makes the  continuous embeddings  
$H^{2+s}(\Omega) \hookrightarrow W^{1,\infty}(\Omega)$ (for $s>0$) and 
$H^{2+s}(\Omega)\hookrightarrow W^{2,4}(\Omega)$ (for $s>1/2$) available
throughout this (and the subsequent) section.} The embeddings 
and \Holder inequalities  imply 
for $u\in H^{2+s}(\Omega)$ and for $\theta \in V$, {$\phi \in H^1(\Omega)$} that
\begin{align*}
\Gamma(u,\theta,\phi)
& \lesssim \|u\|_{H^{2+s}(\Omega)} \|\theta\|_{H^2(\Omega)}\|\phi\|_{H^1(\Omega)}.
\end{align*}
Consequently,  the derivative $b(\bullet,\bullet) := DN(u;\bullet,\bullet):= 
\Gamma(u,\bullet,\bullet)+\Gamma(\bullet,u,\bullet)$ at the solution $u$ 
is a bounded bilinear form in $H^2(\Omega)\times H^1(\Omega)$ and will be key in the subsequent analysis.

\subsection{Conforming FEM}\label{sec:CFEM_NS}
Let $V_C$ be a conforming finite element space contained in
$ C^1(\overline\Omega)\cap V$; for example, the spaces associated with Bogner-Fox-Schmit, HCT, or Argyris elements \cite{Ciarlet} and a regular triangulation $\cT$
of $\Omega$ into triangles. The conforming finite element formulation  seeks $u_C\in V_C$ with
\begin{equation}\label{NS_dis}
N_h(u_C;v_C):=N(u_C;v_C):=a(u_C, v_C)-F( v_C)+\Gamma(u_C,u_C, v_C)=0\fl  v_C\in V_C.
\end{equation}

\begin{thm}[a priori]\label{apriori_NS_est_C}
If $u$ is a regular solution to $N(u)=0$, then there exist positive 
$\epsilon$, $\delta$, and $\rho$ such that 
 {\bf (A)}-{\bf (C)} hold with ${\rm apx}(\cT)\equiv 0$ 
for all $\cT \in \bT(\delta)$. 
\end{thm}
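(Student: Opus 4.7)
The plan is to instantiate the abstract framework of Section~3 in the simplest possible setting: the conforming case where $\widehat{X}=\widehat{Y}=X=Y=V=H^2_0(\Omega)$, $X_h=Y_h=V_C\subset V$, $\widehat{a}=a$, $\widehat{\Gamma}=\Gamma$, and $\widehat{F}=F$. The inf-sup hypotheses \eqref{dis_Ah_infsup}--\eqref{cts_infsup} hold with $\widehat{\alpha}=1=\|\widehat{a}\|$ and $\beta>0$ because $u$ is a regular solution, so $DN(u)=a+b\in L(V;V^*)$ is invertible. For the operators in {\bf (H1)}--{\bf (H4)}, choose $Q=\cC=\iota:V_C\hookrightarrow V$ to be the inclusion (so $\Lambda_4=0=\delta_1=\delta_3$) and $P\in L(V;V_C)$ to be the Ritz projection with respect to $a$. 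Because $Y_h=V_C\subset V=Y$, {\bf (H5)} yields $\delta_5=0$ as $(1-\cC)\bullet=0$ on $Y_h$, and ${\rm apx}(\cT)=\|\widehat{N}(u)\|_{Y_h^*}\le\|N(u)\|_{V^*}=0$ in Theorem~\ref{err_apriori_thm_vke}.

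The nontrivial step is the estimate of $\delta_2$ in {\bf (H2)}. For unit $u_h\in V_C$ and $\widehat{v}\in V$, let $\xi=A^{-1}(\widehat{b}^*(u_h,\bullet)|_V)\in V$ (using $b^*$ in place of $b$ as in the selfadjoint application), i.e.\ $a(\xi,v)=b(v,u_h)$ for all $v\in V$. Since $u_h=Pu_h$, Galerkin orthogonality of $P$ gives
\[
\widehat{a}\bigl(u_h+\xi,\widehat{v}-P\widehat{v}\bigr)
=a\bigl((I-P)\xi,(I-P)\widehat{v}\bigr)
\le\trinl(I-P)\xi\trinr\,\trinl(I-P)\widehat{v}\trinr.
\]
The linear form $v\mapsto b(v,u_h)=\Gamma(u,v,u_h)+\Gamma(v,u,u_h)$ is bounded on $H^1(\Omega)$ by $\|u\|_{H^{2+s}(\Omega)}\|u_h\|_{H^2(\Omega)}$ (as recorded in Subsection~\ref{subsectContinuousproblemNS}), so it defines an element of $H^{-1}(\Omega)$. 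The Blum--Rannacher biharmonic regularity result cited in the text then gives $\xi\in H^{2+s}(\Omega)$ with $\|\xi\|_{H^{2+s}(\Omega)}\lesssim 1$, whence standard $C^1$-conforming approximation estimates yield $\trinl(I-P)\xi\trinr\lesssim h_{\max}^{s}$. Since $\trinl(I-P)\widehat{v}\trinr\le\trinl\widehat{v}\trinr=1$, the hypothesis {\bf (H2)} holds with $\delta_2\lesssim h_{\max}^{s}\to 0$. Finally, $\delta_6=\min_{v_h\in V_C}\trinl u-v_h\trinr\to 0$ by the regularity $u\in H^{2+s}(\Omega)$ and standard interpolation in $V_C$.

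Collecting the bounds, for all $\cT\in\bT(\delta)$ with $\delta>0$ small, $\beta_0$ in \eqref{ccdefn_beta0} is positive, the parameter $\delta$ in \eqref{defn_delta} is small, and the smallness condition $4\delta\|\widehat{\Gamma}\|<\beta_0$ holds. Theorem~\ref{thm3.1} then yields {\bf (A)} (unique discrete solvability in $\overline{B(u,\epsilon)}$) together with {\bf (B)} (R-quadratic Newton convergence from any initial iterate in $\overline{B(u_h,\rho)}$). Theorem~\ref{err_apriori_thm_vke} with ${\rm apx}(\cT)=0$ delivers {\bf (C)} in the form $\trinl u-u_C\trinr\le C_{\rm qo}\min_{v_C\in V_C}\trinl u-v_C\trinr$. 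The main obstacle, and the only place where the geometry of the problem really enters, is the duality argument for {\bf (H2)}; once the $H^{2+s}$ regularity of both the solution $u$ and the dual problem defining $\xi$ is in hand, every remaining piece reduces to a routine invocation of the abstract theorems and conforming approximation theory.
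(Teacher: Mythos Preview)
Your overall strategy coincides with the paper's: set $\widehat X=\widehat Y=X=Y=V$, $X_h=Y_h=V_C$, take $Q=\cC$ to be the inclusion and $P$ the $a$-Ritz projection, so $\delta_1=\delta_3=\Lambda_4=\delta_5=0$; verify {\bf (H2)} via biharmonic regularity of an auxiliary function; then invoke Theorems~\ref{thm3.1} and~\ref{err_apriori_thm_vke} with ${\rm apx}(\cT)=\|N(u)\|_{V_C^*}=0$. That part is fine.

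The one genuine gap is the unnecessary switch to $b^*$. You define $\xi$ by $a(\xi,v)=b(v,u_h)$ and assert that $v\mapsto b(v,u_h)=\Gamma(u,v,u_h)+\Gamma(v,u,u_h)$ lies in $H^{-1}(\Omega)$ ``as recorded in Subsection~\ref{subsectContinuousproblemNS}''. That subsection establishes the \emph{other} direction: $b$ is bounded on $H^2\times H^1$, i.e.\ $\phi\mapsto b(\theta,\phi)\in H^{-1}$ for $\theta\in H^2$. Your functional instead puts the low-regularity argument in the \emph{first} slot, and the term $\Gamma(v,u,u_h)=\int_\Omega\Delta v\,(\partial_2 u\,\partial_1 u_h-\partial_1 u\,\partial_2 u_h)\,dx$ contains $\Delta v$, which is not controlled by $\|v\|_{H^1}$. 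One can rescue this by an additional integration by parts (using $\partial_\nu v=0$ on $\partial\Omega$ and $g:=\partial_2 u\,\partial_1 u_h-\partial_1 u\,\partial_2 u_h\in H^1$), but that argument is neither in Subsection~\ref{subsectContinuousproblemNS} nor in your write-up.

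The simple fix, which is exactly what the paper does, is to drop the $b^*$ detour and work with {\bf (H2)} as stated: set $z:=A^{-1}(\widehat b(\theta_h,\bullet)|_V)$, i.e.\ $a(z,\phi)=b(\theta_h,\phi)$ for all $\phi\in V$. Then the right-hand side $\phi\mapsto b(\theta_h,\phi)$ \emph{is} the functional that Subsection~\ref{subsectContinuousproblemNS} places in $H^{-1}(\Omega)$, Blum--Rannacher gives $z\in H^{2+s}(\Omega)$ with $\|z\|_{H^{2+s}}\lesssim 1$, and your Galerkin-orthogonality computation
\[
\widehat a(\theta_h+z,\widehat v-P\widehat v)=a(z-Pz,\widehat v)\le\trinl z-Pz\trinr\lesssim h_{\max}^s
\]
goes through verbatim. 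Everything else in your proposal is correct.
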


\begin{proof}
Set $X=Y=V$,  $X_h=Y_h=V_C$,  
$\widehat{a}(\bullet,\bullet):=a(\bullet,\bullet)$, and 
$ \widehat{b}(\bullet,\bullet):=b(\bullet,\bullet)
:=\Gamma(u,\bullet,\bullet)+\Gamma(\bullet,u,\bullet)$. 
For $\cC$ and $Q$ chosen as identity, 
the parameters in the hypotheses {\bf (H1)} and {\bf (H3)}-{\bf (H5)} are  
$\delta_1=\delta_3=\Lambda_4=\delta_5=0$.

For the proof of {\bf (H2)}, suppose 
 $\theta_h\equiv x_h\in V_C\subset V$ with $ \trinl\theta_h \trinr=1$ and recall
 from the end of the previous subsection that 
 $\widehat{b}(\theta_h, \bullet)\in H^{-1}(\Omega)$.  Hence  the solution 
$z\in V$ to the biharmonic problem
\begin{equation*}
a(z,\phi)=\widehat{b}(\theta_h,\phi)\fl \phi\in X
\end{equation*}
satisfies $z\in H^{2+s}(\Omega)$ and 
$\|z\|_{H^{2+s}(\Omega)}\leq C \trinl\theta_h\trinr= C$. 
(Note that $z$ is called $A^{-1}(\widehat{b}(x_h,\bullet)|_{Y})$ in
Subsection~\ref{sec:abs_result}).
This regularity and the Galerkin projection ${P}$ with the 
Galerkin orthogonality and the approximation 
property \ccnew{$\trinl z-{P}z\trinr\lesssim h_{\rm {max}}^s$ \cite{Brenner} lead 
for any $y\in Y\equiv V$ with $ \trinl y \trinr=1$ to 
\begin{equation*}
a(x_h + z, {y}-{P}{{y}} ) =a(z,{y}-{P}{{y}} )
= a(z-Pz,y) \lesssim h_{\max}^{s}.
\end{equation*}
This proves   {\bf (H2)} with  $\delta_2\lesssim h_{\rm max}^s$.
The choice $x_h={P}u$ implies {\bf (H6)} with $\delta_6\lesssim h_{\rm max}^{s}$ 
(from the higher regularity of $u$ and 
$\trinl u-Pu\trinr\lesssim h_{\max}^s$). Consequently,   
for sufficiently small maximal mesh-size $h_{\max}$, 
Theorem~\ref{dis_inf_sup_thm} provides 
the discrete inf-sup condition $1\lesssim \beta_h$ and 
Theorem~\ref{thm3.1} applies.}
Since $V_C$ is a conforming finite element space, 
Theorem~\ref{err_apriori_thm_vke} holds with 
${\rm apx}(\cT):= \|\widehat{N}(u)\|_{Y_h^*} \equiv 0$. 
This concludes the proof.
\end{proof}

The explicit residual-based $a~posteriori$ error estimator for the stream function vorticity formulation of 2D Navier-Stokes equations requires some notation for the differential operators: For any scalar function $v$, vector field $\Phi=(\phi_1,\phi_2)^T$, 
and tensor $ {\boldsymbol \sigma}$ with the 4 entries $\sigma_{11}$, $\sigma_{12}$,
$\sigma_{21}$, and $\sigma_{22}$ in form of a $2\times 2$ matrix,  
$$\displaystyle \nabla v=\begin{pmatrix}
\frac{\partial v}{\partial x_1}\\
\frac{\partial v}{\partial x_2}
\end{pmatrix},\;  {\rm Curl}\, v=\begin{pmatrix}
-\frac{\partial v}{\partial x_2}\\
\frac{\partial v}{\partial x_1}
\end{pmatrix},\;
{\rm curl} \begin{pmatrix}
\phi_1\\
\phi_2
\end{pmatrix}
=\frac{\partial \phi_2}{\partial x_1}-\frac{\partial \phi_1}{\partial x_2},
\quad 
D\Phi=
\begin{pmatrix}
\frac{\partial \phi_1}{\partial x_1} &\frac{\partial \phi_1}{\partial x_2}\\
\frac{\partial \phi_2}{\partial x_1} &\frac{\partial \phi_2}{\partial x_2}
\end{pmatrix}, 
$$ 

\noindent $$ 
\divc \begin{pmatrix}
\phi_1\\
\phi_2
\end{pmatrix}=\frac{\partial \phi_1}{\partial x_1}+\frac{\partial \phi_2}{\partial x_2}
,\;
{\rm Curl} \begin{pmatrix}
\phi_1\\
\phi_2
\end{pmatrix}
=\begin{pmatrix}
-\frac{\partial \phi_1}{\partial x_2} &\frac{\partial \phi_1}{\partial x_1}\\
-\frac{\partial \phi_2}{\partial x_2} &\frac{\partial \phi_2}{\partial x_1}
\end{pmatrix}
, \;
{\text {and} \;}
\divc{\boldsymbol \sigma}
=\begin{pmatrix}
\frac{\partial \sigma_{11}}{\partial x_1} +\frac{\partial \sigma_{12}}{\partial x_2}\\
\frac{\partial \sigma_{21}}{\partial x_1} +\frac{\partial \sigma_{22}}{\partial x_2}
\end{pmatrix}.
$$
For any $K\in\cT$ and $E\in\cE(\Omega)$, define the volume and edge  error estimators by 
\begin{align*} 
\eta_K^2&:=h_K^4\big{\|}{} \Delta^2 u_C
-{\rm curl}(\Delta u_C\nabla u_C) -f\big{\|}^2_{L^2(K)}, \\
\eta_E^2&:=h_E^{3}\left\|
\left [\divc (D^2   u_C)\right]_E \cdot\nu_E\right\|^2_{L^2(E)}  
+h_E\left\|\left[\Delta u_C \right]_E\right\|^2_{L^2(E)}\\
&\quad+h_E^{3}\left\|[\Delta u_C\nabla u_C]_E\cdot\tau_E\right\|^2_{L^2(E)}
\end{align*}
with   the unit tangential (resp. normal) vector  $\tau_E$ 
(resp. $\nu_E$) along  the edge $E\in\cE$. 
Recall ${\rm osc}_{m}(\bullet,\cT):=\|h_{\cT}^2(I-\Pi_m)\bullet\|_{L^2(\Omega)}$ for 
$m\in\mathbb{N}_0$ in all
fourth-order applications.

\begin{thm}[a posteriori] \label{reliability_C_NS} 
If  $u\in V$ is a regular solution to $N(u)=0$ and $m\in\bN_0$, then there exist positive 
$\epsilon,\delta, C_{\rm rel}$, and $C_{\rm eff}$ such that, for any $\cT \in \bT(\delta)$,
the unique discrete solution  $u_C\in  V_C$ to \eqref{NS_dis} with 
$\trinl u-u_C\trinr<\epsilon$ satisfies 
\begin{align}
C_{\rm rel}^{-2}\trinl u-u_C\trinr^2&\leq\sum_{K\in\cT}\eta_K^2
+\sum_{E\in\cE (\Omega)}\eta_E^2\leq C_{\rm eff}^2(\trinl u-u_C\trinr^2
+{\rm osc}_{m}^2(f)). 
\label{reliability_est_C_NS}
\end{align}
\end{thm}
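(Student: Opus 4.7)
Apply the abstract a~posteriori Corollary~\ref{coraposteriori}, expand the residual $N(u_C)$ elementwise to identify $\eta_K$ and $\eta_E$, and localise with Verfürth bubble functions for efficiency.

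\textbf{Reliability.} In the conforming setting $X_h=V_C\subset V$, so $Q=I$ and the term $\|Qv_h-v_h\|_{\widehat{X}}$ in Corollary~\ref{coraposteriori} vanishes; its smallness hypothesis is met by Theorem~\ref{apriori_NS_est_C} for $\cT\in\bT(\delta)$ with $\delta$ small. Hence $\trinl u-u_C\trinr\lesssim \|N(u_C)\|_{V^*}$. For any $v\in V$, piecewise integration by parts on each $K\in\cT$ yields
\begin{align*}
 a(u_C,v)&=\sum_{K\in\cT}\int_K\Delta^2 u_C\, v\,dx+\sum_{E\in\cE(\Omega)}\int_E\bigl([\Delta u_C]_E\,\partial_\nu v-[\divc D^2 u_C]_E\!\cdot\!\nu_E\,v\bigr)\,ds,\\
 \Gamma(u_C,u_C,v)&=-\sum_{K\in\cT}\int_K{\rm curl}(\Delta u_C\nabla u_C)\,v\,dx+\sum_{E\in\cE(\Omega)}\int_E[\Delta u_C\nabla u_C]_E\!\cdot\!\tau_E\,v\,ds
\end{align*}
after rewriting $\Gamma(u_C,u_C,v)=\int_\Omega (\Delta u_C\nabla u_C)\cdot{\rm Curl}\,v\,dx$; boundary contributions drop because $v=\partial_\nu v=0$ on $\partial\Omega$. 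Subtracting $F(v)$ identifies the volume residual $R_K:=\Delta^2 u_C-{\rm curl}(\Delta u_C\nabla u_C)-f$ and the three edge residuals in $\eta_E$. Galerkin orthogonality $N(u_C;Jv)=0$ for a $C^1$-conforming Scott--Zhang-type quasi-interpolation $J:V\to V_C$ (available for Argyris, HCT and BFS elements), combined with the standard local estimates
\[
 \|v-Jv\|_{L^2(K)}\lesssim h_K^2|v|_{H^2(\omega_K)},\quad
 \|v-Jv\|_{L^2(E)}\lesssim h_E^{3/2}|v|_{H^2(\omega_E)},\quad
 \|\partial_\nu(v-Jv)\|_{L^2(E)}\lesssim h_E^{1/2}|v|_{H^2(\omega_E)},
\]
finite patch-overlap, and Cauchy--Schwarz, yields the first inequality of \eqref{reliability_est_C_NS}.

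\textbf{Efficiency.} Choose $m\in\bN_0$ large enough that $\Delta^2 u_C-{\rm curl}(\Delta u_C\nabla u_C)\in P_m(\cT)$ and set $R_{K,m}:=\Pi_m R_K$, so $h_K^2\|R_K-R_{K,m}\|_{L^2(K)}\le {\rm osc}_m(f,K)$. Test $N(u_C;\bullet)$ with $\phi_K:=h_K^4 R_{K,m} b_K$ for an $H^2_0(K)$-bubble $b_K=(\lambda_1\lambda_2\lambda_3)^2$ (extended by zero to $V$). The polynomial norm equivalence $\|R_{K,m}\|_{L^2(K)}^2\lesssim\int_K R_{K,m}^2 b_K\,dx$, the inverse estimate $\trinl\phi_K\trinr\lesssim h_K\|R_{K,m}\|_{L^2(K)}$, and the reversal of the integration-by-parts above give $\int_K R_K\phi_K\,dx=N(u_C;\phi_K)$. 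From $N(u)=0$ and the exact quadratic Taylor expansion
\[
 N(u_C;\phi_K)=a(u_C-u,\phi_K)+\Gamma(u_C-u,u_C+u,\phi_K),
\]
the continuous embedding $V\hookrightarrow W^{1,4}(\Omega)$ from Subsection~\ref{subsectContinuousproblemNS} bounds the right-hand side by $\bigl(1+\|\Gamma\|(\trinl u\trinr+\trinl u_C\trinr)\bigr)\trinl u-u_C\trinr_{\omega_K}\trinl\phi_K\trinr$. Division by $\|R_{K,m}\|_{L^2(K)}$ then delivers $\eta_K\lesssim \trinl u-u_C\trinr_{\omega_K}+{\rm osc}_m(f,K)$. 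Analogous edge bubbles $b_E\in H^2_0(\omega_E)$ with polynomial extensions of each of the three edge residuals produce the corresponding efficiency bound for $\eta_E$ on $\omega_E$, and summation concludes the second inequality of \eqref{reliability_est_C_NS}.

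\textbf{Main obstacle.} The technical crux lies in the trilinear terms ${\rm curl}(\Delta u_C\nabla u_C)$ and $[\Delta u_C\nabla u_C]_E\cdot\tau_E$: both are nonlinear in $u_C$ and their linearisation at $u$ produces the cross-product $\Gamma(u-u_C,u+u_C,\phi)$ that must be controlled in terms of the $H^2$ error alone. The regularity $u\in H^{2+s}(\Omega)$ from Subsection~\ref{subsectContinuousproblemNS}, the $\epsilon$-bound giving $\trinl u_C\trinr\lesssim\trinl u\trinr+\epsilon$, and the embedding $V\hookrightarrow W^{1,4}(\Omega)$ jointly keep the efficiency constant mesh-independent. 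A secondary obstacle is constructing $J$ with the three listed local estimates simultaneously inside a $C^1$ space; a Scott--Zhang variant of standard $C^1$ nodal interpolation is the accepted remedy.
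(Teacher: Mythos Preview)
Your proposal is correct and follows essentially the same route as the paper: abstract reliability from Corollary~\ref{coraposteriori} with $Q=I$, elementwise integration by parts to expose the residuals, a $C^1$ quasi-interpolation (the paper's Lemma~\ref{interpolation_BFS}) for the localisation, and Verf\"urth bubble functions for efficiency. Two minor slips deserve correction. First, $\Gamma$ is \emph{not} symmetric in its first two arguments, so the identity $N(u_C;\phi_K)=a(u_C-u,\phi_K)+\Gamma(u_C-u,u_C+u,\phi_K)$ is false as written; the correct splitting is $\Gamma(u_C,u_C,\phi_K)-\Gamma(u,u,\phi_K)=\Gamma(u_C-u,u_C,\phi_K)+\Gamma(u,u_C-u,\phi_K)$, which yields the same bound you state. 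Second, the theorem is asserted for \emph{every} $m\in\bN_0$, not only large $m$; the remedy (and the paper's choice) is to project only $f$, setting $g:=\Delta^2 u_C-{\rm curl}(\Delta u_C\nabla u_C)-\Pi_m f$, which is automatically a polynomial of degree $\max\{k-4,(k-2)(k-1)-1,m\}$ so that the only approximation defect is ${\rm osc}_m(f,K)$.
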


The proof utilizes a quasiinterpolation operator.

\begin{lem}[quasiinterpolation] \label{interpolation_BFS}
For any $\cT\in\bT$ there exists an interpolation operator $\Pi_h: H^2_0(\Omega)\to V_C$ such that,
 for $0\le k\le m\le 2 $ and  $\varphi\in H^2_0(\Omega)$,  
\begin{equation*}
\|\varphi-\Pi_h\varphi\|_{H^k(K)}\lesssim
h_K^{m-k}|\varphi|_{H^q(\omega_K)} 
\end{equation*}
holds for any in the triangle $K\in\cT$ 
and  the  interior $\omega_K$ of the union  $\overline{\omega_K}$    of the triangles in $\cT$ 
sharing a vertex with $K$. 
\end{lem}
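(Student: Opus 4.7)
The plan is to construct $\Pi_h$ as a Scott--Zhang--type quasiinterpolation adapted to the $C^1$ conforming finite element space $V_C$ and to the homogeneous Dirichlet boundary conditions of $H^2_0(\Omega)$. Since every degree of freedom of BFS, HCT, or Argyris involves point evaluations of $\varphi$ or of its (first or second) derivatives, a pure nodal interpolant is not defined on $H^2_0(\Omega)$. I would therefore replace each pointwise functional by a regularised functional built from an $L^2$ projection onto a small local polynomial space over a patch (an edge or a triangle) attached to the corresponding node, exactly as in the Scott--Zhang construction, modified to handle derivative dofs.

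More concretely, let $\{\phi_i\}$ denote the nodal basis of $V_C$ dual to a set of linear functionals $\{N_i\}$ describing the dofs (e.g.\ values of $\varphi, \partial_1\varphi, \partial_2\varphi, \partial_{11}\varphi,\ldots$ at a node $z$). For each $N_i$ I would choose a triangle $T_i\in\cT$ containing the support node and define a regularised functional $\widetilde N_i$ as follows: let $\pi_{T_i}:L^2(T_i)\to P_r(T_i)$ be the $L^2$ projection onto polynomials of degree $r:=\deg V_C$, and set $\widetilde N_i(\varphi):=N_i(\pi_{T_i}\varphi)$; this makes sense on $H^2_0(\Omega)$ because $\pi_{T_i}\varphi$ is a polynomial and its pointwise derivatives are well defined. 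Then set $\Pi_h\varphi:=\sum_i\widetilde N_i(\varphi)\phi_i$. For boundary dofs (nodes on $\partial\Omega$ or dofs encoding $\partial\varphi/\partial\nu$ on $\partial\Omega$) I would select $T_i$ to be an element touching the boundary so that $\pi_{T_i}\varphi\in P_r(T_i)$ inherits the boundary trace condition in the weak sense; in combination with the standard Scott--Zhang trick this forces $\Pi_h\varphi\in V_C\cap H^2_0(\Omega)$ whenever $\varphi\in H^2_0(\Omega)$.

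The two key verifications are (a) \emph{polynomial invariance}: $\Pi_h p=p$ for every $p\in P_r(\omega_K)$ (because $\pi_{T_i}p=p$ on each $T_i\subset\omega_K$ and $\{N_i\}$ is unisolvent on $V_C$), and (b) \emph{local stability}: $\|\Pi_h\varphi\|_{H^k(K)}\lesssim\|\varphi\|_{H^k(\omega_K)}$ for $0\le k\le 2$, which follows from the boundedness of $\pi_{T_i}$ on $L^2$, an inverse estimate to move from $L^2$ of the projection to $L^\infty$ of its pointwise derivatives on $T_i$ (with scaling $h_K^{-j}$ for a $j$th-order dof), and the standard scaling of $\|\phi_i\|_{H^k(K)}$. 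The approximation estimate then follows from Bramble--Hilbert on $\omega_K$: choose $p\in P_{m-1}(\omega_K)$ with $\|\varphi-p\|_{H^m(\omega_K)}\lesssim|\varphi|_{H^m(\omega_K)}$; write
\[
\varphi-\Pi_h\varphi=(\varphi-p)-\Pi_h(\varphi-p)
\]
using polynomial invariance (valid since $m-1\le r$), apply (b) together with the standard estimate $\|\varphi-p\|_{H^k(\omega_K)}\lesssim h_K^{m-k}|\varphi|_{H^m(\omega_K)}$, and scale to obtain the asserted bound $\|\varphi-\Pi_h\varphi\|_{H^k(K)}\lesssim h_K^{m-k}|\varphi|_{H^m(\omega_K)}$.

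The main obstacle, and the only genuinely delicate point, is reconciling the derivative-type dofs with the $H^2_0$ boundary preservation while keeping the operator bounded: pure nodal evaluation is ill-defined on $H^2_0(\Omega)$, and a naive choice of the patch $T_i$ for a boundary-adjacent interior node could destroy the cancellation that is needed to preserve the vanishing of $\varphi$ and $\partial_\nu\varphi$ on $\partial\Omega$. Managing this dof-by-dof (which element $T_i$ to attach to each dof, and at which scale the associated normalising factor $h_K^{-j}$ enters the stability estimate) is routine but element-specific; once the $T_i$ are consistently chosen so that boundary dofs are evaluated on boundary elements and interior dofs on interior elements, the Bramble--Hilbert argument yields the stated estimate uniformly in $\cT\in\bT$ thanks to shape-regularity.
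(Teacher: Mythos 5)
Your approach is essentially the same as the paper's: both construct a Cl\'ement/Scott--Zhang--type quasiinterpolation by replacing the pointwise degree-of-freedom functionals with regularised (local $L^2$-projection) versions, then derive the local estimate from polynomial invariance, a local stability bound, and Bramble--Hilbert. The paper in fact does little more than cite Cl\'ement and note that the only non-routine ingredient is the scaling of the degrees of freedom, so you are reconstructing what the cited argument actually does.

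The one place where your sketch passes over the point the paper singles out as delicate is the phrase ``the standard scaling of $\|\phi_i\|_{H^k(K)}$.'' For Argyris and HCT the nodal basis functions associated with normal-derivative dofs do \emph{not} scale in the affine-family way, because normal directions are not preserved under affine maps of simplices; these elements form what Ciarlet calls an \emph{almost affine} family, not an affine one. The approximation estimates of the Bramble--Hilbert type still hold with the asserted rates, but the verification that the dof scaling factors $h_K^{-j}$ and the basis-function norms behave as needed is exactly the element-specific computation the paper delegates to Ciarlet's treatment of HCT and to Brenner's analysis for Argyris. Your proposal would be complete if, in step (b), you replaced the appeal to ``standard scaling'' by the almost-affine scaling argument (or a citation to it); as written, that step is asserted but not justified for the elements actually used.
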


\begin{proof}
This follows from \cite{Clement75} once the required scaling properties of the degrees
of freedom are clarified. The Argyris or the HCT finite element schemes involve some
normal derivative and  do {\em not} form an affine finite element family, but 
an {\em almost affine} finite element element family \cite{Ciarlet}. It is by now understood that
this guarantees the appropriate scaling properties. This is  explicitly  calculated in  \cite{Ciarlet} for the
HCT finite elements and also follows  for the Argyris finite elements, as employed 
e.g. in \cite[p. 995]{brennermathcomp}. Since the result is frequently accepted
\cite{Verfurth},  further details are omitted. 
\end{proof}

\noindent{\it Proof of Theorem~\ref{reliability_C_NS}.}
Continue the notation of the proof of Theorem~\ref{apriori_NS_est_C}  with
$X=Y=V$, $X_h=Y_h=V_C$, $Q=1$,  etc. and recall that, for sufficiently small $\delta$,
Theorem~\ref{apriori_NS_est_C}  guarantees $\trinl u-u_C\trinr < \beta/\|\Gamma\|$.
Hence Corollary~\ref{coraposteriori} implies (for $v_h\equiv u_C$) 
\begin{align}\label{F1_NS}
\trinl u-u_C\trinr\le  C_{\text{\rm rel},1} \|N(u_C)\|_{V^*}.
\end{align}
With $\Pi_h$ from Lemma~\ref{interpolation_BFS},  some 
appropriate  $\phi\in V$ with $\trinl \phi\trinr=1$  satisfies 
\begin{equation}\label{CFEM_NS_res}
\|N(u_C)\|_{V^*}= N(u_C;\phi)= N(u_C;\phi-\Pi_h\phi).
\end{equation} 
Two successive integrations by parts result in
\begin{align}
&a(u_C,\phi-\Pi_h\phi)=\sik (\Delta^2 u_C) (\phi-\Pi_h\phi)  \dx+\sie\left[\Delta u_C\right]_E \nabla(\phi-\Pi_h\phi)\cdot \nu_E \ds \notag \\
&\qquad\qquad\qquad\qquad- 
\sie (\phi-\Pi_h\phi)\left[\divc(D^2 u_C)\right]_E \cdot\nu_E\ds. 	\label{1}
\end{align}
An integration by parts in the nonlinear term 
$\Gamma(u_C,u_C,\phi-\Pi_h\phi)$ leads to
\begin{align}
&\Gamma(u_C,u_C,\phi-\Pi_h\phi)=\sum_{K\in\cT}
\int_K \Delta u_C\nabla u_C\cdot{\rm Curl}(\phi-\Pi_h\phi)\dx \label{2}\\
&=\sum_{K\in\cT}\int_K (\phi-\Pi_h\phi){\rm curl}(-\Delta u_C\nabla u_C)\dx
+\sum_{E\in\cE}\int_{E}(\phi-\Pi_h\phi) [\Delta u_C\nabla u_C]_E\cdot\tau_E\ds.\notag
\end{align}
Those identities show that  \eqref{CFEM_NS_res} is equal to a sum 
over edges of jump contributions plus a sum
over triangle of volume contributions; the latter is  
\[
\sik\left( \Delta^2 u_C-{\rm curl} (\Delta u_C\nabla u_C)-f   \right) (\phi-\Pi_h\phi)  \dx
\lesssim   \sum_{K\in\cT} \eta_K h_K^{-2} ||   \phi- \Pi_h \phi \|_{L^2(K) }  
\]
and controlled  with standard manipulations 
based on Lemma~\ref{interpolation_BFS} (with $k=0$ and $m=2$) and the finite
overlap of the patches $(\omega_K:K\in\cT)$. The jump contributions include 
some trace inequality as well and are otherwise standard as in linear problems
that involve the bi-Laplacian. For instance, the nonlinear jump contribution 
for each edge $E$ reads
\[
\int_{E}(\phi-\Pi_h\phi) [\Delta u_C\nabla u_C]_E\cdot\tau_E\ds
=\int_{E}(\phi-\Pi_h\phi) [\Delta u_C]_E \, \nabla u_C\cdot\tau_E\ds
\]
in case of  an interior edge $E$ shared by the two triangles $T_+$ and $T_-$ that form the patch
$\omega_E$ and  vanishes in case of a boundary edge $E\subset\partial\Omega$
(with $\phi=\Pi_h\phi=0$ on $\partial\Omega$). 
The continuity of $\nabla u_C$ leads to the previous equality. This term is
controlled by the residual  
$h_E^{3/2} \|  [\Delta u_C]_E \, \nabla u_C\cdot\tau_E\|_{L^2(E)}$ times 
\[
h_E^{-3/2} \|\phi-\Pi_h\phi\|_{L^2(E)}
\lesssim   h_E^{-2} \|  \phi-\Pi_h\phi\|_{L^2(T_\pm)} 
+h_E^{-1}  \|  \phi-\Pi_h\phi \|_{H^1(T_\pm)}
\lesssim | \phi |_{H^2(\omega_{T_\pm})}
\]
with a trace inequality on one of the two triangles $T_\pm$ in the first and 
 Lemma~\ref{interpolation_BFS} (for $k=0,1$)  in the second estimate. 
 The remaining terms are
controlled in a similar way.

Some words are in order about the term 
$
h_E^{3/2} \|  [\Delta u_C]_E \, \nabla u_C\cdot\tau_E\|_{L^2(E)} 
$, in which  an inverse inequality along the interior edge $E=\partial T_+\cap\partial T_-$ 
(shared by $T_\pm\in\cT$) of the polynomial
$ \nabla u_C\cdot\tau_E$ (unique as a trace from $T_\pm$) shows 
$\| \nabla u_C\cdot\tau_E \|_{L^\infty(E)}\lesssim h_E^{-1} \| u_C \|_{L^\infty(E)}$.
This and the global continuous embedding  
$H^2(\Omega)\hookrightarrow L^{\infty}(\Omega)$
leads to
\[
h_E^{3/2} \|  [\Delta u_C]_E \, \nabla u_C\cdot\tau_E\|_{L^2(E)} 
\lesssim h_E^{1/2} \|  [\Delta u_C]_E \|_{L^2(E)}   \trinl u_C \trinr.
\]
Since $ \trinl u_C \trinr\lesssim 1$, the nonlinear edge contribution 
 is controlled  by another contribution
$h_E^{1/2} \|  [\Delta u_C]_E \|_{L^2(E)} $ to $\eta_E$;
in other words,
this nonlinear edge contribution can be omitted. 

The overall strategy in the efficiency proof 
follows the  bubble-function technique due to Verf\"urth 
 \cite{Verfurth}. The emphasis in this paper is on the nonlinear contributions 
and on the interaction of the various nonlinear terms with the volume estimator. 
We will give two examples only to illustrate some details and start with 
the cubic bubble-function  $b_K \in W^{1,\infty}_0(K)$ 
(the product of all three barycentric coordinates times $27$)
of the triangle $K\in \cT$ with $0\le b_K\le\max b_K=1$. 
Let $f_K:=\Pi_m f\in P_m(K)$ be the
 $L^2(K)$  orthogonal polynomial projection of $f\in L^2(K)$ for  degree 
 $m\in \mathbb{N}_0$ 
so that $ \|f-f_K\|_{L^2(K)}=h_K^{-2}{\rm osc}_m(f,K)$.

Since $ g:=  \Delta^2 u_C-{\rm curl} (\Delta u_C\nabla u_C)-f_K$ is a polynomial
of degree at most $\max\{k-4,(k-2)(k-1)-1,m\} $ (recall that $k$ is the 
degree of the  finite element functions), an inverse estimate 
reads    $\|  g\|_K^2\lesssim \int_K\rho_K g\dx$  for the test function
$\rho_K:=b_K^2g\in H^2_0(K)\subset V$. 
The above integrations by parts \eqref{1}-\eqref{2}  with the test function 
$ \phi-\Pi_h\phi $ replaced by $\rho_K$ are restricted to $K$ for 
the support of $b_K$ and $\nabla b_K$  is $K$. 
This leads to the first equality in 
\begin{align*}
\int_K g \rho_K\dx&=a(u_C, \rho_K)+  \Gamma(u_C,u_C,\rho_K) -\int_K\rho_Kf_K\dx\\
&=a(u_C-u, \rho_K)+  \Gamma(u_C,u_C,\rho_K) - \Gamma(u,u,\rho_K)
+\int_K\rho_K(f-f_K)\dx
\end{align*}
and  \eqref{NS_weak}  leads to the second.  
Except for the last term (that leads to oscillations in the end), 
elementary algebra,  $ \Gamma(u,u,\rho_K)- \Gamma(u_C,u_C,\rho_K) 
=\Gamma(u-u_C,u,\rho_K)+\Gamma(u_C,u-u_C,\rho_K)$,
Cauchy, and H\"older inequalities bound the 
above terms upto a constant  by
\begin{equation} \label{NS_trilin}
 \| u-u_C\|_{H^2(K)}\left( (1 +  | u |_{W^{1,\infty}(\Omega)}) \| \rho_K\|_{H^2(K)} 
+| u_C|_{H^2(\Omega)}   | \rho_K|_{W^{1,\infty}(K)}\right).
\end{equation}
The  inverse estimates  
$ \| \rho_K\|_{H^2(K)} + | \rho_K|_{W^{1,\infty}(K)} 
\lesssim h_K^{-2} \| \rho_K\|_{L^2(K)} 
\le h_K^{-2}  \| g \|_{L^2(K)}$
lead in the preceding estimates (after division by  $h_K^{-2}  \| g \|_{L^2(K)}$)
to
\[
h_K^2  \|  g\|_{L^2(K)} \lesssim \| u-u_C\|_{H^2(K)}+{\rm osc}_m(f,K).
\]
This and a  triangle inequality  prove efficiency 
$\eta_K\lesssim  \| u-u_C\|_{H^2(K)}+{\rm osc}_m (f,K)$ of the volume contribution.

The patch  $\omega_E$ of an  interior edge  $E\in \cE$ is the interior of the union of the two neighbouring 
triangles in $\cT$ sharing the edge $E$ and may be a non-convex quadrilateral.  Observe that the
shape-regularity in $\cT$ implies the  shape-regularity of 
the largest rhombus  $R$   contained in the patch $\omega_E$ that has $E$ as one diagonal. 
Let $b_R\in H^1_0(R)\subset H^1_0(\omega_E)$ be the (piecewise quadratic) 
edge-bubble function of $E$ in $R$ (with $0\le b_R\le \max b_R=1$) 
and let $\Phi_E\in P_1(R)$ be the affine function that vanishes along $E$
and satisfies $\nabla \Phi_E =  h_E^{-1}\nu_E$.
Then $b_E:= \Phi_E b_R^3\in H^2_0(R)\subset H^2_0(\omega_E)$  satisfies 
$\nabla b_E \cdot\nu_E= h_E^{-1} b_{R}^3$ along $E$ and
$ | b_E|_{L^{\infty}(\omega_E)}\lesssim 1$ as in  \cite{Georgoulis2011}. 
Extend $[\Delta u_{C}]_E$ constantly in the normal direction to $E$ and set 
$\varrho_E:=h_E^2 [\Delta u_{C}]_E\, b_E$ $\in H^2_0(R)\subset H^2_0(\omega_E)$.  An inverse 
estimate in the beginning,
$\nabla \varrho_E\cdot \nu_E=h_E b_R^3[\Delta u_C]_E$ on $E$, 
and piecewise integrations by parts lead to 
\begin{align*}
&h_E \|[\Delta u_{C}]_E\|_{L^2(E)}^2  \lesssim  h_E\|b_{R}^{{3/2}}[\Delta u_{C}]_E\|_{L^2(E)}^2
=\int_E \nabla\varrho_E\cdot\nu_E [\Delta u_{C}]_E\ds \\
&\qquad =\int_{\omega_E}(\Delta u_{C}\Delta \varrho_E -\varrho_E\Delta^2_{\text{pw}} u_{C})\dx.
\end{align*}
The test-function  $\varrho_E$  in \eqref{NS_weak} shows  that,
the right-hand side reads
\[
 a(u_C-u,\varrho_E)+\Gamma(u_C,u_C,\varrho_E)-\Gamma(u,u,\varrho_E)
 + \int_{\omega_E} \hspace{-2mm}(f-\Delta_{\text{pw}} ^2 u_{C}+{\rm curl}_{\text{pw}} (\Delta u_C\nabla u_C))\varrho_E\dx.
\]
A Cauchy inequality in the first, the arguments for \eqref{NS_trilin} in the second term, and the  bound  
$(\eta_{T_+}+\eta_{T_-})h_E^{-2} \|  \varrho_E \|_{L^2(\omega_E)}$ for the third term lead to 
\[
h_E \|[\Delta u_{C}]_E\|_{L^2(E)}^2  \lesssim
 \left( \| u-u_C\|_{H^2(\omega_E)}  +\eta_{T_+}+\eta_{T_-}\right)  
 \left(h_E^{-2}  \|  \varrho_E \|_{L^2(\omega_E)}+ |\varrho_E |_{H^2(\omega_E)} \right).
\]
The function $\varrho_E$ is polynomial in each of the two open triangles in $R\setminus (E\cup\partial R)$
and allows for inverse estimates. Since  $|b_E|\lesssim 1$ a.e., this proves that the last factor is
controlled by
\[
 h_E^{-2} \|\varrho_E\|_{L^2(\omega_E)}\lesssim \|[\Delta u_{C}]_E\|_{L^2(\omega_E)}
\lesssim  h_E^{1/2}\|[\Delta u_{C}]_E\|_{L^2(E)}
\]
for the constant extension of  $[\Delta u_{C}]_E$ in the direction of $\nu_E$ in the last step.
The combination of the previous two displayed inequalities with the above efficiency of the 
volume contribution concludes the proof of 
\[
h_E^{1/2} \|[\Delta u_{C}]_E\|_{L^2(E)}\lesssim \|u-u_{C}\|_{H^2(\omega_E)}+\eta_{T_+}+\eta_{T_-}
\lesssim \|u-u_C\|_{H^2(\omega_E)}+{\rm osc}_m (f,\{T_+,T_-\}).
\]
The efficiency of $h_E^{3}\left\| \left[\divc (D^2   u_C)\right]_E \cdot\nu_E\right\|^2_{L^2(E)}$ is also established through an adoption of the corresponding arguments in  \cite{Georgoulis2011}. Hence the
straightforward details are omitted. 
\qed

\subsection{Morley FEM}
The nonconforming {\it Morley element space} $V_h:=\cM(\cT)$ 
associated with the  triangulation $\cT$ of the polygonal domain 
$\Omega\subset\mathbb{R}^2$ into triangles reads
\[
\cM(\cT):=\left\{ v_M\in P_2(\cT) \; \vrule\;\;
\begin{aligned}
& v_M \text{ is continuous at } \cN(\Omega) \text{ and vanishes at }  
\cN(\partial \Omega), 
\\&  \int_{E}\left[\frac{\partial v_M}{\partial \nu}\right]_E\ds=0
 \text{ for all }  E \in \cE (\Omega), 
\\ &
 \int_{E}\frac{\partial v_M}{\partial \nu}\ds=0
 \text{ for all }E\in \cE (\partial\Omega)
\end{aligned}
\right\}.
\]
The discrete formulation 
seeks $ u_M\in \cM(\cT)$ such that 
\begin{equation}\label{NS_dis_NC}
N_h(u_M;v_M):=a_{\text{pw}}( u_{M}, v_{M})-F( v_{M})
+\Gamma_{\text{pw}}( u_{M}, u_{M}, v_{M})=0\fl  v_{M}\in \cM(\cT).
\end{equation}
Here and throughout this section, $ \widehat{V}:=V+ \cM(\cT)$ is endowed with the mesh-dependent 
norm 
$\displaystyle \trinl\widehat{\varphi}\trinr_{\text{pw}}
:=\sqrt{a_{\text{pw}}(\widehat{\varphi},\widehat{\varphi})}$
for $\widehat{\varphi} \in \widehat{V}$ and, for all $\eta,\chi,\phi\in \cM(\cT)$, 
\begin{align}\label{eqccnerwandlast1234a}
a_{\text{pw}}(\eta,\chi)
&:=\sum_{K\in\cT}\int_K D^2 \eta:D^2\chi\dx,\\ 
\label{eqccnerwandlast1234b}
\Gamma_{\text{pw}}(\eta,\chi,\phi)
&:= \hspace{-0.2cm}\sum_{T\in\cT}\int_T \Delta \eta\left(\frac{\partial \chi}{\partial x_2}\frac{\partial \phi}{\partial x_1}-\frac{\partial \chi}{\partial x_1}\frac{\partial \phi}{\partial x_2}\right)\dx.
\end{align}

The a~priori error estimate  means  best-approximation upto first-order terms and so refines 
\cite{CN86,CN89} for the Morley FEM and generalises it for any regular solution.  

\begin{thm}[a priori]\label{apriori_NS_est_NCFEM}
If $u\in H^2_0(\Omega)$ is a regular solution to $N(u)=0$, then there exist positive 
$\epsilon$, $\delta$, and $\rho$ such that  {\bf (A)}-{\bf (C)} hold for all $\cT \in \bT(\delta)$ with
\begin{align}\label{eqnapriori_NS_est_NCFEM}
{\rm apx}(\cT) \lesssim \ensuremath{| \!| \! |}u-I_{ M} u\ensuremath{| \!| \! |}_{\text{\rm pw}}
+ \|  h_\cT  \Delta u \nabla u \|
+ \text{\rm osc}_0(f,\cT)\lesssim h_{\rm max}^{s}.
\end{align}
\end{thm}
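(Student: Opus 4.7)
The plan is to instantiate the abstract framework of Section~\ref{error} with $X = Y = V = H^2_0(\Omega)$, $X_h = Y_h = \mathcal{M}(\cT)$, and $\widehat{X} = \widehat{Y} = V + \mathcal{M}(\cT)$ equipped with the broken norm $\trinl\cdot\trinr_{\text{pw}}$, $\widehat{a} = a_{\text{pw}}$, and $\widehat{\Gamma} = \Gamma_{\text{pw}}$ from \eqref{eqccnerwandlast1234a}--\eqref{eqccnerwandlast1234b}. The Morley conforming companion $J\in L(\mathcal{M}(\cT);V)$---a right-inverse of $I_M$ satisfying the analogue of \eqref{J4_enrich},
\[
\|h_\cT^{-2}(v_h - Jv_h)\| + \|h_\cT^{-1}\nabla_{\text{pw}}(v_h - Jv_h)\| + \trinl v_h - Jv_h\trinr_{\text{pw}} \lesssim \min_{v\in V}\trinl v_h - v\trinr_{\text{pw}},
\]
together with elementwise $L^2$-orthogonality $v_h - Jv_h \perp P_0(\cT)$ built into its construction---plays the role of both $Q$ and $\cC$; the operator $P$ is the Ritz projection onto $\mathcal{M}(\cT)$ with respect to $a_{\text{pw}}$; and $x_h := I_M u$ in \textbf{(H6)}. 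By the Blum--Rannacher regularity of Subsection~\ref{subsectContinuousproblemNS}, $u \in H^{2+s}(\Omega)$ with $s \in (1/2, 1]$, so $\delta_6 \lesssim h_{\max}^s$ from standard Morley interpolation and \textbf{(H4)} holds with $\Lambda_4 \lesssim 1$. Hypotheses \textbf{(H1)}--\textbf{(H3)} and \textbf{(H5)} are verified by the arguments of Theorem~\ref{dis_inf_sup_Se} adapted to the biharmonic setting with $\widehat{b}(\cdot,\cdot) := \widehat{\Gamma}(u, \cdot, \cdot) + \widehat{\Gamma}(\cdot, u, \cdot)$: the $H^{2+s}$-regularity of the associated biharmonic adjoint problem combined with the companion approximation yields $\delta_j \lesssim h_{\max}^s$ for $j \in \{1, 2, 3, 5\}$. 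Hence, for $\cT \in \bT(\delta)$ with $\delta$ sufficiently small, Theorems~\ref{thm3.1} and~\ref{err_apriori_thm_vke} deliver \textbf{(A)}--\textbf{(C)} with $\text{apx}(\cT) = \|\widehat{N}(u)\|_{V_h^*}$.

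The decisive estimate of $\text{apx}(\cT)$ relies on the following manipulation. For any $v_h \in \mathcal{M}(\cT)$ set $w := v_h - Jv_h$; since $Jv_h \in V$ satisfies the continuous problem $N(u; Jv_h) = 0$, subtraction gives
\[
\widehat{N}(u; v_h) = a_{\text{pw}}(u, w) + \Gamma_{\text{pw}}(u, u, w) - F(w).
\]
Each of the three contributions is bounded separately. First, the elementwise $L^2$-orthogonality $w \perp P_0(\cT)$ and $\|h_\cT^{-2} w\| \lesssim \trinl v_h\trinr_{\text{pw}}$ combine into $|F(w)| = |\int_\Omega (f - \Pi_0 f) w\, dx| \lesssim \text{osc}_0(f, \cT)\trinl v_h\trinr_{\text{pw}}$. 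Second, rewriting $\Gamma_{\text{pw}}(u, u, w) = \int_\Omega \Delta u\,\nabla u \cdot \text{Curl}_{\text{pw}}(w)\,dx$, elementwise Cauchy--Schwarz and the companion bound $\|\nabla_{\text{pw}} w\|_{L^2(K)} \lesssim h_K \trinl v_h\trinr_{\text{pw}, K}$ produce $|\Gamma_{\text{pw}}(u, u, w)| \lesssim \|h_\cT \Delta u \nabla u\|\,\trinl v_h\trinr_{\text{pw}}$. Third, the bound on $a_{\text{pw}}(u, w)$ exploits the Morley identity $a_{\text{pw}}(u - I_M u, v_M) = 0$ for every $v_M \in \mathcal{M}(\cT)$, which holds because $\int_E \nabla(u - I_M u)\,ds = 0$ for every edge (by the Morley degrees of freedom, combining the normal-mean and the vertex-value conditions) and $D^2 v_M$ is piecewise constant; the decomposition $a_{\text{pw}}(u, w) = a_{\text{pw}}(I_M u, v_h) - a(u, Jv_h) = a_{\text{pw}}(I_M u, w) - a(u - I_M u, Jv_h)$ followed by Cauchy--Schwarz with companion stability on the second term, and edge-based integration by parts exploiting $\int_E \nabla w\,ds$ and $[D^2 I_M u]_E = -[D^2(u - I_M u)]_E$ on the first, delivers $|a_{\text{pw}}(u, w)| \lesssim \trinl u - I_M u\trinr_{\text{pw}}\,\trinl v_h\trinr_{\text{pw}}$.

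The final rate $\text{apx}(\cT) \lesssim h_{\max}^s$ follows from the Morley interpolation estimate $\trinl u - I_M u\trinr_{\text{pw}} \lesssim h_{\max}^s\|u\|_{H^{2+s}}$, the embedding $H^{2+s}(\Omega) \hookrightarrow W^{1,\infty}(\Omega)$ which ensures $\|\Delta u \nabla u\| \lesssim \|u\|_{H^{2+s}}^2$ and hence $\|h_\cT \Delta u\nabla u\| \lesssim h_{\max}$, and $\text{osc}_0(f, \cT) \lesssim h_{\max}^2\|f\|$. The main obstacle is the $a_{\text{pw}}(u, w)$ bound: since $w = v_h - Jv_h$ does not lie in $\mathcal{M}(\cT)$, the Morley orthogonality cannot be applied directly to the test function. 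The proof must split $w$ into its $v_h$-part (absorbed by the Morley identity) and its $Jv_h$-part (absorbed by the continuous identity $a(u, Jv_h) = F(Jv_h) - \Gamma(u, u, Jv_h)$), and the residual edge contributions involving $[D^2 I_M u]_E$ and the averaged traces of $\nabla w$ must be controlled using only the $H^{2+s}$-regularity of $u$ rather than $H^4$, which requires the trace-scaling machinery based on Lemma~\ref{interpolation_BFS} and the companion's edge-integral properties.
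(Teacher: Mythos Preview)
Your overall setup and the decomposition $\widehat{N}(u;v_h)=a_{\text{pw}}(u,w)+\Gamma_{\text{pw}}(u,u,w)-F(w)$ with $w=v_h-Jv_h$ match the paper exactly, as do the bounds on $F(w)$ via $w\perp P_0(\cT)$ and on $\Gamma_{\text{pw}}(u,u,w)$ via $\|h_\cT^{-1}\nabla_{\text{pw}}w\|\lesssim\trinl v_h\trinr_{\text{pw}}$. The verification of \textbf{(H1)}--\textbf{(H6)} is also in line with the paper (which uses $P=I_M$, $Q=\cC=E_M$; your Ritz projection coincides with $I_M$ since $D^2_{\text{pw}}I_M=\Pi_0D^2$).

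Where you diverge is precisely the part you flag as ``the main obstacle'': the bound on $a_{\text{pw}}(u,w)$. The paper obtains this in one line. Since $J$ is a right-inverse of $I_M$, one has $I_Mw=I_Mv_h-I_MJv_h=0$, hence by Lemma~\ref{Morley_Interpolation}(a) $\Pi_0D^2_{\text{pw}}w=D^2_{\text{pw}}I_Mw=0$. Because $D^2_{\text{pw}}(I_Mu)\in P_0(\cT;\bR^{2\times2})$, this gives $a_{\text{pw}}(I_Mu,w)=0$ immediately, and so
\[
a_{\text{pw}}(u,w)=a_{\text{pw}}(u-I_Mu,w)\le\trinl u-I_Mu\trinr_{\text{pw}}\trinl w\trinr_{\text{pw}}\lesssim\trinl u-I_Mu\trinr_{\text{pw}}\trinl v_h\trinr_{\text{pw}}.
\]
Your splitting $a_{\text{pw}}(u,w)=a_{\text{pw}}(I_Mu,w)-a_{\text{pw}}(u-I_Mu,Jv_h)$ followed by edge integration by parts on the first term is not wrong, but it is needlessly elaborate: the term $a_{\text{pw}}(I_Mu,w)$ you propose to attack with trace-scaling and $H^{2+s}$ edge estimates is in fact identically zero. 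The orthogonality $\Pi_0D^2_{\text{pw}}w=0$ is the key structural identity here, and it removes the difficulty entirely.
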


The proof requires the following four lemmas. 

\begin{lem}[Morley interpolation \cite{HuShi_Morley_Apost,CCDGJH14}] \label{Morley_Interpolation}
For any $v\in V+ \cM(\cT)$, the Morley interpolation 
$I_M(v)\in \cM(\cT)$  defined by
$$
(I_M v)(z)=v(z) \text{ for any } z\in \cN(\Omega) \text{ and } 
\int_E\frac{\partial I_M v}{\partial \nu_E}\ds=\int_E\frac{\partial v}{\partial \nu_E}\ds \text{ for any } E\in \cE
$$
satisfies (a) $D^2_{\text{\rm pw}} I_M =\Pi_0 D^2$ and (b)
\begin{equation*}
\|h_K^{-2}(1-I_M)v \|_{L^2(K)}+\|h_K^{-1}\nabla(1-I_M) v\|_{L^2(K)}
+{\|D^2 I_Mv\|_{L^2(K)}}\lesssim
\|D^2v\|_{L^2(K)}.\qquad\qed
\end{equation*}
\end{lem}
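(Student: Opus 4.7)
\medskip

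\noindent\textbf{Plan of proof.}
The strategy splits cleanly into the two assertions: (a) is a local integration-by-parts identity, while (b) follows from (a) combined with a Bramble--Hilbert argument on the reference triangle.

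\medskip

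\noindent\emph{Proof of (a).} Fix $K\in\cT$ with vertices and edges collected in $\cN(K)$ and $\cE(K)$. Since $I_Mv|_K\in P_2(K)$, the Hessian $D^2 I_Mv$ is constant on $K$, hence
\[
|K|\,D^2(I_Mv)|_K=\int_K D^2 I_M v\,\dx=\int_{\partial K}(\nabla I_M v)\otimes\nu\,\ds.
\]
The same Gauss identity gives $|K|\,\Pi_0 D^2v|_K=\int_{\partial K}(\nabla v)\otimes\nu\,\ds$. On each edge $E\in\cE(K)$ with unit tangent $\tau_E$ and unit normal $\nu_E$, decompose $\nabla w=(\partial w/\partial\nu_E)\nu_E+(\partial w/\partial\tau_E)\tau_E$ to split
\[
\int_E(\nabla w)\otimes\nu_E\,\ds=\Bigl(\int_E\tfrac{\partial w}{\partial\nu_E}\ds\Bigr)(\nu_E\otimes\nu_E)+\Bigl(\int_E\tfrac{\partial w}{\partial\tau_E}\ds\Bigr)(\tau_E\otimes\nu_E).
\]
The normal-derivative integrals coincide for $w=v$ and $w=I_Mv$ by the very definition of $I_M$. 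The tangential-derivative integrals are endpoint evaluations $\int_E\partial w/\partial\tau_E\,\ds=w(z_+)-w(z_-)$, and these also coincide since $I_Mv=v$ at every $z\in\cN(\Omega)$ and at boundary vertices both vanish. Summing over $E\in\cE(K)$ proves $D^2(I_Mv)|_K=\Pi_0(D^2v)|_K$, which is (a).

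\medskip

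\noindent\emph{Proof of (b).} The bound $\|D^2 I_Mv\|_{L^2(K)}=\|\Pi_0 D^2v\|_{L^2(K)}\le\|D^2v\|_{L^2(K)}$ is an immediate consequence of (a). For the remaining two terms, observe that $I_M$ reproduces $P_1$: for $q\in P_1(K)$ the vertex values of $I_Mq$ agree with $q$ and the normal derivative $\partial q/\partial\nu_E$ is constant on each $E$, so the edge conditions give $I_Mq=q$. Thus, for every $q\in P_1(K)$,
\[
(1-I_M)v=(1-I_M)(v-q)\quad\text{on }K.
\]
Pass to the reference triangle $\hat K$ via the affine map $F_K:\hat K\to K$ and write $\hat v=v\circ F_K$. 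The analogous reference interpolant $\hat I_M:H^2(\hat K)\to P_2(\hat K)$ is bounded, since its defining functionals (vertex evaluation and integral means of $\partial/\partial\nu$ along edges) are continuous on $H^2(\hat K)$ by Sobolev embedding and the trace theorem; hence $1-\hat I_M$ is bounded on $H^2(\hat K)$ and vanishes on $P_1(\hat K)$. The Bramble--Hilbert lemma therefore yields
\[
\|(1-\hat I_M)\hat v\|_{L^2(\hat K)}+\|\hat\nabla(1-\hat I_M)\hat v\|_{L^2(\hat K)}\lesssim |\hat v|_{H^2(\hat K)}.
\]
Scaling back to $K$ with the shape-regularity-driven equivalences
$\|w\|_{L^2(K)}\approx h_K\|\hat w\|_{L^2(\hat K)}$, $\|\nabla w\|_{L^2(K)}\approx\|\hat\nabla\hat w\|_{L^2(\hat K)}$, and $\|D^2 w\|_{L^2(K)}\approx h_K^{-1}\|\hat D^2\hat w\|_{L^2(\hat K)}$, the previous estimate transforms into
\[
h_K^{-2}\|(1-I_M)v\|_{L^2(K)}+h_K^{-1}\|\nabla(1-I_M)v\|_{L^2(K)}\lesssim\|D^2v\|_{L^2(K)},
\]
which combined with the stability bound yields (b). Note that $v\in V+\cM(\cT)$ satisfies $v|_K\in H^2(K)$ piecewise, which is all that is required by the local reference argument.

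\medskip

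\noindent\emph{Main obstacle.} The only subtle step is verifying (a); everything in (b) is a routine Bramble--Hilbert/scaling exercise once the reference map and the $P_1$-invariance are in place. In (a) one must be careful that both the normal and the tangential edge contributions to $\int_E(\nabla v)\otimes\nu_E\,\ds$ are reproduced by $I_M$---the normal part by the design of the normal-derivative degrees of freedom, and the tangential part only because the $\tau$-derivative integrates to a vertex jump that is captured by the vertex values. This geometric reason is precisely why the Morley scheme is tailored to the Hessian and why (a) fails for more general interpolants.
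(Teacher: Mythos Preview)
The paper does not give its own proof of this lemma; it is stated with a citation to \cite{HuShi_Morley_Apost,CCDGJH14} and closed with a \qed. Your proposal is a correct reconstruction of the standard argument, and both parts are sound.

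One technical point worth tightening in (b): the Morley element is \emph{not} an affine family in the strict sense, because the normal-derivative degree of freedom $\int_E\partial\bullet/\partial\nu_E\,\ds$ does not pull back under an affine map $F_K$ to the normal-derivative functional on the reference edge (the pulled-back direction is oblique and depends on $K$). Your sentence ``the analogous reference interpolant $\hat I_M$'' therefore hides a small issue. The fix is routine and does not change your strategy: either (i) argue that the pulled-back functionals are still bounded on $H^2(\hat K)$ \emph{uniformly} over the shape-regular class (this is the ``almost affine'' mechanism in \cite{Ciarlet}), so Bramble--Hilbert applies with a $K$-independent constant; or (ii) bypass the reference element entirely by bounding each local degree of freedom directly on $K$ with the correctly scaled trace and embedding inequalities and then invoking $P_1$-invariance together with a local polynomial approximation $\inf_{q\in P_1(K)}\|v-q\|_{H^2(K)}\lesssim|v|_{H^2(K)}$. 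Either route yields the same estimate you state.

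Your proof of (a) is clean; the identification of the tangential edge contribution with the vertex jump, together with the observation that $v$ and $I_Mv$ agree at \emph{all} vertices (interior by definition, boundary because both vanish there for $v\in V+\cM(\cT)$), is exactly the point.
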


Let $H^2(\cT(\omega_K))$ denote the piecewise $H^2$ functions on 
the neighbourhood  $\omega_K$, piecewise with respect to  the triangulation 
$\cT(\omega_K)$ of all triangles $T$ with zero distance to $K\in\cT$. Let
$|\bullet|_{H^2(\cT(\omega_K))}$ be the corresponding seminorm
as the local contributions of $\trinl \bullet \trinr_{\text{pw}}$ associated with $\omega_K$.

\begin{lem}[enrichment \cite{BSZ2013,DG_Morley_Eigen}] 
\label{hctenrich} 
There exists an enrichment operator $E_M:\cM(\cT)\to V$ such that $\varphi_M\in \cM(\cT)$ satisfies
\begin{align*}
&(a)\quad \sum_{m=0}^2 h_K^{2m}|\varphi_M-E_M\varphi_M|_{H^m(K)}^2 \lesssim \: h_{K}^4|\varphi_M|_{H^2(\cT(\omega_K))}^2\fl K\in\cT; \\
&(b)\quad \| h_{\cT}^{-2}(\varphi_M-E_M\varphi_M)\|_{\lt}^2\lesssim \sum_{E\in\cE} h_E \|[D^2\varphi_M]_E\tau_E\|_{L^2(E)}^2\\
&\hspace*{4cm}\lesssim \trinl \varphi_M-E_M\varphi_M\trinr_{\text{\rm pw}}^2\leq\Lambda \min_{\varphi\in V}\| D_{h}^2(\varphi_M-\varphi)\|_{\lt}^2; \\
&(c) \quad I_ME_M\varphi_M=\varphi_M,\quad\text{and}\quad
\varphi_M-E_M\varphi_M\perp P_0(\cT)\text{ in }L^2(\Omega).\qquad\qquad\qed
\end{align*}
\end{lem}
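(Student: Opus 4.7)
The plan is to construct $E_M$ via a $C^1$-conforming macro element space (e.g., the cubic HCT space or the Bogner--Fox--Schmit space if $\cT$ admits it), as done in \cite{BSZ2013,DG_Morley_Eigen}, by specifying its degrees of freedom from $\varphi_M\in\cM(\cT)$. At each interior vertex $z\in\cN(\Omega)$, recall that a Morley function is continuous there; set $(E_M\varphi_M)(z):=\varphi_M(z)$, and define $\nabla(E_M\varphi_M)(z)$ as the arithmetic average of the one-sided values of $\nabla\varphi_M|_T(z)$ over the triangles $T\in\cT$ containing $z$. At boundary vertices, impose homogeneous values consistent with $V=\hto$. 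For edge-midpoint normal-derivative degrees of freedom of the macro element, use $\fint_E\partial_\nu\varphi_M\ds$, which is well defined since that integral is independent of the side across $E$ by the Morley jump condition. Finally, add a $C^1$ interior bubble correction on each $K\in\cT$ to enforce the orthogonality $\varphi_M-E_M\varphi_M\perp P_0(\cT)$ in $L^2(\Omega)$; the bubble is supported in $K$ and so does not affect the degrees of freedom on $\partial K$.

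Property (c) is essentially built in: the choice of vertex values and edge averages of $\partial_\nu$ forces $I_M E_M\varphi_M=\varphi_M$ from the definition of $I_M$ in Lemma~\ref{Morley_Interpolation}, while the interior bubble correction leaves these Morley degrees of freedom untouched and delivers the $L^2$-orthogonality to $P_0(\cT)$. For (a), I would argue element by element. The difference $\varphi_M-E_M\varphi_M$ on $K$ depends only on the finitely many degrees of freedom on the patch $\omega_K$, and the construction reproduces any piecewise smooth $\varphi\in V$ that agrees with $\varphi_M$ at the Morley degrees of freedom up to the bubble correction. A standard scaling/Bramble--Hilbert argument then yields
\[
\sum_{m=0}^2 h_K^{2m}|\varphi_M-E_M\varphi_M|_{H^m(K)}^2
\lesssim h_K^{4}\,|\varphi_M|_{H^2(\cT(\omega_K))}^2,
\]
because the right-hand side controls all the DOF discrepancies introduced by the averaging.

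For (b), the first inequality follows from (a): on each $K$ the discrepancies in the gradient DOFs at vertices are jumps of $\nabla_{\text{pw}}\varphi_M$, and tangential integration along edges rewrites each such jump through a line integral of $[D^2\varphi_M]_E\tau_E$, so summing over $K$ and applying Cauchy--Schwarz gives the tangential-jump bound. The second inequality is a piecewise trace/inverse estimate identifying $\trinl\varphi_M-E_M\varphi_M\trinr_{\text{pw}}^2$ as the dominant contribution. The third and decisive inequality uses that for any $\varphi\in V$ the tangential jumps $[D^2\varphi]_E\tau_E$ vanish; hence $[D^2\varphi_M]_E\tau_E=[D^2(\varphi_M-\varphi)]_E\tau_E$, and a standard trace inequality on both triangles adjacent to $E$ converts $\sum_{E}h_E\|[D^2(\varphi_M-\varphi)]_E\tau_E\|_{L^2(E)}^2$ into $\Lambda\|D^2_{\text{pw}}(\varphi_M-\varphi)\|_{\lt}^2$; minimisation over $\varphi\in V$ completes (b).

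The main obstacle is the last chain in (b): reducing the nonconformity error measured through the enrichment to the best approximation in $V$. The key idea is the observation that tangential jumps of the Hessian vanish for conforming functions, so the jump seminorm is a genuine distance to $V$ up to a mesh-independent constant. Verifying this requires a careful local trace argument on edge patches and handling of boundary edges (where $\partial_\nu\varphi=0$ and $\varphi=0$ give analogous reductions); the interior-bubble correction must be shown to produce only lower-order contributions that are absorbed by the scaling estimate from (a).
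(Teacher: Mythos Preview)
The paper does not prove this lemma at all: it is stated with references \cite{BSZ2013,DG_Morley_Eigen} and a terminal \qed, so the result is imported wholesale. The only additional information the paper gives appears later in the proof of Lemma~\ref{lemmadiscreteembeddings}, where it is remarked that $E_M$ maps into the HCT space plus squared bubble functions. Your sketch is therefore not competing with a proof in the paper but is a reconstruction of the cited literature, and as such it is on the right track: HCT enrichment with vertex averaging of gradients, edge normal-derivative matching, and an interior bubble correction is exactly the construction in \cite{DG_Morley_Eigen}.

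A few points deserve tightening if you want the sketch to stand on its own. First, the HCT edge degree of freedom is the \emph{point value} of $\partial_\nu$ at $\mathrm{mid}(E)$, not the integral mean; to guarantee $I_ME_M\varphi_M=\varphi_M$ you must choose that midpoint value so that $\fint_E\partial_\nu(E_M\varphi_M)\,ds=\fint_E\partial_\nu\varphi_M\,ds$, which is a linear condition once the vertex DOFs are fixed. Second, the bubble must be a \emph{squared} interior bubble (so that it lies in $H^2_0(K)$), and you should check that the bubble correction does not disturb the edge integral of $\partial_\nu$ --- it does not, since $\partial_\nu b_K^2=0$ on $\partial K$. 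Third, your argument for the middle inequality in (b) is stated backwards: the chain in the lemma reads $\|h_\cT^{-2}(\cdot)\|^2\lesssim\text{jumps}\lesssim\trinl\cdot\trinr_{\rm pw}^2$, so you need trace/inverse estimates to bound the tangential jumps \emph{above} by the piecewise $H^2$ seminorm of the difference, not the other way around. With these adjustments your outline matches the cited construction.
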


The Sobolev embeddings for conforming functions depend on the domain 
$\Omega$, while their discrete counterparts for nonconforming functions require
particular attention.

\begin{lem}[discrete embeddings]\label{lemmadiscreteembeddings}
For any $1\le p<\infty$, there exists a constant $C=C(\Omega,p,\sphericalangle \cT)$ 
(which depends on $p$, $\Omega$, and the shape regularity of $\cT$) with
\[
\| \widehat{v} \|_{L^{\infty}(\Omega)}+\| \widehat{v} \|_{W^{1,p}(\cT)}\le C 
\trinl \widehat{v} \trinr_{\text{\rm pw}}\quad\text{for all }\widehat{v} \in H^2_0(\Omega)+  \cM(\cT).
\] 
\end{lem}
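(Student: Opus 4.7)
The plan is to reduce the claim to classical continuous Sobolev embeddings by splitting $\widehat v$ into a conforming approximation plus a piecewise small residual. First I would define the smoothing operator $E\widehat v:=E_M(I_M\widehat v)\in V$, composing the Morley interpolation of Lemma~\ref{Morley_Interpolation} (well-defined on $\widehat V$, since its degrees of freedom only require piecewise $H^2$ traces) with the conforming enrichment of Lemma~\ref{hctenrich}. Writing the residual $w:=\widehat v-E\widehat v=(\widehat v-I_M\widehat v)+(I_M\widehat v-E_M I_M\widehat v)$, I would apply Lemma~\ref{Morley_Interpolation}(b) piecewise to the first summand and Lemma~\ref{hctenrich}(a) to the second, using the identity $D^2_{\text{pw}}I_M\widehat v=\Pi_0 D^2_{\text{pw}}\widehat v$ from Lemma~\ref{Morley_Interpolation}(a) to bound $|I_M\widehat v|_{H^2(\mathcal T(\omega_K))}$ by $\|D^2_{\text{pw}}\widehat v\|_{L^2(\omega_K)}$. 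This yields the key local estimate
\[
\|h_K^{-2}w\|_{L^2(K)}+\|h_K^{-1}\nabla w\|_{L^2(K)}+\|D^2 w\|_{L^2(K)}\lesssim\|D^2_{\text{pw}}\widehat v\|_{L^2(\omega_K)}
\]
for every $K\in\mathcal T$, which after summation using the finite overlap of the patches $(\omega_K)$ also delivers $\trinl E\widehat v\trinr\lesssim\trinl\widehat v\trinr_{\text{pw}}$.

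For the conforming piece $E\widehat v\in H^2_0(\Omega)$ I would invoke the classical two-dimensional embedding $H^2(\Omega)\hookrightarrow C^0(\overline\Omega)\cap W^{1,p}(\Omega)$, valid for every $1\le p<\infty$, together with a Poincaré–Friedrichs inequality to conclude
\[
\|E\widehat v\|_{L^\infty(\Omega)}+\|E\widehat v\|_{W^{1,p}(\Omega)}\le C(\Omega,p)\,\trinl E\widehat v\trinr\lesssim\trinl\widehat v\trinr_{\text{pw}}.
\]
For the residual $w$ I would use instead a scaled Sobolev embedding on the reference triangle $\widehat K$: for any $v\in H^2(K)$,
\[
\|v\|_{L^\infty(K)}+h_K^{1-2/p}\|\nabla v\|_{L^p(K)}\lesssim h_K^{-1}\|v\|_{L^2(K)}+\|\nabla v\|_{L^2(K)}+h_K\|D^2 v\|_{L^2(K)}.
\]
Substituting the local bounds of the previous step produces $\|w\|_{L^\infty(K)}+h_K^{1-2/p}\|\nabla w\|_{L^p(K)}\lesssim h_K\|D^2_{\text{pw}}\widehat v\|_{L^2(\omega_K)}$. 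The $L^\infty$-bound then follows from $h_K\le\text{diam}(\Omega)$; raising the $L^p$-bound to the $p$th power and using (for $p\ge 2$) the elementary inequality $\|D^2_{\text{pw}}\widehat v\|^p_{L^2(\omega_K)}\le\trinl\widehat v\trinr_{\text{pw}}^{p-2}\|D^2_{\text{pw}}\widehat v\|^2_{L^2(\omega_K)}$ combined with finite overlap yields $\|\nabla w\|_{L^p(\mathcal T)}\lesssim\trinl\widehat v\trinr_{\text{pw}}$, while $p<2$ is reduced to the case $p=2$ by a Hölder estimate against $|\Omega|$.

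The hard part will be twofold. First, a naive decomposition $\widehat v=v+v_M$ with $v\in V$ and $v_M\in\mathcal M(\mathcal T)$ is not unique and carries no \emph{a priori} bound of $\trinl v\trinr+\trinl v_M\trinr_{\text{pw}}$ by $\trinl\widehat v\trinr_{\text{pw}}$ without invoking an open-mapping argument; this is precisely why $E_M\circ I_M$ must be applied directly to $\widehat v$ rather than to a chosen summand. Second, the residual $w|_K$ is \emph{not} polynomial, because $\widehat v|_K$ is merely $H^2(K)$ while $E_M I_M\widehat v|_K$ is piecewise polynomial only on an HCT/Argyris sub-triangulation, so that a polynomial inverse inequality is unavailable on $w$ and one is forced to use the scaled reference-element Sobolev embedding, tracking explicitly the dependence of its constant on $p$ in order to match the claim $C=C(\Omega,p,\sphericalangle\mathcal T)$.
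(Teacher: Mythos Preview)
Your argument is correct, but the paper takes a different and somewhat shorter route. The paper picks \emph{any} decomposition $\widehat v=v+v_M$ with $v\in V$ and $v_M\in\cM(\cT)$ and enriches only $v_M$, so that the residual $v_M-E_Mv_M$ is a piecewise polynomial of degree $\le 6$ and ordinary inverse estimates apply; the decomposition-independence you worried about is recovered not by open mapping but by the minimisation in Lemma~\ref{hctenrich}(b), choosing $\varphi=-v$ to get $\trinl v_M-E_Mv_M\trinr_{\text{pw}}\le\Lambda\trinl v+v_M\trinr_{\text{pw}}=\Lambda\trinl\widehat v\trinr_{\text{pw}}$. With the residual polynomial, the $W^{1,\infty}$ bound (and hence all $W^{1,p}$ bounds via $|\Omega|^{1/p}$) follows from a single inverse estimate combined with Lemma~\ref{hctenrich}(b), avoiding the reference-element scaling and the $\ell^p$ summation you carry out.

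Your approach, by contrast, applies $E_M\circ I_M$ to the whole $\widehat v$, which makes the residual $w$ non-polynomial and forces you to use scaled $H^2(\widehat K)\hookrightarrow L^\infty\cap W^{1,p}$ embeddings on each element. This is a perfectly valid alternative and is arguably more robust: it does not rely on the specific structure of $E_M$ (polynomial image and the minimisation property) and would work for any enrichment satisfying Lemma~\ref{hctenrich}(a). The price is a slightly longer bookkeeping of powers of $h_K$ and the explicit $p$-dependence in the reference-element constant. Your remark that the decomposition route ``carries no a~priori bound'' is therefore not quite right: it does, once one exploits the $\min_{\varphi\in V}$ in Lemma~\ref{hctenrich}(b).
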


\begin{proof}The main observation is that the enrichment operator $E_M$ from 
Lemma~\ref{hctenrich} maps into the HCT finite element space  plus squared 
bubble-functions  \cite{DG_Morley_Eigen}; 
so  $v_M-E_Mv_M$ is a piecewise polynomial of degree at most $6$ for
any $v_M\in \cM(\cT)$
(with respect to some refinement of $\cT$, where each triangle $T$ is divided into three 
sub-triangles by connecting each vertex with its center of inertia). This leads to 
inverse estimates such as 
\[
| v_M-E_Mv_M |_{W^{1,\infty}(T)}\lesssim h_T^{-1} | v_M-E_Mv_M |_{H^{1}(T)}
\lesssim h_T^{-2} \| v_M-E_Mv_M \|_{L^{2}(T)}.
\]
Lemma~\ref{hctenrich}.b shows for $v\in H^1_0(\Omega)$ that the right-hand side is controlled by
 \[
 \|   h_\cT^{-2} (v_M-E_Mv_M) \|_{L^{2}(\Omega)}\lesssim 
 \trinl v_M-E_Mv_M \trinr_{\text{pw}}\le
 \Lambda 
 \min_{\varphi\in V}\trinl v_M-\varphi \trinr_{\text{pw}}\le  \Lambda  \trinl v+v_M \trinr_{\text{pw}}.
 \]
Since $T\in \cT$ is arbitrary, this proves
\begin{equation}\label{eqnewccstra1}
| v_M-E_Mv_M |_{W^{1,\infty}(\Omega,\cT)}\lesssim\trinl v+v_M \trinr_{\text{pw}}.
\end{equation}
Since  $v_M-E_Mv_M$ is Lipschitz continuous with  Lipschitz constant 
$| v_M-E_Mv_M |_{W^{1,\infty}(\cT)} $ and
 vanishes at the vertices of $T\in\cT$,  
\[
|| v_M-E_Mv_M ||_{L^{\infty}(\Omega)}
\le h_{\max} | v_M-E_Mv_M |_{W^{1,\infty}(\Omega,\cT)}
\] 
holds for the maximal mesh-size 
$h_{\max} \le \text{diam}(\Omega)$.
This and \eqref{eqnewccstra1}    imply  (with $C_1\approx 1$)
\[
\| v_M-E_Mv_M \|_{L^{\infty}(\Omega)}\le  C_1 \trinl v+v_M\trinr_{\text{pw}}.
\] 
The boundedness of  the  continuous 2D Sobolev embedding 
$ H^2(\Omega)\hookrightarrow  L^{\infty} (\Omega) $ leads to  $\|\bullet \|_{L^{\infty}(\Omega)}
\le C_2\,  \trinl \bullet \trinr$ in $H^2_0(\Omega)$.
Consequently, with  a triangle inequality in the beginning, 
\begin{align*}
\|v+v_M \|_{L^{\infty}(\Omega)}  &\le
 \|v_M-E_M v_M \|_{L^{\infty}(\Omega)}+
 \| v+ E_Mv_M \|_{L^{\infty}(\Omega)}\\
& \le  C_1\trinl v+v_M\trinr_{\text{pw}}+  C_2  \trinl v+ E_M v_M \trinr.
\end{align*}
The  triangle inequality and Lemma~\ref{hctenrich}.b (again with $\varphi=-v$) show
\begin{equation}\label{eqnewccstra3}
\trinl v+E_M v_M\trinr \le  \trinl v+v_M \trinr_{\text{pw}}+ \trinl v_M -E_Mv_M \trinr_{\text{pw}}
\lesssim  \trinl v+v_M \trinr_{\text{pw}}. 
\end{equation}
The combination of \eqref{eqnewccstra3} with the previously displayed estimate shows 
the first assertion 
$\|v+v_M \|_{L^{\infty}(\Omega)} \lesssim  \trinl v+v_M \trinr_{\text{pw}}$.
The proof of the second assertion is similar with 
\eqref{eqnewccstra1}-\eqref{eqnewccstra3}.  
The boundedness of  the  continuous 2D Sobolev embedding 
$ H^2(\Omega)\hookrightarrow W^{1,p}(\Omega) $ leads to 
$   | \bullet  |_{W^{1,p}(\Omega)}\le C(p,\Omega)\,  \trinl \bullet \trinr$ in $H^2_0(\Omega)$.
Consequently,
\begin{align*}
| v+v_M |_{W^{1,p}(\Omega,\cT)} &\le  | v+E_M v_M  |_{W^{1,p}(\Omega)}
+| v_M -E_Mv_M|_{W^{1,p}(\Omega,\cT)}\\
&\le C(p,\Omega)   \trinl v+E_M v_M \trinr+ |\Omega|^{1/p}
| v_M -E_Mv_M|_{W^{1,\infty}(\Omega,\cT)}
\end{align*}
with the area $|\Omega|\approx 1\approx C(p,\Omega)$. 
Recall  \eqref{eqnewccstra1}  and \eqref{eqnewccstra3} in the end to control the 
previous upper bound in terms of
$ \trinl v+E_M v_M \trinr+ \trinl  v+ v_M \trinr_{\text{pw}}\lesssim 
 \trinl  v+ v_M \trinr_{\text{pw}}$. This concludes the proof of the second assertion
 $| v+v_M |_{W^{1,p}(\Omega,\cT)}\lesssim  \trinl  v+ v_M \trinr_{\text{pw}}$.
\end{proof}

\begin{rem}[boundedness]\label{remarkccnew12324boundedness}
The bound for  $a_{\text{pw}}$ is immediate from 
\eqref{eqccnerwandlast1234a} for the norm  $\trinl\bullet\trinr_{\text{pw}}$
in $ \widehat{V}\equiv V+ \cM(\cT)$.  The bound   
$\| \Gamma_{\text{pw}}  \| =\sqrt{2} C(\Omega,4,\sphericalangle \cT)^2 $
in
\[
|  \Gamma_{\text{pw}}(\widehat \eta,\widehat{ \chi},\widehat{\phi})|\le 
 \| \Gamma_{\text{pw}}  \|
  \trinl\widehat \eta \trinr_{\text{pw}}
 \trinl\widehat{ \chi} \trinr_{\text{pw}}
 \trinl\widehat\phi \trinr_{\text{pw}}
 \quad\text{for all } \widehat \eta , \widehat{ \chi}, \widehat{\phi}\in 
  \widehat{V}\equiv V+ \cM(\cT)
\]
follows  from \eqref{eqccnerwandlast1234b} 
with H\"older inequalities and Lemma~\ref{lemmadiscreteembeddings}.
\end{rem}

\begin{lem}[\cite{BSZ2013}]\label{EnrichSmooth} For $1/2<s\le 1$ there 
exists a positive constant $C$ such that any $\eta\in H^{2+s}(\Omega)$
and  $\varphi_M\in \cM(\cT)$ satisfy 
$\displaystyle
a_{\text{pw}}(\eta,\varphi_M-E_M\varphi_M)\leq Ch_{\max}^{s}\|\eta\|_{H^{2+s}(\Omega)}\trinl \varphi_M\trinr_{\text{\rm pw}}.
$ \qed
\end{lem}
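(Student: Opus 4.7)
The plan rests on combining two structural facts: first, the Morley interpolation $I_M$ has the projection property $D^2_{\text{pw}} I_M = \Pi_0 D^2$ from Lemma~\ref{Morley_Interpolation}(a); second, the enrichment operator satisfies $I_M E_M \varphi_M = \varphi_M$ from Lemma~\ref{hctenrich}(c). Applying $I_M$ to $w := \varphi_M - E_M \varphi_M$ gives $I_M w = \varphi_M - \varphi_M = 0$. Combined with $D^2_{\text{pw}} I_M w = \Pi_0 D^2_{\text{pw}} w$, this yields the key orthogonality
\[
\Pi_0 D^2_{\text{pw}}(\varphi_M - E_M \varphi_M) = 0 \quad \text{in } L^2(\Omega; \mathbb{R}^{2\times 2}).
\]

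Given this orthogonality, I would insert $\Pi_0 D^2 \eta$ for free into the bilinear form:
\begin{align*}
a_{\text{pw}}(\eta, \varphi_M - E_M \varphi_M) &= \int_\Omega D^2 \eta : D^2_{\text{pw}}(\varphi_M - E_M \varphi_M)\dx \\
&= \int_\Omega (I-\Pi_0) D^2 \eta : D^2_{\text{pw}}(\varphi_M - E_M \varphi_M)\dx,
\end{align*}
and bound the result by Cauchy--Schwarz as
\[
\|(I-\Pi_0) D^2 \eta\|\; \trinl \varphi_M - E_M \varphi_M\trinr_{\text{pw}}.
\]

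For the first factor I would invoke the standard fractional approximation estimate for the piecewise $L^2$-projection on shape-regular meshes, $\|(I-\Pi_0)g\|_{L^2(K)}\lesssim h_K^s |g|_{H^s(K)}$ valid for $0<s\le 1$, applied componentwise to $g := D^2\eta \in H^s(\Omega;\mathbb{R}^{2\times 2})$, to obtain
\[
\|(I-\Pi_0)D^2\eta\|\lesssim h_{\max}^{s}\|\eta\|_{H^{2+s}(\Omega)}.
\]
For the second factor I would apply Lemma~\ref{hctenrich}(b) with the trivial competitor $\varphi = 0 \in V$:
\[
\trinl \varphi_M - E_M\varphi_M\trinr_{\text{pw}} \le \Lambda^{1/2}\min_{\varphi\in V}\trinl \varphi_M - \varphi\trinr_{\text{pw}} \le \Lambda^{1/2}\trinl \varphi_M\trinr_{\text{pw}}.
\]

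Multiplying the two bounds yields the asserted estimate. No step is delicate: the only subtlety is noting that both the projection property of $I_M$ and the left-inverse property of $E_M$ are needed to produce the orthogonality $\Pi_0 D^2_{\text{pw}} w = 0$, after which the proof reduces to a single Cauchy--Schwarz inequality plus a fractional Bramble--Hilbert estimate on piecewise constants.
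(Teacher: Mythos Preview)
Your proof is correct. The paper itself does not supply a proof of this lemma: it merely cites \cite{BSZ2013} and closes with a \qed. Your argument is a clean, self-contained derivation from the ingredients already assembled in the paper, namely the projection identity $D^2_{\text{pw}} I_M = \Pi_0 D^2$ of Lemma~\ref{Morley_Interpolation}(a), the left-inverse property $I_M E_M = \mathrm{id}$ of Lemma~\ref{hctenrich}(c), and the stability bound of Lemma~\ref{hctenrich}(b). The orthogonality $\Pi_0 D^2_{\text{pw}}(\varphi_M - E_M\varphi_M) = 0$ you extract is exactly the right mechanism, and the fractional Bramble--Hilbert estimate $\|(I-\Pi_0)g\|_{L^2(K)} \lesssim h_K^s |g|_{H^s(K)}$ together with the subadditivity $\sum_{K\in\cT} |g|_{H^s(K)}^2 \le |g|_{H^s(\Omega)}^2$ of the Slobodeckij seminorm finishes the job. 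As a minor observation, your argument actually works for all $0 < s \le 1$; the restriction $s > 1/2$ in the statement reflects the needs of the surrounding applications (Sobolev embeddings), not a limitation of this lemma.
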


{\it Proof of Theorem~\ref{apriori_NS_est_NCFEM}}. 
Set $X=Y=V$, $X_h=Y_h=V_h$, $\widehat{X}=V+V_h$, $\widehat{a}(\bullet,\bullet):=a_{\text{pw}}(\bullet, \bullet)$, $ \widehat{b}(\bullet,\bullet):=\Gamma_{\text{pw}}( u,\bullet,\bullet)+\Gamma_{\text{pw}}(\bullet,u,\bullet)$ and $P=I_M$, $Q={\mathcal C}= E_M$. The regularity $u\in H^{2+s}(\Omega)$ of Subsection~\ref{subsectContinuousproblemNS} with $s>1/2$ allows for  the 
bounded global Sobolev embeddings   
$H^{2+s}(\Omega)$ $\hookrightarrow W^{2,4}(\Omega)\hookrightarrow W^{1,\infty}(\Omega)$.
This and   Lemma~\ref{lemmadiscreteembeddings}
lead for $\widehat{\theta}\in \widehat{X}$ and $\phi\in H^1(\Omega)$ to
\begin{align}
|\Gamma_{\text{pw}}(u,\widehat{\theta},\phi)|+|\Gamma_{\text{pw}}(\widehat{\theta},u,\phi)|
& \lesssim \left(  \|u\|_{W^{2,4}(\Omega)}\|\widehat{\theta}\|_{W^{1,4}(\Omega,\cT)}
+ \| u\|_{W^{1,\infty}(\Omega)}\ensuremath{| \!| \! |}\widehat{\theta}\ensuremath{| \!| \! |}_{\text{pw}}
\right)\|\phi\|_{H^1(\Omega)}  \notag\\
& \lesssim\|u\|_{H^{2+s}(\Omega) }\ensuremath{| \!| \! |}\widehat{\theta}\ensuremath{| \!| \! |}_{\text{pw}}\|\phi\|_{H^1(\Omega)}. \label{Gamma2bdd}
\end{align}
For $\theta_M\in \cM(\cT)$ with $\trinl\theta_M\trinr_{\text{pw}}=1$, the aforementioned estimates imply that $\widehat{b}(\theta_M,\bullet)\in H^{-1}(\Omega)$ 
and so the solution $z\in V$ to the biharmonic problem
\begin{equation*}
a(z,\phi)=\widehat{b}(\theta_M,\phi)\fl \phi\in V
\end{equation*}
satisfies $z\in H^{2+s}(\Omega)$ and $\| z \|_{H^{2+s}(\Omega)}\lesssim 1$
\cite{BlumRannacher}.
The regularity  $z\in H^{2+s}(\Omega)$ and Lemma~\ref{EnrichSmooth} 
(resp. Lemma~\ref{Morley_Interpolation}) 
imply  {\bf (H1)}  (resp.  {\bf (H2)}) 
with $\delta_1\lesssim h_{\rm max}^s$ (resp. $\delta_2\lesssim h_{\rm max}^s$).
The estimate \eqref{Gamma2bdd} and Lemma~\ref{hctenrich} verify {\bf (H3)} with 
$\delta_3\lesssim h_{\rm max}$.  Lemma~\ref{hctenrich}.b leads to {\bf (H4)}
with $\Lambda_4=\Lambda$. For any $y_M\in M(\cT)$ with 
$\ensuremath{| \!| \! |}y_M\ensuremath{| \!| \! |}_{\text{pw}}=1$, 
Lemma~\ref{EnrichSmooth} guarantees 
\[
a_{\text{pw}}(u, y_M-E_My_M)\lesssim h_{\max}^s \| u\|_{H^{2+s}(\Omega)}\approx h_{\max}^s,
\]
while  Lemma~\ref{hctenrich} shows 
\[
F(y_M-E_My_M)\lesssim h_{\max}^2\| f\|\lesssim  h_{\max}^s.
\] 
This implies  {\bf (H5)} with $\delta_5\lesssim  h_{\max}^s$. 
Choose $x_h=I_M u$ so that {\bf (H6)} holds with $\delta_6\lesssim h_{\rm max}^{s}$.

In conclusion,  for sufficiently small mesh-size $h_{\max}$, 
the discrete inf-sup inequality of Theorem~\ref{dis_inf_sup_thm} holds  with $\beta_h\ge \beta_0>0$. 
Moreover, Theorems~\ref{thm3.1} and \ref{err_apriori_thm_vke} apply and prove
{\bf (A)}-{\bf (C)}. To compute ${\rm apx}(\cT)=\| \widehat{N}(u)\|_{M(\cT)^*} $,  let 
$\phi_M \in M(\cT)$ satisfy  $\ensuremath{| \!| \! |}\phi_M\ensuremath{| \!| \! |}_{\text{pw}}=1$
and ${\rm apx}(\cT) = \widehat{N}(u; \phi_M) $. Since $N(u, E_M\phi_M)=0$, 
the difference 
$\widehat{\psi}
:=\phi_M-E_M \phi_M\in\widehat{V}$ satisfies 
\begin{align*}
{\rm apx}(\cT)  &= \widehat{N}(u;\widehat{\psi} ) 
= {a}_{\text{\rm pw}} (u- I_M u, \widehat{\psi})-F((1-\Pi_0)\widehat{\psi})+
\Gamma_{\text{\rm pw}}(u,u,\widehat{\psi}) 
\end{align*}
with  Lemma~\ref{hctenrich}.c for ${a}_{\text{\rm pw}} (I_M u, \widehat{\psi})=0$ and
$\Pi_0 \widehat{\psi}=0$ a.e. in the last step.
This, the finite overlap of $(\omega_K:K\in\cT)$ in Lemma~\ref{hctenrich}.a and \ref{hctenrich}.b 
 for $\ensuremath{| \!| \! |}\widehat{\psi}\ensuremath{| \!| \! |}_{\text{pw}}\lesssim 1$ 
lead to \eqref{eqnapriori_NS_est_NCFEM}.
 \qed

\subsection{A posteriori error estimate}
For any $K\in\cT$ and $E\in\cE$, define the volume and edge error estimators  by 
\begin{align*}
&\eta_K^2:=h_K^4\|{\rm curl}(-\Delta u_M\nabla u_M)-f\|_{L^2(K)}^2\text{ and }\\
&\eta_E^2 :=h_E\|\left[D^2 u_M\right]_E\tau_E\|_{L^2(E)}^2
+h_E^{3}\|[\Delta u_M\nabla u_M]_E\cdot\tau_E\|_{L^2(E)}^2
+h_E^{3} \|\{\Delta u_M\nabla u_M\}_E\cdot\tau_E\|_{L^2(E)}^2.
\end{align*}
Here and throughout this section, the average of $\widehat{\phi}\in\widehat{X}$ across the interior edge $E=\partial K_+\cap \partial K_-\in \cE(\Omega)$ shared by two triangles $K_\pm\in\cT$  reads
$\{\widehat{\phi}\}_E:=(\widehat{\phi}|_{K_+}+\widehat{\phi}|_{K_-})/2$,
while $\{\widehat{\phi}\}_E:=\widehat{\phi}|_E$ along any boundary edge $E\in \cE(\partial\Omega)$.

\begin{thm}[a posteriori]\label{reliability_NS_NC}
If $u\in V$ is a regular solution to \eqref{NS_weak}, then there exist positive $\delta,\epsilon$, and   $C_{\rm rel}$ 
such that, for any $\cT\in\bT(\delta)$, the discrete solution $u_{M}\in \cM(\cT)$ to \eqref{NS_dis_NC} 
with $\trinl  u-u_M\trinr_{\text{\rm pw}}\le \epsilon$  satisfies
\begin{align*}
C_{\rm rel}^{-2}\trinl u-u_{M}\trinr_{\text{\rm pw}}^2&\leq \sum_{K\in\cT}\eta_K^2+\sum_{E\in\cE}\eta_E^2.
\end{align*}
\end{thm}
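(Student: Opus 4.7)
The plan is to reduce the claim to the abstract reliability estimate of Corollary~\ref{coraposteriori} and then to bound the residual norm explicitly. Apply the corollary with $v_h=u_M$, spaces $\widehat{X}=\widehat{Y}=V+\cM(\cT)$, and the enrichment operator $Q=\cC=E_M$ from Lemma~\ref{hctenrich}, which verifies {\bf (H4)} with $\Lambda_4=\Lambda$ as in the proof of Theorem~\ref{apriori_NS_est_NCFEM}. Provided $\epsilon$ is chosen so that $\epsilon(1+\Lambda)\le\kappa\beta/\|\Gamma\|$ for some $\kappa<1$, the corollary yields
\[
\trinl u-u_M\trinr_{\text{\rm pw}}\lesssim\|\widehat{N}(u_M)\|_{V^*}+\trinl u_M-E_M u_M\trinr_{\text{\rm pw}}.
\]
Lemma~\ref{hctenrich}.b dominates the second term on the right by $\sum_E h_E\|[D^2 u_M]_E\tau_E\|_{L^2(E)}^2$, which is the first edge contribution to $\eta_E^2$, so the remaining task is to bound $\|\widehat{N}(u_M)\|_{V^*}$ by the volume estimators $\eta_K$ and the two nonlinear edge estimators.

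To compute $\|\widehat{N}(u_M)\|_{V^*}$, test with an arbitrary $\phi\in V$ normalised by $\trinl\phi\trinr=1$ and subtract the identically zero discrete residual $N_h(u_M;I_M\phi)=0$, so that with $\psi:=\phi-I_M\phi$,
\[
\widehat{N}(u_M;\phi)=a_{\text{\rm pw}}(u_M,\psi)-F(\psi)+\Gamma_{\text{\rm pw}}(u_M,u_M,\psi).
\]
The bilinear piece vanishes because $D^2 u_M$ is piecewise constant and $D^2_{\text{\rm pw}}I_M\phi=\Pi_0 D^2\phi$ by Lemma~\ref{Morley_Interpolation}.a. Two elementwise integrations by parts on the trilinear form, as in \eqref{2}, convert $\Gamma_{\text{\rm pw}}(u_M,u_M,\psi)-F(\psi)$ into a sum of volume residuals $\int_K({\rm curl}(-\Delta u_M\nabla u_M)-f)\psi\dx$ and edge contributions $\int_E[\psi\,(\Delta u_M\nabla u_M)\cdot\tau_E]_E\ds$; the jump-product identity $[\chi v]_E=\{\chi\}_E[v]_E+[\chi]_E\{v\}_E$ then splits the edge contributions into a piece pairing $\{\psi\}_E$ with $[\Delta u_M\nabla u_M]_E\cdot\tau_E$ and a piece pairing $[\psi]_E$ with $\{\Delta u_M\nabla u_M\}_E\cdot\tau_E$, matching the remaining two entries of $\eta_E^2$.

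All three terms are then estimated by Cauchy inequalities. The approximation estimate $\|\psi\|_{L^2(K)}\lesssim h_K^2\|D^2\phi\|_{L^2(K)}$ from Lemma~\ref{Morley_Interpolation}.b, together with a scaled trace inequality yielding $\|\psi\|_{L^2(E)}\lesssim h_E^{3/2}\|D^2\phi\|_{L^2(T_\pm)}$ on each of the adjacent triangles, controls both $\|\{\psi\}_E\|_{L^2(E)}$ and $\|[\psi]_E\|_{L^2(E)}$; summing with finite overlap of the patches and invoking $\trinl\phi\trinr=1$ reproduces $\eta_K$ and the two remaining entries of $\eta_E^2$ with mesh-independent constants. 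The main obstacle is to ensure that the smallness hypothesis required by Corollary~\ref{coraposteriori}, namely $\|u-E_M u_M\|_V<\beta/\|\Gamma\|$, is guaranteed by $\trinl u-u_M\trinr_{\text{\rm pw}}\le\epsilon$; this follows from a triangle inequality combined with {\bf (H4)}, which gives $\|u-E_M u_M\|_V\le(1+\Lambda)\epsilon$ and fixes the admissible range of $\epsilon$ in the statement.
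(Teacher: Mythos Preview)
Your proof is correct and follows essentially the same route as the paper's own argument. The only difference is organisational: you invoke Corollary~\ref{coraposteriori} to obtain the bound in terms of $\|\widehat{N}(u_M)\|_{V^*}$ directly, whereas the paper cites Theorem~\ref{abs_res_thm} (giving $\|N(E_M u_M)\|_{V^*}$) and then redoes the Lipschitz step inline to pass to $\widehat{N}(u_M;\phi)$ --- your version is slightly cleaner. One small imprecision: Lemma~\ref{hctenrich}.b as stated gives $\|h_\cT^{-2}(u_M-E_M u_M)\|^2\lesssim\sum_E h_E\|[D^2 u_M]_E\tau_E\|^2\lesssim\trinl u_M-E_M u_M\trinr_{\text{pw}}^2$, so the inequality runs the wrong way for your claim; you first need the piecewise inverse estimate $\trinl u_M-E_M u_M\trinr_{\text{pw}}\lesssim\|h_\cT^{-2}(u_M-E_M u_M)\|$ (valid since $u_M-E_M u_M$ is piecewise polynomial), exactly as the paper does in \eqref{eqccverlateinMayno33}.
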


\begin{proof} 
Let $u_M$ be the solution to \eqref{NS_dis_NC} close to $u$ and  
apply Theorem~\ref{abs_res_thm} with  
$X=Y=V,$ $X_h=Y_h=V_h$,  $v_h= u_M$, and $Q:=E_M$ from Lemma~\ref{hctenrich}. 
Suppose that  $\epsilon,\delta$ satisfy Theorem~\ref{apriori_NS_est_NCFEM} and,
if necessary, are chosen smaller
such that, for any $\cT\in\bT(\delta)$, exactly one discrete solution
$u_{\rm M}\in X_M$  to \eqref{NS_dis_NC} satisfies  $\trinl  u-u_{ M}  \trinr_{\text{\rm pw}} \le \epsilon
 \le \beta/( 2(1+\Lambda )\|\Gamma
 \|)$.  Lemma \ref{hctenrich}.b implies
$
\trinl   u_M-E_M u_M  \trinr_{\text{\rm pw}}  
\le \Lambda \trinl   u -u_M \trinr_{\text{\rm pw}}
\le  \Lambda \epsilon 
$.
This and  triangle inequalities show
\begin{align*}
\trinl E_M u _M\trinr + \trinl u_M\trinr_{\text{pw}} &\le 
 \trinl u_M-E_M u_M \trinr_{\text{pw}} + 2  \trinl u_M\trinr_{\text{pw}}
\le  2 \trinl u \trinr +(2+ \Lambda)\epsilon=:M;
\\
\trinl u- E_Mu_M\trinr &\le  \trinl  u- u_M\trinr_{\text{pw}}+ \trinl  u_M- E_M u_M\trinr_{\text{pw}}
\le (1+\Lambda)\epsilon
\le  \beta/( 2\|\Gamma
\|).
\end{align*}
Consequently,  the abstract residual \eqref{relib_eqn} in Theorem~\ref{abs_res_thm} implies
\begin{align}\label{eqccforold4.14}
\trinl u- u_M\trinr_{\text{pw}}\le 2\beta^{-1} \|N(E_M u_M)\|_{ V^*}
+\trinl  u_M- E_M u_M\trinr_{\text{pw}}.
\end{align}
There exists some  $\phi\in V$ with $\trinl \phi\trinr=1$ and
\begin{align*}
&\|N(E_Mu_M)\|_{V^*}= N(E_M u_M;\phi)= a(E_M u_M,\phi)-F(\phi)
+\Gamma(E_M u_M,E_M u_M,\phi)\notag\\
&=\widehat{N}( u_M;\phi)+ a_{\text{pw}}(E_M u_M- u_M,\phi)+\Gamma(E_M u_M,E_M u_M,\phi)
-\Gamma_{\text{pw}}( u_M, u_M,\phi)
\end{align*}
with the definition of $N$ and of   $\widehat{N}$. This,
the  bound  of  $a_{\text{pw}}$,    elementary arguments  with the trilinear form
and its bound $\|\Gamma_{\rm pw}\|$ from Remark~\ref{remarkccnew12324boundedness},   
and $M$ prove
\begin{align} \label{eqccforold4.14b}
\|N(E_M u_M)\|_{ V^*}\le \widehat{N}( u_M;\phi)+(1+ M \|\Gamma_{\text{\rm pw}}\|)
\trinl u_M- E_M  u_M\trinr_{\text{\rm pw}}.
\end{align} 
Since  $u_M$  solves  \eqref{NS_dis_NC},  
$N_h( u_M;\phi)=N_h( u_M;\chi)$ holds for $\chi:=\phi-I_M \phi$ with 
the  Morley interpolation $ I_M \phi$ of $\phi$. 
Since Lemma~\ref{Morley_Interpolation}.a  implies $a_{\text{pw}}( u_{M},\phi-I_M\phi)=0$, 
an integration by parts in the nonlinear term $\Gamma_{\text{pw}}(\bullet,\bullet,\bullet)$ leads to
\begin{align*}
\widehat{N}( u_M;\phi)&=\sum_{K\in\cT}\int_K \Delta u_M\nabla u_M\cdot{\rm curl}\chi\dx
-F(\chi)\notag\\
&=\sum_{K\in\cT}\int_K ({\rm curl}(-\Delta u_M\nabla u_M)-f)\chi\dx
+\sum_{E\in\cE}\int_{E} [\Delta u_M\nabla u_M\cdot\tau_E]_E\{\chi\}_E\ds\notag\\
&\quad+\sum_{E\in\cE}\int_{E} \{\Delta u_M\nabla u_M\}_E\cdot\tau_E
\; [\chi]_E\ds. 
\end{align*}
This and standard arguments with Cauchy and trace inequalities
plus  Lemma~\ref{Morley_Interpolation}.b with $\trinl \phi\trinr=1$ eventually lead to
some constant $C_A\approx 1$ with
\begin{align}\label{eqccverlateinMayno33extra}
C_A ^{-2} \widehat{N}( u_M;\phi)^2 \le  \sum_{K\in\cT}\eta_K^2+\sum_{E\in\cE}\eta_E^2.
\end{align}
Piecewise inverse estimates  
$\trinl u_M- E_M  u_M\trinr_{\text{pw}}\lesssim \|  h_\cT^{-2} ( u_M- E_M  u_M )\|_{L^2(\Omega)}$
and  Lemma~\ref{hctenrich}.b with the tangential jump residuals 
lead to some constant $C_B\approx 1$ with 
\begin{align}\label{eqccverlateinMayno33}
 C_B ^{-2} \trinl u_M- E_M  u_M\trinr_{\text{pw}}^2 \le
\sum_{E\in\cE} h_E \|[D^2 u_M]_E\tau_E\|_{L^2(E)}^2.
\end{align}
This is bounded by $\sum_{E\in\cE}\eta_E^2$.
The combination of  \eqref{eqccforold4.14}-\eqref{eqccverlateinMayno33}
concludes the proof with 
 $C_{\rm rel}=   2 \beta^{-1} C_A+  (1+ 2\beta^{-1}(1+M \|\Gamma _{\text{pw}}  \|)) C_B $.
\end{proof}

\begin{rem}[residuals develop correct convergence rate]
The efficiency of the estimator remains  as an open question  owing to the average term 
$\left\|\{\Delta u_M\nabla u_M\cdot\tau_E\}_E\right\|_{L^2(E)}$ in $\eta_E$ 
(for the remaining contributions
are efficient). 
The sum of all those contributions associated to those terms,  however, converge
 (at least) with linear rate in  that
 \begin{equation}\label{extrahigherorderremarkapostccnew1}
 S:= (\sum_{E\in\cE}h_E^{3}\left\|\{\Delta u_M\nabla u_M\cdot\tau_E\}_E\right\|_{L^2(E)}^2
 )^{1/2}  \lesssim  h_{\max}  \| u \|_{H^{2+s}(\Omega)} \trinl u_M\trinr_{\text{pw}}
 =O(h_{\max}).
 \end{equation}
 Before a sketch of the proof concludes this remark, it should be stressed that 
 \eqref{extrahigherorderremarkapostccnew1}  can be 
 a higher-order term:  Consider a uniform mesh in a singular situation with 
 re-entering corners (with  an exact solution of reduced regularity  
 $u\notin H^3(\Omega) $ \cite{BlumRannacher}) with a  suboptimal convergence rate
$s<1$. Then   $S$ in  \eqref{extrahigherorderremarkapostccnew1} is of higher-order. 

The proof of \eqref{extrahigherorderremarkapostccnew1} starts with a  triangle inequality 
\[
S^2 
 \le  \sum_{T\in \cT}\sum_{E\in \cE(T)}
 h_E^{3}\left\| (\Delta u_M\nabla u_M) |_T\right\|_{L^2(E)}^2.
\]
The discrete trace inequality  (i.e. a trace inequality followed by an inverse inequality) 
for each summand shows
 \[
 h_E^{3}\left\| (\Delta u_M\nabla u_M) |_T\right\|_{L^2(E)}^2
 \lesssim h_E^{2}\left\| \Delta u_M\nabla u_M\right\|_{L^2(T)}^2.
 \] 
Recall  the piecewise constant mesh-size  $h_{\cT}\in P_0(\cT)$,
$h_{\cT}|_T:=h_T:=\text{diam}(T)   $ for $T\in\cT$, with maximum
$h_{\max}:=\max h_\cT  \le\delta$.
The shape regularity of $\cT$ shows
\[
S\lesssim \left\|  h_\cT  \Delta_{\text{pw}} u_M\nabla_{\text{pw}} u_M\right\|_{L^2(\Omega)}
\le  \left\|  h_\cT \Delta u \nabla_{\text{pw}} u_M\right\|_{L^2(\Omega)} +
 \left\|  h_\cT  \Delta_{\text{pw}} (u-u_M)\nabla_{\text{pw}} u_M\right\|_{L^2(\Omega)}
\]
with a triangle inequality in the last step.
Recall that $u\in H^{2+s}(\Omega)$ for $s>1/2$ enables 
the bounded embedding $H^s(\Omega)\hookrightarrow L^{2p}(\Omega)$
for any $p$ with $1<p<1/(1-s)$.
This and a H\"older inequality with $1/p+1/p'=1$ leads to
\[
  \left\| \Delta u \nabla_{\text{pw}} u_M\right\|_{L^2(\Omega)} \le 
    \| \Delta u \|_{L^{2p}(\Omega)}
      \left\| \nabla_{\text{pw}} u_M\right\|_{L^{2p'}(\Omega)}.
\]
Lemma~\ref{lemmadiscreteembeddings} shows that the last term is controlled 
by  $\trinl u_M\trinr_{\text{pw}}$. Consequently,
\[
 \left\| h_\cT \Delta u \nabla_{\text{pw}} u_M\right\|_{L^2(\Omega)}
 \lesssim h_{\max} 
  \| u \|_{H^{2+s}(\Omega)}\trinl u_M\trinr_{\text{pw}}. 
\]
The analysis of the second term  starts with $0<r\le s\le 1$ and 
the elementary observation 
\[
\left\|  h_\cT  \Delta_{\text{pw}} (u-u_M)\nabla_{\text{pw}} u_M\right\|_{L^2(\Omega)}
\le  \sqrt{2} h_{\max}^{1-r}   \trinl u-u_M\trinr_{\text{pw}} 
| h_\cT^r \,  u_M |_{W^{1,\infty}(\Omega,\cT)}.
\]
The  asserted convergence rate follows  with
 $\trinl u-u_M\trinr_{\text{pw}}\lesssim  h_{\max}^s \| u \|_{H^{2+s}(\Omega)}$. The maximum of the remaining term 
$ | h_\cT^r \,  u_M |_{W^{1,\infty}(\Omega,\cT)}=h_T^r |   u_M |_{W^{1,\infty}(T)}$
is attained for (at least) one $T\in\cT$. An inverse inequality and 
Lemma~\ref{lemmadiscreteembeddings}  in the end show
\[
h_T^r |   u_M |_{W^{1,\infty}(T)}\lesssim | u_M |_{W^{1,2/r}(T)} \le 
| u_M |_{W^{1,2/r}(\Omega,\cT)}\lesssim  \trinl u_M\trinr_{\text{pw}}. 
\]
Consequently, 
$\left\|  h_\cT  \Delta_{\text{pw}} (u-u_M)\nabla_{\text{pw}} u_M\right\|_{L^2(\Omega)}
\lesssim  h_{\max}^{1+s-r} \trinl u_M\trinr_{\text{pw}}$.
The combination of the previous estimates proves  \eqref{extrahigherorderremarkapostccnew1}. 
\qed 
\end{rem}

\begin{rem}[no efficiency analysis]
The lack of local efficiency  is part of a  more general structural 
difficulty. Whenever volume terms require a piecewise integration by parts with Morley  
finite element test functions, there arise average 
terms like $\{\widehat{\phi}\}_E$ in Theorem~\ref{reliability_NS_NC}, which are not residuals.
This prevents an efficiency analysis in this section as well as in \cite[Adini FEM]{CarstensenGallistlHu2013} or 
\cite[Subsect 7.8]{Gallistl2014Adaptive}. It is left as an open problem for future research and may cause 
a modification of the discrete scheme. In the vibration of a biharmonic plate or in the von K\'{a}rm\'{a}n
equations of the subsequent section, this difficulty does not arise.
\end{rem}

\section{Von K\'{a}rm\'{a}n equations}\label{Preli}
Given a load function $f\in L^{2}(\Omega)$, the von K\'{a}rm\'{a}n equations model the deflection of a very thin elastic plate with vertical displacement $u\in\hto$ and the Airy stress function $v\in\hto$ such that
\begin{equation}\label{vke}
\Delta^2 u =[u,v]+f \text{ and }\Delta^2 v =-\half[u,u] \text{ in } \Omega.
\end{equation}
With  the co-factor matrix  $\cof(D^2 v )$  of $D^2 v $, the von K\'{a}rm\'{a}n brackets read
\begin{equation*}
[u,v]:=\frac{\partial^2 u}{\partial x_1^2}\frac{\partial^2 v}{\partial x_2^2}
+\frac{\partial^2 u}{\partial x_2^2}\frac{\partial^2 v }{\partial x_1^2}
-2\frac{\partial^2 u}{\partial x_1\partial x_2}\frac{\partial^2 v}{\partial x_1\partial x_2}=\cof(D^2 u):D^2 v.
\end{equation*}

\subsection{Continuous problem}
The  weak formulation  of  the von K\'{a}rm\'{a}n equations \eqref{vke}
seeks  $u,v\in V:=H^2_0(\Omega)$ with
\begin{subequations}\label{wform}
\begin{align}
a(u,\varphi_1)+ \gamma(u,v,\varphi_1)+\gamma(v,u,\varphi_1)&=
(f,\varphi_1)_{L^2(\Omega)}   \fl\varphi_1\in V\label{wforma}\\
a(v,\varphi_2)-\gamma(u,u,\varphi_2)   &=0            \fl\varphi_2 \in V.
\label{wformb}
\end{align}
\end{subequations} 
Here and throughout this section  abbreviate, for all $ \eta,\chi,\varphi\in V$, 
\[
a(\eta,\chi):=\integ D^2 \eta:D^2\chi\dx  \quad\text{and} \quad 
 \gamma(\eta,\chi,\varphi):=-\half\integ [\eta,\chi]\varphi\dx.
\]
The abstract theory of Sections~\ref{infsup}-\ref{error} applies for the real
Hilbert space  $X:=V\times V$  with its dual $X^*$ to  the operator $ N:  X\to  X^*$  defined by
\begin{align}
N({\boldmath\Psi};{\boldmath\Phi}):=\langle  N({\boldmath\Psi}), \Phi\rangle:= 
A({\boldmath\Psi},{\boldmath\Phi})-F({\boldmath\Phi})+\Gamma({\boldmath\Psi},{\boldmath\Psi},{\boldmath\Phi})
\end{align}
for all ${\boldmath\Xi}=(\xi_{1},\xi_{2})$, ${\boldmath\Theta}=(\theta_{1},\theta_{2})$, 
${\boldmath \Phi}=(\varphi_{1},\varphi_{2})\in  X$ 
and  the abbreviations 
\begin{align*}
A(\Theta,{\boldmath \Phi})&:=a(\theta_1,\varphi_1)+a(\theta_2,\varphi_2),\\
F({\boldmath\Phi})&:=(f,\varphi_1)_{\lt},\\
\Gamma({\boldmath\Xi},\Theta,{\boldmath\Phi})&:=\gamma(\xi_{1},\theta_{2},\varphi_{1})+\gamma(\xi_{2},\theta_{1},\varphi_{1})-\gamma(\xi_{1},\theta_{1},\varphi_{2}).
\end{align*}
Note that   $A(\bullet,\bullet)$ is a scalar product in $X$  and the trilinear 
form $\Gamma(\bullet,\bullet,\bullet)$ is bounded \cite{GMNN_CFEM}.

It is known \cite{CiarletPlates,Brezzi} that there exist
a solution $\Psi \in X$ with $N(\Psi)=0$. Any solution has the regularity 
$\Psi\in {\bf H}^{2+s}(\Omega):=(H^{2+\alpha}(\Omega))^2$ for  $1/2<s\le 1$ depending on the polygonal
bounded Lipschitz domain $\Omega$ \cite{BlumRannacher}. This allows for the
boundedness 
\[
 \Gamma({\boldmath\Psi},{\boldmath\Theta},{\boldmath\Phi})\leq C \|{\boldmath\Psi}\|_{H^{2+s}(\Omega)}
\trinl\Theta\trinr  \|{\boldmath\Phi}\|_{H^{1}(\Omega)}
\quad\text{for any }{\boldmath\Theta}\in X\text{ and }{\boldmath\Phi} \in H^1_0(\Omega;\bR^2).
\]

\subsection{Conforming FEM}\label{Sec:CFEM_vke}
With the notation of Section~\ref{sec:CFEM_NS} on $V_C\subset H^2_0(\Omega)$, the conforming 
finite element formulation seeks ${\boldmath\Psi}_C=(u_C,v_C)\in X_{h}:=V_C\times V_C$ such that
\begin{align} \label{vformdC}
\displaystyle 
N({\boldmath\Psi}_C;{\boldmath\Phi}_C)=0 \quad\text{for all} \quad {\boldmath\Phi}_C \in  X_h.
\end{align}

\begin{thm}[a priori]\label{apriori_NS_est_NC}
If ${\boldmath\Psi}\in X$ is a regular solution to $N({\boldmath\Psi})=0$, then  
there exist positive  $\epsilon$, $\delta$, and $\rho$ such that  {\bf (A)}-{\bf (C)} hold with
${\rm apx}(\cT)\equiv 0$  for all $\cT \in \bT(\delta)$. 
\end{thm}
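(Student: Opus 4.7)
The plan is to apply the abstract machinery of Section~\ref{error} in the conforming Petrov--Galerkin setting with $X=Y:=V\times V$ (endowed with the product scalar product $A$ and the norm $\trinl{\boldmath\Phi}\trinr:=A({\boldmath\Phi},{\boldmath\Phi})^{1/2}$), $X_h=Y_h:=V_C\times V_C\subset X$, $\widehat X=\widehat Y:=X$, $\widehat a:=A$, $\widehat\Gamma:=\Gamma$, and $\widehat b:=\Gamma({\boldmath\Psi},\bullet,\bullet)+\Gamma(\bullet,{\boldmath\Psi},\bullet)$, the linearisation at the regular solution ${\boldmath\Psi}=(u,v)$. Since $X_h\subset X$, the canonical choices $Q=I|_{X_h}$ and $\cC=I|_{Y_h}$ are admissible; the cancellation $(1-\cC)y_h=0$ gives $\delta_1=\delta_3=\delta_5=0$ in (H1), (H3), (H5), and $(1-Q)x_h=0$ gives $\Lambda_4=0$ in (H4). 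Moreover $\widehat\alpha=1=\|\widehat a\|$ (because $A$ is the scalar product defining the norm) and $\widehat\beta=\beta$ by \eqref{AsmpCondn}, where $\beta>0$ denotes the inf-sup constant of $DN({\boldmath\Psi})$ guaranteed by regularity of ${\boldmath\Psi}$.

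The non-trivial estimates are (H2) and (H6). I take $P:\widehat X\to X_h$ to be the $A$-orthogonal Galerkin projection onto $X_h$; for unit $x_h\in X_h$ and $\xi:=A^{-1}(\widehat b(x_h,\bullet)|_X)\in X$, Galerkin orthogonality reduces the supremum in (H2) to $\trinl\xi-P\xi\trinr$. The regularity ${\boldmath\Psi}\in H^{2+s}(\Omega;\bR^2)$ for some $s\in(1/2,1]$ (from the biharmonic regularity of \cite{BlumRannacher}) combined with the 2D embeddings $H^{2+s}\hookrightarrow W^{2,4}$ and $H^1\hookrightarrow L^4$, applied via H\"older's inequality to
\[
\gamma(\psi,\theta,\varphi)=-\half\integ \cof(D^2\psi)\!:\!D^2\theta\;\varphi\dx,\quad \psi\in\{u,v\},
\]
yields the key bound $|\gamma(\psi,\theta,\varphi)|\lesssim \|\psi\|_{H^{2+s}}\trinl\theta\trinr\|\varphi\|_{H^1}$. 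Consequently $\widehat b(x_h,\bullet)|_X$ extends to an element of $H^{-1}(\Omega;\bR^2)$ of norm $\lesssim 1$, and biharmonic shift regularity then produces $\xi\in H^{2+s}(\Omega;\bR^2)$ with $\|\xi\|_{H^{2+s}}\lesssim 1$. Standard $C^1$ interpolation gives $\delta_2\lesssim h_{\max}^s$. Choosing $x_h:=P{\boldmath\Psi}$ in (H6) and using the same regularity of ${\boldmath\Psi}$ gives $\delta_6=\trinl{\boldmath\Psi}-P{\boldmath\Psi}\trinr\lesssim h_{\max}^s$.

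Reducing $h_{\max}\le\delta$ sufficiently enforces $\delta_1+\delta_2+\delta_3+2\|\widehat\Gamma\|\delta_6<\beta=\widehat\alpha\widehat\beta$ so that $\beta_0>0$ in \eqref{ccdefn_beta0}, and a further reduction yields $4\delta\|\widehat\Gamma\|<\beta_0$ in \eqref{defn_delta}. Theorem~\ref{thm3.1} then delivers (A)--(B) with mesh-independent $\epsilon,\delta,\rho$; one final shrinkage ensuring $4\epsilon\|\widehat\Gamma\|\le\beta_0$ permits Theorem~\ref{err_apriori_thm_vke} to deliver (C) with ${\rm apx}(\cT)=\|\widehat N({\boldmath\Psi})\|_{Y_h^*}$. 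The conformity $Y_h\subset Y$ combined with $N({\boldmath\Psi})=0$ in $Y^*$ forces ${\rm apx}(\cT)=0$, as asserted (compare Example~3.5). The main obstacle is the regularity step for $\xi$: the von K\'arm\'an trilinear form differentiates its first two arguments \emph{twice}, so the natural estimate on $\widehat b(x_h,\bullet)$ lives in $H^{-1}(\Omega;\bR^2)$ rather than $H^{-2}(\Omega;\bR^2)$; only the additional regularity $s>1/2$ of ${\boldmath\Psi}$ (used critically via $H^{2+s}\hookrightarrow W^{2,4}$ in the H\"older step) buys the full shift needed to obtain the rate $h_{\max}^s$ in both $\delta_2$ and $\delta_6$, and hence the smallness of the composite parameter in \eqref{ccdefn_beta0}.
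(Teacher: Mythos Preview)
Your proof is correct and follows essentially the same route as the paper: the paper simply states that the argument is analogous to Theorem~\ref{apriori_NS_est_C} and omits it, and what you have written is precisely that analogy carried out for the vector-valued von~K\'arm\'an setting (identity for $Q,\cC$ so $\delta_1=\delta_3=\Lambda_4=\delta_5=0$; $P$ the $A$-Galerkin projection; the $H^{-1}$ bound on $\widehat b(x_h,\bullet)$ via $H^{2+s}\hookrightarrow W^{2,4}$ and the symmetry $\gamma(\eta,\chi,\varphi)=\gamma(\chi,\eta,\varphi)$; biharmonic shift to get $\xi\in{\bf H}^{2+s}$ and $\delta_2\lesssim h_{\max}^s$; $x_h=P{\boldmath\Psi}$ for $\delta_6\lesssim h_{\max}^s$; conformity $Y_h\subset Y$ for ${\rm apx}(\cT)=0$). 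The only cosmetic slip is the reference ``Example~3.5'': the relevant example in the paper is the first one after Theorem~\ref{err_apriori_thm_vke}.
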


\begin{proof}
The proof  is analogous to that of Theorem~\ref{apriori_NS_est_C} 
and hence omitted. The a~priori error analysis is derived
in  \cite{GMNN_CFEM} with a fixed point iteration (of linear convergence).
\end{proof}

For any $K\in\cT$ and $E\in\cE$, define  the volume and edge error
estimators by 
\begin{align*} 
\eta_K^2&:=h_K^4\big{\|}\Delta^2 u_C-[u_C,v_C]-f\big{\|}^2_{L^2(K)}
+h_K^4 \big{\|}\Delta^2 v_C+1/2[u_C,u_C]\big{\|}^2_{L^2(K)},\\
\eta_E^2&:=h_E^{3}\left\|\left[\divc (D^2   u_C)\right]_E\cdot\nu_E\right\|^2_{L^2(E)}
+h_E^{3} \left\|\left[\divc (D^2   v_C)\right]_E\cdot\nu_E\right\|^2_{L^2(E)}\notag\\ 
&\qquad  +h_E\left\|\left[D^2 u_C\nu_E\right]_E\cdot\nu_E\right\|^2_{L^2(E)}
+h_E\left\|\left[D^2 v_C\nu_E\right]_E\cdot\nu_E\right\|^2_{L^2(E)} . 
\end{align*}

\begin{thm}[a posteriori]\label{reliability_C}
If ${\boldmath\Psi}\in X$  is a regular solution  to $N({\boldmath\Psi})=0$,  then there exist positive 
$\delta, \epsilon$, $C_{\rm rel}$, and $C_{\rm eff}$ 
such that, for all $\cT \in \bT(\delta)$, the unique discrete solution  
${\boldmath\Psi}_{C}=(u_C,v_C)\in  X_h$ to \eqref{vformdC} with $  \trinl{\boldmath\Psi}-{\boldmath\Psi}_C\trinr   <\epsilon  $  satisfies 
\begin{align}
C_{\rm rel}^{-2}\trinl{\boldmath\Psi}-{\boldmath\Psi}_C\trinr^2&\leq\sum_{K\in\cT}\eta_K^2
+\sum_{E\in\cE }\eta_E^2\leq C_{\rm eff}^2(\trinl{\boldmath\Psi}-{\boldmath\Psi}_{C}\trinr^2+{\rm osc}_0^2(f)).
\label{reliability_est_C}
\end{align}
\end{thm}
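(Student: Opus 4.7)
The plan is to mirror the strategy used for Theorem~\ref{reliability_C_NS} in the Navier--Stokes case, adapted to the coupled von K\'arm\'an system. I would apply Corollary~\ref{coraposteriori} with $X=Y=V\times V$, $X_h=Y_h=V_C\times V_C$, the identity $Q=\mathcal C=I$ (since the discretisation is conforming), and $v_h=\boldsymbol\Psi_C$. Theorem~\ref{apriori_NS_est_NC} gives a unique discrete solution with $\trinl\boldsymbol\Psi-\boldsymbol\Psi_C\trinr<\epsilon$ for sufficiently small $\delta$; after possibly shrinking $\epsilon$, the smallness hypothesis $\|\boldsymbol\Psi-\boldsymbol\Psi_C\|_X<\beta/\|\Gamma\|$ of Corollary~\ref{coraposteriori} holds, and reliability reduces (with $Q=I$, so $\|Q\boldsymbol\Psi_C-\boldsymbol\Psi_C\|_{\widehat X}=0$) to bounding $\|N(\boldsymbol\Psi_C)\|_{X^*}$.

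Pick $\boldsymbol\Phi=(\varphi_1,\varphi_2)\in X$ with $\trinl\boldsymbol\Phi\trinr=1$ realising (up to a small $\epsilon$) the dual norm, and invoke Galerkin orthogonality \eqref{vformdC} to write
\[
\|N(\boldsymbol\Psi_C)\|_{X^*}=N(\boldsymbol\Psi_C;\boldsymbol\Phi)=N(\boldsymbol\Psi_C;\boldsymbol\Phi-\Pi_h\boldsymbol\Phi)
\]
with the quasi-interpolation $\Pi_h$ of Lemma~\ref{interpolation_BFS} applied componentwise. Two successive piecewise integrations by parts of the biharmonic bilinear forms $a(u_C,\varphi_1-\Pi_h\varphi_1)$ and $a(v_C,\varphi_2-\Pi_h\varphi_2)$ produce volume contributions $\Delta^2 u_C$ and $\Delta^2 v_C$ together with the two standard biharmonic jump families $[D^2 u_C\nu_E]_E\!\cdot\!\nu_E$, $[\operatorname{div}(D^2 u_C)]_E\!\cdot\!\nu_E$ and their $v_C$ analogues. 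The trilinear contributions require no integration by parts: using the symmetry $\gamma(u_C,v_C,\cdot)+\gamma(v_C,u_C,\cdot)=-\int_\Omega[u_C,v_C]\cdot\,dx$, the nonlinear terms fold directly into the two volume residuals $\Delta^2 u_C-[u_C,v_C]-f$ and $\Delta^2 v_C+\tfrac12[u_C,u_C]$. Applying Lemma~\ref{interpolation_BFS} with $k=0,1,2$ (together with a scaled trace inequality on edges) and Cauchy--Schwarz to each of these contributions then yields the reliability estimate.

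For efficiency I follow Verf\"urth's bubble-function methodology as in the end of the proof of Theorem~\ref{reliability_C_NS}. For the volume contribution on $K$, set $g:=\Delta^2 u_C-[u_C,v_C]-f_K$ with $f_K:=\Pi_0 f$, pick the cubic bubble $b_K$, and test the first weak equation with $\rho_K:=b_K^2g\in H^2_0(K)\subset V$; since $\Delta^2 u-[u,v]-f=0$ in the distributional sense from \eqref{wforma}, the quantity $\int_K g\rho_K\,dx$ equals $a(u_C-u,\rho_K)+\gamma(u_C,v_C,\rho_K)+\gamma(v_C,u_C,\rho_K)-\gamma(u,v,\rho_K)-\gamma(v,u,\rho_K)+\int_K\rho_K(f-f_K)\,dx$. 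The bilinear decomposition $\gamma(u,v,\cdot)-\gamma(u_C,v_C,\cdot)=\gamma(u-u_C,v,\cdot)+\gamma(u_C,v-v_C,\cdot)$ combined with H\"older inequalities, the continuous embedding $H^{2+s}(\Omega)\hookrightarrow W^{1,\infty}(\Omega)$ for the regular solution, and the inverse estimate $\|\rho_K\|_{H^2(K)}+|\rho_K|_{W^{1,\infty}(K)}\lesssim h_K^{-2}\|g\|_{L^2(K)}$ then yields $h_K^2\|g\|_{L^2(K)}\lesssim\trinl\boldsymbol\Psi-\boldsymbol\Psi_C\trinr_K+\operatorname{osc}_0(f,K)$; the analogous argument applies to the $v$-equation (with no data oscillation). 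The edge residuals are handled by the edge-bubble construction $b_E=\Phi_E b_R^3$ as in the NS proof: extending each jump constantly in the normal direction and exploiting the global $L^\infty$- and $W^{1,\infty}$-boundedness of $\boldsymbol\Psi$ absorbs the trilinear terms against the already-proved efficient volume residuals.

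The main technical obstacle I anticipate is the efficiency of the edge residuals: the jumps $[D^2 u_C\nu_E]_E\!\cdot\!\nu_E$ and $[\operatorname{div}(D^2 u_C)]_E\!\cdot\!\nu_E$ must be paired with a test function that simultaneously vanishes on $\partial\omega_E$ and produces the correct normal-derivative trace on $E$; the $\Phi_E b_R^3$ construction achieves this but requires careful scaling to reconcile the $h_E^{1/2}$- and $h_E^{3/2}$-weights with the piecewise inverse estimates on the two sub-triangles of the rhombus $R\subset\omega_E$. The nonlinear cross-terms in $\gamma$ that arise after inserting $\varrho_E$ into the first weak equation are controlled by the previously established volume efficiency, exactly as in the treatment of $h_E^{3/2}\|[\Delta u_C\nabla u_C]_E\cdot\tau_E\|_{L^2(E)}$ in Theorem~\ref{reliability_C_NS}.
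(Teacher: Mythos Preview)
Your proposal is correct and follows essentially the same route as the paper: reduce reliability to $\|N(\boldsymbol\Psi_C)\|_{X^*}$ via Corollary~\ref{coraposteriori}, insert the quasi-interpolant $\Pi_h$ by Galerkin orthogonality, and integrate the biharmonic part by parts (the paper additionally remarks that the tangential jumps $[D^2 u_C\,\nu_E]_E\!\cdot\!\tau_E$ vanish by $C^1$-conformity, which you implicitly assume). For efficiency the paper merely cites \cite{CCGMNN18} and \cite{Georgoulis2011}, whereas your bubble-function sketch is more explicit; one small adjustment is needed: the von K\'arm\'an nonlinearity $[\,\cdot\,,\,\cdot\,]\varphi$ pairs second derivatives of the arguments against $\rho_K$ itself (not $\nabla\rho_K$ as in the Navier--Stokes curl term), so the relevant bounds are $\|\rho_K\|_{L^\infty(K)}\lesssim h_K^{-1}\|g\|_{L^2(K)}$ and the embedding $H^{2+s}(\Omega)\hookrightarrow W^{2,4}(\Omega)$ rather than the $W^{1,\infty}$ statements you borrowed from the NS proof.
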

 
\begin{proof}
For $Y=X$,  $Y_h=X_h$, we proceed as in the proof of Theorem \ref{reliability_C_NS}
and, for sufficiently small $\delta$, derive  {\bf (H1)}- {\bf (H6)} and  
$\trinl u-u_C\trinr < \beta/\|\Gamma\|$ from 
Theorem~\ref{apriori_NS_est_C}.  
Hence Corollary~\ref{coraposteriori} implies for $v_h\equiv {\boldmath\Psi}_C= (u_C, v_C)$ that
\begin{align*}
\trinl{\boldmath\Psi}-{\boldmath\Psi}_C\trinr\lesssim \|N({\boldmath\Psi}_C)\|_{X^*}=N({\boldmath\Psi}_C;{\boldmath\Phi})
\end{align*}
for some  ${\boldmath\Phi}\in X$ with $\trinl{\boldmath\Phi}\trinr=1$ and its approximation 
$\Pi_h{\boldmath\Phi}\in X_h$
($\Pi_h$ from Lemma~\ref{interpolation_BFS} applies componentwise). 
Abbreviate $(\chi_1,\chi_2):={ \boldmath \chi} :={\boldmath\Phi}-\Pi_h{\boldmath\Phi}$ and deduce from  
 \eqref{vformdC} that 
$\|N({\boldmath\Psi}_C)\|_{X^*}=N({\boldmath\Psi}_C;(\chi_1,\chi_2))$.
Successive integrations by parts show
\begin{align*}
&A({\boldmath\Psi}_C,{ \boldmath \chi} )=\sik (\Delta^2 u_C) \chi_1  \dx + \sik(\Delta^2 v_C) \chi_2  \dx \notag \\ 
&\quad +\sie\left[D^2 u_C\right]_E\nu_E\cdot\nabla\chi_1\ds
+ \sie\left[D^2 v_C\right]_E\nu_E\cdot\nabla\chi_2 \ds \notag \\
&\quad- \sie \chi_1 \left[\divc(D^2 u_C)\right]_E\cdot\nu_E\ds
- \sie  \chi_2 \left[\divc(D^2 v_C)\right]_E\cdot\nu_E \ds.
\end{align*}
This and the definition of $\Gamma(\bullet,\bullet,\bullet)$ lead to the residual
\begin{align}
&A({\boldmath\Psi}_C,{\boldmath\Phi}-\Pi_h{\boldmath\Phi})-F({\boldmath\Phi}-\Pi_h{\boldmath\Phi})+\Gamma({\boldmath\Psi}_C,{\boldmath\Psi}_C,{\boldmath\Phi}-\Pi_h{\boldmath\Phi})\notag\\
&\quad= \sik\left(\Delta^2 u_C-[u_C,v_C]-f\right)\chi_1\dx+\sik \big(\Delta^2 v_C+\half[u_C,u_C]\big)\chi_2\dx\notag\\
&\quad\quad-\sie\left(\left[\divc(D^2 u_C)\right]_E\cdot\nu_E\right)\chi_1\ds
+\sie\left[D^2 u_C\right]_E\nu_E\cdot\nabla\chi_1\ds\notag\\
&\quad\quad -\sie\left(\left[\divc(D^2 v_C)\right]_E\cdot\nu_E\right)\chi_2\ds
+ \sie\left[D^2 v_C\right]_E\nu_E\cdot\nabla\chi_2\ds. \label{int_estimator}
\end{align}
The two edge terms in the above expression that involve $\nabla \chi_j $ for $j=1,2$  can be rewritten as    
\begin{align} 
&\sie\left[D^2 u_C\right]_E\nu_E\cdot\nabla\chi_1\ds+\sie\left[D^2 v_C\right]_E\nu_E\cdot\nabla\chi_2\ds\notag\\
&\quad=\sie\left[D^2 u_C\nu_E\right]_E\cdot\nu_E \frac{\partial\chi_1}{\partial \nu}\ds+\sie\left[D^2 v_C\nu_E\right]_E\cdot\nu_E \frac{\partial\chi_2}{\partial \nu}\ds \nonumber \\
&\quad\quad +\sie\left[D^2 u_C\nu_E\right]_E\cdot\tau_E \frac{\partial\chi_1}{\partial \tau}\ds+\sie\left[D^2 v_C\nu_E\right]_E\cdot\tau_E \frac{\partial\chi_2}{\partial \tau}\ds. \nonumber
\end{align}
The last two terms involve tangential derivatives and so vanish for $u_C$ and $v_C$ belong to $\hto$.
Standard arguments analogous to \cite[(5.12)-(5.14)]{CCGMNN18} with a Cauchy  inequality, an inverse inequality, and Lemma \ref{interpolation_BFS}  conclude the proof of the reliability. 

The proof of the efficiency of the volume term $\eta_K$ is immediately adopted from  that of 
\cite[Lemma 5.3]{CCGMNN18}. The arguments in the proof of efficiency for the edge terms 
$h_E\left\|\left[D^2 u_C\nu_E\right]_E\cdot\nu_E\right\|_{L^2(E)}$  
and $h_E\left\|\left[D^2 v_C\nu_E\right]_E\cdot\nu_E\right\|_{L^2(E)}$ are the same 
as for the (linear) biharmonic  equation and can be adopted from 
\cite[Theorem 4.4 ]{Georgoulis2011} or \cite[Theorem 6.2]{CCGMNN18}. 
Further  details are omitted. 
\end{proof}

\subsection{Morley FEM}\label{Sec:NCFEM_vke}
The Morley FEM
seeks ${\boldmath\Psi}_{M}\in  X_M:={\cM(\cT) \times \cM(\cT)}\subset \widehat{X}:=X+X_M$ 
(endowed with the norm $\trinl\bullet \trinr_{\text{pw}}$) such that
\begin{equation}\label{vformdNC}
N_h({\boldmath\Psi}_{M};{\boldmath\Phi}_M):=A_{\text{pw}}({\boldmath\Psi}_M,{\boldmath\Phi}_M)
+\Gamma_{\text{pw}}({\boldmath\Psi}_M,{\boldmath\Psi}_M,{\boldmath\Phi}_M)-F({\boldmath\Phi}_M)=0 \fl {\boldmath\Phi}_M \in  X_M.
\end{equation}
Here and throughout this subsection,  for all $ {\boldmath\Xi}=(\xi_{1},\xi_{2})$, 
$\Theta=(\theta_{1},\theta_{2})$, ${\boldmath\Phi}=(\varphi_{1},\varphi_{2})\in   \widehat{X}$,
\begin{align*}
& A_{\text{pw}}(\Theta,{\boldmath\Phi}):=a_{\text{pw}}(\theta_1,\varphi_1)+a_{\text{pw}}(\theta_2,\varphi_2), 
 \; F({\boldmath\Phi}):=\sit f\varphi_1\dx, \\
 &\Gamma_{\text{pw}}({\boldmath\Xi},\Theta,{\boldmath\Phi}):=b_{\text{pw}}(\xi_{1},\theta_{2},\varphi_{1})+b_{\text{pw}}(\xi_{2},\theta_{1},\varphi_{1})-b_{\text{pw}}(\xi_{1},\theta_{1},\varphi_{2}),
\end{align*} 
and, for all $ \eta,\chi,  \varphi  \in \widehat{V}:=H^2_0(\Omega)+ \cM(\cT)$,
\begin{equation*}
 a_{\text{pw}}(\eta,\chi):=\sit D^2 \eta:D^2\chi \dx\text{ and } 
 b_{\text{pw}}(\eta,\chi,\varphi):=-\half\sit [\eta,\chi]\varphi \dx.
\end{equation*} 
(The boundedness of  $a_{\text{pw}}$ is immediate and  that of $\Gamma_{\text{pw}} $ 
follows from Lemma~\ref{lemmadiscreteembeddings}.) 

\begin{thm}[a priori]\label{apriori_VKE_est_NC}
If ${\boldmath\Psi}\in X$ is a regular solution to $N({\boldmath\Psi})=0$, then  
there exist positive  $\epsilon$, $\delta$, and $\rho$ such that  {\bf (A)}-{\bf (C)} hold
for any $\cT \in \bT(\delta)$ with 
\[
{\rm apx(\cT)}\lesssim  \trinl   {\boldmath\Psi}-I_{M} {\boldmath\Psi} \trinr_{\rm pw} + \text{\rm osc}_0(f+ [u,v],\cT)
+ \text{\rm osc}_0([u,u],\cT)\lesssim  h_{\rm max}^{s}.
\]
\end{thm}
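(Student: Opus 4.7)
The proof follows exactly the template of Theorem~\ref{apriori_NS_est_NCFEM}: identify the abstract spaces and parameters, verify {\bf (H1)}--{\bf (H6)} with rates $\lesssim h_{\max}^s$, invoke Theorems~\ref{thm3.1} and~\ref{err_apriori_thm_vke} to conclude {\bf (A)}--{\bf (C)}, and finally expand ${\rm apx}(\cT)=\|\widehat N(\boldsymbol\Psi)\|_{X_M^*}$. Set $X=Y=V\times V$, $X_h=Y_h=X_M=\cM(\cT)\times\cM(\cT)$, $\widehat X=X+X_M$ endowed with $\trinl\bullet\trinr_{\text{pw}}$, $\widehat a:=A_{\text{pw}}$, and $\widehat b(\boldsymbol\Xi,\boldsymbol\Phi):=\Gamma_{\text{pw}}(\boldsymbol\Psi,\boldsymbol\Xi,\boldsymbol\Phi)+\Gamma_{\text{pw}}(\boldsymbol\Xi,\boldsymbol\Psi,\boldsymbol\Phi)$. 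Take $P:=I_M$ and $Q:=\cC:=E_M$ acting componentwise. The continuous inf-sup $\beta>0$ follows from the regularity of $\boldsymbol\Psi$ exactly as in Subsection~5.2, and the bound on $\widehat b(\boldsymbol\Theta_M,\bullet)$ as an $H^{-1}$-functional uses the extra regularity $\boldsymbol\Psi\in\mathbf H^{2+s}$ together with Lemma~\ref{lemmadiscreteembeddings}.

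\textbf{Verification of (H1)--(H6).} For any unit $\boldsymbol\Theta_M\in X_M$, the lift $\boldsymbol Z\in V\times V$ solving $A(\boldsymbol Z,\bullet)=\widehat b(\boldsymbol\Theta_M,\bullet)$ inherits $\boldsymbol Z\in\mathbf H^{2+s}(\Omega)$ with $\|\boldsymbol Z\|_{H^{2+s}(\Omega)}\lesssim 1$ from the $H^{-1}$-boundedness of $\widehat b(\boldsymbol\Theta_M,\bullet)$ and the regularity of the biharmonic operator. Lemma~\ref{EnrichSmooth} then yields $\delta_1\lesssim h_{\max}^s$ for {\bf (H1)}, and Lemma~\ref{Morley_Interpolation} gives $\delta_2\lesssim h_{\max}^s$ for {\bf (H2)}. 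The bound $\delta_3\lesssim h_{\max}$ in {\bf (H3)} follows from the embedding estimate of Lemma~\ref{lemmadiscreteembeddings} together with Lemma~\ref{hctenrich}.a. Lemma~\ref{hctenrich}.b provides {\bf (H4)} with $\Lambda_4=\Lambda$. For {\bf (H5)}, apply Lemma~\ref{EnrichSmooth} to $\boldsymbol\Psi$ and Lemma~\ref{hctenrich}.a to $F$ (the $L^2$-orthogonality $\phi_M-E_M\phi_M\perp P_0$ giving the second factor $h_{\max}^2$ from Lemma~\ref{hctenrich}.b) to obtain $\delta_5\lesssim h_{\max}^s$. Finally, set $\boldsymbol x_h:=I_M\boldsymbol\Psi$ and use Lemma~\ref{Morley_Interpolation}.b plus $\boldsymbol\Psi\in\mathbf H^{2+s}$ to deduce $\delta_6\lesssim h_{\max}^s$. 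For $h_{\max}$ sufficiently small, $\beta_0>0$ and $4\delta\|\widehat\Gamma\|<\beta_0$ hold, so Theorems~\ref{thm3.1} and~\ref{err_apriori_thm_vke} deliver {\bf (A)}--{\bf (C)}.

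\textbf{Computation of ${\rm apx}(\cT)$.} Pick $\boldsymbol\Phi_M\in X_M$ with $\trinl\boldsymbol\Phi_M\trinr_{\text{pw}}=1$ and $\widehat N(\boldsymbol\Psi;\boldsymbol\Phi_M)={\rm apx}(\cT)$. Since $E_M\boldsymbol\Phi_M\in V\times V$ and $N(\boldsymbol\Psi,E_M\boldsymbol\Phi_M)=0$, the residual collapses to
\[
{\rm apx}(\cT)=A_{\text{pw}}(\boldsymbol\Psi,\boldsymbol{\widehat\psi})+\Gamma_{\text{pw}}(\boldsymbol\Psi,\boldsymbol\Psi,\boldsymbol{\widehat\psi})-F(\boldsymbol{\widehat\psi}),\qquad \boldsymbol{\widehat\psi}:=\boldsymbol\Phi_M-E_M\boldsymbol\Phi_M.
\]
Lemma~\ref{hctenrich}.c gives $I_M\boldsymbol{\widehat\psi}=0$, whence Lemma~\ref{Morley_Interpolation}.a implies $D^2_{\text{pw}}\boldsymbol{\widehat\psi}\perp P_0$ and $\boldsymbol{\widehat\psi}\perp P_0$. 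Therefore $A_{\text{pw}}(\boldsymbol\Psi,\boldsymbol{\widehat\psi})=A_{\text{pw}}(\boldsymbol\Psi-I_M\boldsymbol\Psi,\boldsymbol{\widehat\psi})$. Expanding $\Gamma_{\text{pw}}(\boldsymbol\Psi,\boldsymbol\Psi,\bullet)-F(\bullet)$ via $[u,v]=[v,u]$ shows that the remaining contribution equals $-\int(f+[u,v])\widehat\psi_1+\tfrac12\int[u,u]\widehat\psi_2$, which by $\boldsymbol{\widehat\psi}\perp P_0$ becomes an oscillation pairing controlled by $({\rm osc}_0(f+[u,v],\cT)+{\rm osc}_0([u,u],\cT))\,\|h_\cT^{-2}\boldsymbol{\widehat\psi}\|$. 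Lemma~\ref{hctenrich}.b bounds both $\|h_\cT^{-2}\boldsymbol{\widehat\psi}\|$ and $\trinl\boldsymbol{\widehat\psi}\trinr_{\text{pw}}$ by $\trinl\boldsymbol\Phi_M\trinr_{\text{pw}}=1$, yielding the first displayed inequality. The second inequality $\lesssim h_{\max}^s$ is immediate from Lemma~\ref{Morley_Interpolation}.b with $\boldsymbol\Psi\in\mathbf H^{2+s}$ and the trivial bound ${\rm osc}_0\lesssim h_{\max}^2\cdot\|\bullet\|$.

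\textbf{Anticipated obstacle.} The only subtlety is keeping the constants in {\bf (H1)}--{\bf (H5)} independent of the specific regular solution $\boldsymbol\Psi$ and ensuring that the regularity pickup $\boldsymbol Z\in\mathbf H^{2+s}$ for the adjoint lift survives the inhomogeneous $\widehat b$ built from $\boldsymbol\Psi$; this hinges on bounding $\widehat b(\boldsymbol\Theta_M,\bullet)$ as an $H^{-1}$-functional using Lemma~\ref{lemmadiscreteembeddings} (for the $W^{1,p}$-control of the Morley factor) and the $W^{2,4}\cap W^{1,\infty}$-embedding of $\mathbf H^{2+s}$, exactly as in the Navier--Stokes estimate \eqref{Gamma2bdd}. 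Once this is in place, everything else is a verbatim adaptation of Theorem~\ref{apriori_NS_est_NCFEM}.
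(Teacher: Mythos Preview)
Your proposal is correct and follows essentially the same route as the paper: identical choice of spaces, $P=I_M$, $Q=\cC=E_M$, the same verification of {\bf (H1)}--{\bf (H6)} via the biharmonic regularity lift and Lemmas~\ref{Morley_Interpolation}--\ref{EnrichSmooth}, and the same computation of ${\rm apx}(\cT)$ through $N(\boldsymbol\Psi;E_M\boldsymbol\Phi_M)=0$ and the orthogonalities from Lemma~\ref{hctenrich}.c.

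One minor point of difference: for {\bf (H3)} the paper exploits that in the von~K\'arm\'an trilinear form the third argument carries \emph{no} derivative, so that $\Gamma_{\text{pw}}(\boldsymbol\Psi,\widehat\Theta,\widehat{\boldsymbol\Phi})\lesssim\|\boldsymbol\Psi\|_{H^{2+s}}\trinl\widehat\Theta\trinr_{\text{pw}}\|\widehat{\boldsymbol\Phi}\|_{L^4}$; an inverse estimate $\|\varphi_M-E_M\varphi_M\|_{L^4(K)}\lesssim h_K^{-1/2}\|\varphi_M-E_M\varphi_M\|_{L^2(K)}$ combined with Lemma~\ref{hctenrich}.a then yields the sharper $\delta_3\lesssim h_{\max}^{3/2}$. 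Your appeal to Lemma~\ref{lemmadiscreteembeddings} plus Lemma~\ref{hctenrich}.a gives only $\delta_3\lesssim h_{\max}$, which is still sufficient but not optimal; note also that Lemma~\ref{lemmadiscreteembeddings} as stated is a boundedness result, so the $h_{\max}$-factor really comes from the approximation estimate hidden in its proof (or directly from Lemma~\ref{hctenrich}.a). This is a cosmetic difference and does not affect the conclusion.
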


\begin{proof} 
Set $Y=X$, $Y_h=X_M$, $\widehat{X}= X+X_M$, $\widehat{a}(\bullet,\bullet):=A_{\text{pw}}(\bullet,\bullet),$ $\widehat{b}(\bullet,\bullet):=2\Gamma_{\text{pw}}( {\boldmath\Psi},\bullet,\bullet)$ and $P=I_M$,  $Q={\mathcal C}=E_M$. 
Given  ${\boldmath\Psi}\in {\bf H}^{2+s}(\Omega)$,  
$\widehat{\Theta},\: \widehat{{\boldmath\Phi}}\in \widehat{X}$, piecewise \Holder inequalities and 
the bounded global  Sobolev imbedding  $  H^{2+s}(\Omega)\hookrightarrow W^{2,4}(\Omega)$ (for $s>1/2$)
show
\begin{align} \label{Gamma3bdd}
\Gamma_{\text{pw}}({\boldmath\Psi},\widehat{\theta},\widehat{{\boldmath\Phi}})
\lesssim \|{\boldmath\Psi}\|_{H^{2+s}(\Omega)}\ensuremath{| \!| \! |}\widehat{\Theta}\ensuremath{| \!| \! |}_{\text{pw}}\|\widehat{{\boldmath\Phi}}\|_{L^4(\Omega)}. 
\end{align}
For $\Theta_M \in X_M$ with $\trinl\Theta_M\trinr_{\text{pw}}=1$, 
the linear functional $\Gamma({\boldmath\Psi},\Theta_M,\bullet)\in {\bf H}^{-1}(\Omega)$
leads to a  unique solution ${\boldmath Z} \in  X$ to the biharmonic problem 
$A({\boldmath Z},{\boldmath\Phi})=\Gamma({\boldmath\Psi},\Theta_M,{\boldmath\Phi})$ for all ${\boldmath\Phi}\in X$ 
with ${\boldmath Z}\in {\bf H}^{2+s}(\Omega)$ \cite{BlumRannacher}. For  $\varphi_M\in \cM(\cT)$, the inverse estimate
\begin{equation*}
\|\varphi_M-E_M\varphi_M\|_{L^4(K)}\leq C h_K^{-1/2}\|\varphi_M-E_M\varphi_M\|_{L^2(K)} \; \text {for all} \;  K\in\cT,
\end{equation*}
the bound for $\Gamma_{\text{pw}}$, and 
Lemma~\ref{hctenrich}.a imply $\delta_3\lesssim h_{\rm max}^{3/2}$.  The 
remaining conditions for the parameters in the {\bf (H1)}-{\bf (H2)} and {\bf (H4)}-{\bf (H6)} 
are verified as in the proof of Theorem~\ref{apriori_NS_est_NCFEM}.
For some ${\boldmath\Phi}_M\in X_M$ with $\trinl{\boldmath\Phi}_M\trinr_{\text{pw}}=1$,  
${\rm apx}(\cT)=\|\widehat{N}({\boldmath\Psi})\|_{X_h^*}=\widehat{N}({\boldmath\Psi}; {\boldmath\Phi}_M) $.
This, $N({\boldmath\Psi}; E_M{\boldmath\Phi}_M)=0$,  \eqref{Gamma3bdd},  and 
Lemmas \ref{hctenrich}-\ref{EnrichSmooth} 
lead for $(\chi_1,\chi_2):={\boldmath\chi}:={\boldmath\Phi}_M-E_M{\boldmath\Phi}_M$ to
\begin{align*}
{\rm apx}(\cT) & = \widehat{N}({\boldmath\Psi}; {\boldmath\Phi}_M-E_M {\boldmath\Phi}_M) 
= A_{\text{pw}}({\boldmath\Psi}, {\boldmath\chi} )-F({\boldmath\chi})
+\Gamma_{\text{pw}}({\boldmath\Psi},{\boldmath\Psi},{\boldmath\chi}) \\
& = A_{\text{pw}}({\boldmath\Psi}-I_M {\boldmath\Psi},{\boldmath\chi})
- (f+ [u,v], \chi_1)_{L^2(\Omega)}+\frac 12  ([u,u], \chi_2)_{L^2(\Omega)}\\
&\lesssim   \trinl   {\boldmath\Psi}-I_M {\boldmath\Psi} \trinr_{\rm pw} + \text{osc}_0(f+ [u,v],\cT)
+ \text{osc}_0([u,u],\cT)\lesssim  h_{\rm max}^{s}
\end{align*}
with arguments as in  the final part of the proof  of Theorem~\ref{apriori_NS_est_NCFEM}.
Hence Theorems~\ref{thm3.1} and \ref{err_apriori_thm_vke} 
apply and prove {\bf (A)}-{\bf (C)}.
\end{proof}

For any $K\in\cT$ and $E\in\cE$, define the volume and edge 
error estimators  by 
\begin{align*}
\eta_K^2&:= h_K^4\left\|[u_M,v_M]+f\right\|_{L^2(K)}^2+
h_K^4 \left\|[u_M,u_M]\right\|_{L^2(K)}^2,\\
\eta_E^2 &:= h_E\left\|\left[D^2 u_M\right]_E\tau_E\right\|_{L^2(E)}^2
+h_E\left\|\left[D^2 v_M\right]_E\tau_E\right\|_{L^2(E)}^2.
\end{align*} 

\begin{thm}[a posteriori]\label{reliability_NC}
If ${\boldmath\Psi}=(u,v)\in  X$ is a regular solution to $N({\boldmath\Psi})=0$, then there exist 
$\delta,\epsilon$, $C_{\rm rel}$, and $C_{\rm eff}$ such that,
for any $\cT\in\bT(\delta)$, the discrete solution
${\boldmath\Psi}_{M}=(u_{M},v_{M})\in X_M:=\cM(\cT)\times \cM(\cT)$ to \eqref{vformdNC}
with $\trinl  {\boldmath\Psi}-{\boldmath\Psi}_{ M} \trinr_{\text{\rm pw}}\le \epsilon$  satisfies
\begin{align*}
	C_{\rm rel}^{-2}
	\trinl{\boldmath\Psi}-{\boldmath\Psi}_{ M}\trinr_{\text{\rm pw}}^2
	&\leq \sum_{K\in\cT}\eta_K^2+\sum_{E\in\cE}\eta_E^2\leq C_{\rm eff}^2 
	(\trinl {\boldmath \Psi}-{\boldmath\Psi}_{M}\trinr_{\text{\rm pw}}^2+{\rm osc}_0^2(f)).
\end{align*}
\end{thm}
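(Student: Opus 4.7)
The plan is to mirror the structure of the proof of Theorem~\ref{reliability_NS_NC}: establish reliability by applying the abstract a posteriori result of Corollary~\ref{coraposteriori}, then reduce the residual to computable element and edge contributions via a Morley-compatible integration by parts, and finally prove efficiency by standard bubble-function techniques. Set $X=Y=V\times V$, $X_h=Y_h=X_M$, $\widehat X=X+X_M$, $Q=\mathcal C=E_M$, $P=I_M$ acting componentwise, and choose $\delta,\epsilon>0$ small enough that Theorem~\ref{apriori_VKE_est_NC} supplies a unique discrete solution $\boldsymbol\Psi_M$ with $\trinl \boldsymbol\Psi-\boldsymbol\Psi_M\trinr_{\text{pw}}\le\epsilon\le\kappa\beta/(\|\Gamma\|(1+\Lambda))$ for some $\kappa<1$. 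Then Corollary~\ref{coraposteriori} yields
\[
\trinl\boldsymbol\Psi-\boldsymbol\Psi_M\trinr_{\text{pw}}
\lesssim \|\widehat N(\boldsymbol\Psi_M)\|_{X^*}+\trinl E_M\boldsymbol\Psi_M-\boldsymbol\Psi_M\trinr_{\text{pw}},
\]
so the task reduces to controlling these two terms by $\sum_{K}\eta_K^2+\sum_E \eta_E^2$.

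For the residual term, pick $\boldsymbol\Phi=(\varphi_1,\varphi_2)\in X$ with $\trinl\boldsymbol\Phi\trinr=1$; since $I_M\boldsymbol\Phi\in X_M$ and $N_h(\boldsymbol\Psi_M;I_M\boldsymbol\Phi)=0$, we have $\|\widehat N(\boldsymbol\Psi_M)\|_{X^*}=\widehat N(\boldsymbol\Psi_M;\boldsymbol\chi)$ with $\boldsymbol\chi=(\chi_1,\chi_2):=\boldsymbol\Phi-I_M\boldsymbol\Phi$. Two successive elementwise integrations by parts of $A_{\text{pw}}(\boldsymbol\Psi_M,\boldsymbol\chi)$ use that $\boldsymbol\Psi_M$ is piecewise quadratic, whence $\Delta^2\boldsymbol\Psi_M\equiv0$ and $\mathrm{div}(D^2\boldsymbol\Psi_M)\equiv0$ in every triangle; the surviving edge contribution decomposes along $\nu_E$ and $\tau_E$. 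The normal-normal part vanishes because $D^2\boldsymbol\Psi_M\nu_E\cdot\nu_E$ is edgewise constant and $\int_E\partial_\nu\chi_i\,\mathrm ds=0$ by the Morley property of $I_M$. Only the tangential contribution
\[
\sum_{E\in\mathcal E}\int_E [D^2\boldsymbol\Psi_M\nu_E]_E\cdot\tau_E\;\partial_\tau\chi_i\,\mathrm ds
\]
remains. Combining with the volume terms from $\Gamma_{\text{pw}}(\boldsymbol\Psi_M,\boldsymbol\Psi_M,\boldsymbol\chi)-F(\boldsymbol\chi)$ (which produce $[u_M,v_M]+f$ and $[u_M,u_M]$), Cauchy--Schwarz together with Lemma~\ref{Morley_Interpolation}.b and a trace inequality yields $\|\widehat N(\boldsymbol\Psi_M)\|_{X^*}^2\lesssim\sum_K\eta_K^2+\sum_E\eta_E^2$. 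The enrichment term is immediately controlled by Lemma~\ref{hctenrich}.b, giving $\trinl E_M\boldsymbol\Psi_M-\boldsymbol\Psi_M\trinr_{\text{pw}}^2\lesssim\sum_E h_E\|[D^2\boldsymbol\Psi_M]_E\tau_E\|_{L^2(E)}^2\le\sum_E\eta_E^2$.

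Efficiency follows standard Verf\"urth-type bubble-function arguments. For the volume part, test $R_K:=([u_M,v_M]+f,\tfrac12[u_M,u_M])$ against the componentwise cubic bubble $(b_K^2(\Pi_m R_K)_1,b_K^2(\Pi_m R_K)_2)\in (H^2_0(K))^2\subset X$ in \eqref{wform}; after invoking $N(\boldsymbol\Psi)=0$, the cross-terms $[u,v]-[u_M,v_M]$ and $[u,u]-[u_M,u_M]$ are split bilinearly and controlled by H\"older inequalities together with the discrete Sobolev embeddings of Lemma~\ref{lemmadiscreteembeddings} and inverse estimates on the bubble, yielding $h_K^2\|R_K\|_{L^2(K)}\lesssim\|\boldsymbol\Psi-\boldsymbol\Psi_M\|_{H^2(K)}+\mathrm{osc}_0(f,K)$. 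For the edge estimator $h_E^{1/2}\|[D^2 u_M]_E\tau_E\|_{L^2(E)}$ (and the analogous $v_M$ version), the argument is identical to the linear biharmonic case of \cite{Georgoulis2011} or \cite[Thm~6.2]{CCGMNN18} using an edge-bubble supported in the patch $\omega_E$; the nonlinear volume contributions arising in this step are absorbed into the already-efficient $\eta_K$.

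The main obstacle is the integration by parts step: one must verify carefully that the normal-normal jump across each edge vanishes against the Morley complement $\chi_i=\varphi_i-I_M\varphi_i$. This rests on the crucial combination of two facts unique to the Morley/biharmonic setting---that $D^2\boldsymbol\Psi_M$ is elementwise constant so its normal-normal component is constant on $E$, and that $\int_E\partial_\nu\chi_i\,\mathrm ds=0$ by construction of $I_M$---which together eliminate the otherwise uncontrollable average term $\{D^2\boldsymbol\Psi_M\nu\cdot\nu\}_E$ that caused the loss of efficiency in the 2D Navier--Stokes proof. This is precisely why, unlike Theorem~\ref{reliability_NS_NC}, the present theorem admits a complete two-sided estimate.
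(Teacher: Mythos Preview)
Your proof is correct and follows the same overall architecture as the paper (abstract reliability via Corollary~\ref{coraposteriori}/Theorem~\ref{abs_res_thm}, then residual plus enrichment, then Verf\"urth efficiency with references to \cite{CCGMNN18,Georgoulis2011}), but you take an unnecessary detour the paper avoids. You integrate $A_{\text{pw}}(\boldsymbol\Psi_M,\boldsymbol\chi)$ by parts to produce edge contributions; however, Lemma~\ref{Morley_Interpolation}.a gives $D^2_{\text{pw}}I_M=\Pi_0 D^2$, so $D^2_{\text{pw}}\boldsymbol\chi$ has zero mean on each triangle, and since $D^2\boldsymbol\Psi_M$ is piecewise constant this yields $A_{\text{pw}}(\boldsymbol\Psi_M,\boldsymbol\chi)=0$ outright. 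The paper uses exactly this: $\widehat N(\boldsymbol\Psi_M;\boldsymbol\chi)=\Gamma_{\text{pw}}(\boldsymbol\Psi_M,\boldsymbol\Psi_M,\boldsymbol\chi)-F(\boldsymbol\chi)$ contributes only the volume part $(\sum_K\eta_K^2)^{1/2}$, while the edge estimator $\eta_E$ arises \emph{solely} from the enrichment term $\trinl\boldsymbol\Psi_M-E_M\boldsymbol\Psi_M\trinr_{\text{pw}}$ through Lemma~\ref{hctenrich}.b plus an inverse estimate (exactly as in \eqref{eqccverlateinMayno33}). Your surviving ``tangential contribution'' therefore sums to zero (if the IBP is carried out consistently, including the average--jump term $\{D^2\boldsymbol\Psi_M\nu_E\}_E\cdot[\nabla\chi_i]_E$ you do not mention); bounding it by $\eta_E$ is at best redundant. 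Relatedly, your final paragraph mislocates the reason von~K\'arm\'an admits a two-sided estimate while Theorem~\ref{reliability_NS_NC} does not: the obstruction there comes from integrating the \emph{trilinear} form by parts (producing average terms $\{\Delta u_M\nabla u_M\}_E$), whereas here $\Gamma_{\text{pw}}$ is left as a volume integral and no such averages arise.
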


\begin{proof}
Let ${\boldmath\Psi}_M$ be the solution to \eqref{vformdNC} close to ${\boldmath\Psi}$ and  
apply Theorem~\ref{abs_res_thm} with  $Y=X, Y_h=X_M$,  $v_h={\boldmath\Psi}_M$, and $Q=E_M$.
Suppose that  $\epsilon,\delta$ satisfy Theorem~\ref{apriori_VKE_est_NC} and,
if necessary, are chosen smaller such that, for any $\cT\in\bT(\delta)$, exactly one discrete solution
${\boldmath\Psi}_{M}\in X_M$  to \eqref{vformdNC} satisfies 
 $\trinl  {\boldmath\Psi}-{\boldmath\Psi}_{M}  \trinr_{\text{\rm pw}} \le \epsilon
 \le \beta/( 2(1+\Lambda )\| \Gamma
 \|)$. Lemma \ref{hctenrich}.b implies
 $
\trinl {\boldmath\Psi}_{ M}- E_{M} {\boldmath\Psi}_{ M}  \trinr_{\text{\rm pw}}  
\le \Lambda \trinl  {\boldmath\Psi}-{\boldmath\Psi}_{M}  \trinr_{\text{\rm pw}}
\le  \Lambda \epsilon 
$.
This and  triangle inequalities show
\begin{align*}
\trinl E_M{\boldmath\Psi}_M\trinr + \trinl {\boldmath\Psi}_M\trinr_{\text{pw}} &\le 
 \trinl {\boldmath\Psi}_M-E_M{\boldmath\Psi}_M \trinr_{\text{pw}} + 2  \trinl {\boldmath\Psi}_M\trinr_{\text{pw}}
\le  2 \trinl {\boldmath\Psi}\trinr +(2+ \Lambda)\epsilon=:M;   \\
\trinl {\boldmath\Psi}- E_M{\boldmath\Psi}_M\trinr &\le  \trinl  {\boldmath\Psi}- {\boldmath\Psi}_M\trinr_{\text{pw}}+ \trinl {\boldmath\Psi}_M- E_M{\boldmath\Psi}_M\trinr_{\text{pw}} 
\le (1+\Lambda)\epsilon \le  \beta/( 2\|\Gamma
 \|).
\end{align*}
Consequently,  the abstract residual \eqref{relib_eqn} in Theorem~\ref{abs_res_thm} implies
\begin{align}\label{eqccverlateinMayno123}
\trinl{\boldmath\Psi}-{\boldmath\Psi}_M\trinr_{\text{pw}}\leq  2\beta^{-1} 
\|N(E_M{\boldmath\Psi}_M)\|_{ X^*}+\trinl {\boldmath\Psi}_M-E_M{\boldmath\Psi}_M\trinr_{\text{pw}}.
\end{align}
There exists ${\boldmath\Phi}\in  X$ with $\trinl {\boldmath\Phi}\trinr=1$ and 
\[
 \|N(E_M{\boldmath\Psi}_M)\|_{ X^*} =  N(E_M{\boldmath\Psi}_M;{\boldmath\Phi})=A_{\text{\rm pw}}(E_M{\boldmath\Psi}_M,{\boldmath\Phi})-F({\boldmath\Phi})+\Gamma(E_M{\boldmath\Psi}_M,E_M{\boldmath\Psi}_M,{\boldmath\Phi})
\]
with the definition of $N$. This and the definition of $\widehat{N}({\boldmath\Psi}_M;{\boldmath\Phi})$ lead to
\begin{align*}
 \|N(E_M{\boldmath\Psi}_M)\|_{ X^*}  &=\widehat{N}({\boldmath\Psi}_M;{\boldmath\Phi})+A_{\text{pw}}(E_M{\boldmath\Psi}_M-{\boldmath\Psi}_M,{\boldmath\Phi})
\nonumber  \\
 & \nonumber \; +\Gamma(E_M{\boldmath\Psi}_M,E_M{\boldmath\Psi}_M,{\boldmath\Phi})-\Gamma_{\text{pw}}({\boldmath\Psi}_M,{\boldmath\Psi}_M,{\boldmath\Phi})\\
 &\le \widehat{N}({\boldmath\Psi}_M;{\boldmath\Phi}) +
(1+M\|\Gamma_{\rm pw}\| )\trinl {\boldmath\Psi}_M-E_M{\boldmath\Psi}_M\trinr_{\text{pw}}
\end{align*}
with the  bound  of  $A_{\text{pw}}$, elementary arguments  with the trilinear form and 
its bound  $\|\Gamma_{\rm pw}\|$ 
(deduced from Lemma~\ref{lemmadiscreteembeddings}
as in Remark~\ref{remarkccnew12324boundedness}), 
and $M$  in the second step.
Since ${\boldmath\Psi}_M$ solves  \eqref{vformdNC}, 
\(
\widehat{N}({\boldmath\Psi}_M;{\boldmath\Phi})=\widehat{N}({\boldmath\Psi}_M; {\boldmath\chi})
\)
holds for ${\boldmath\chi}:=(\chi_1,\chi_2):={\boldmath\Phi}-I_M{\boldmath\Phi}$  with the  Morley interpolation $ I_M {\boldmath\Phi}$ of ${\boldmath\Phi}$.  
Since Lemma~\ref{Morley_Interpolation}.a  implies $ A_{\text{pw}}({\boldmath\Psi}_{M},{\boldmath\chi})=0$, the  
definitions of $\Gamma_{\rm pw}(\bullet,\bullet,\bullet)$  and $F(\bullet)$ lead to
\begin{align*}
\widehat{N}({\boldmath\Psi}_M; {\boldmath\Phi})&=
\Gamma_{\text{pw}}({\boldmath\Psi}_M,{\boldmath\Psi}_M,{\boldmath\chi})  - F({\boldmath\chi}) \\
&= 1/2( [u_M,u_M], \chi_2 )_{L^2(\Omega)}   - (f+[u_M,v_M], \chi_1)_{L^2(\Omega)} \\
&\le ( \sum_{K\in\cT} \eta_K^2 )^{1/2} \| h_\cT^{-2}({\boldmath\Phi}-I_M{\boldmath\Phi})\|_{L^2(\Omega)} 
\le C_A ( \sum_{K\in\cT} \eta_K^2 )^{1/2}
\end{align*}
with weighted Cauchy inequalities in the second last step and the constant $C_A\approx 1$ from Lemma~\ref{Morley_Interpolation}.b  with $\trinl {\boldmath\Phi}\trinr=1$  in the end. 
The combination with \eqref{eqccverlateinMayno123} reads
\begin{align*}
\trinl{\boldmath\Psi}-{\boldmath\Psi}_M\trinr_{\text{pw}}  \leq  2 \beta^{-1}  C_A( \sum_{K\in\cT} \eta_K^2 )^{1/2}
+(1+  2\beta^{-1} (1+M\|\Gamma _{\text{pw}}  \| )    )  
\trinl {\boldmath\Psi}_M-E_M{\boldmath\Psi}_M\trinr_{\text{pw}}.
\end{align*}
The last term  is  controlled as in \eqref{eqccverlateinMayno33} and  this concludes the proof of the 
reliability estimate with 
$C_{\rm rel}= \max\{ 2\beta^{-1} C_A ,(1+ 2\beta^{-1}(1+M \|\Gamma _{\text{pw}}  \|))C_B\}  $.

The proof of the efficiency of the volume term $\eta_K$ 
 is immediately adopted from  that of  \cite[Lemma 5.3]{CCGMNN18}.
The arguments in the proof of efficiency for the edge term $\eta_E$ are the same 
as for the (linear) biharmonic  equation and can be adopted from  \cite[p. 322]{CarstensenGallistlHu2013}.
Further details are omitted. 
\end{proof}

\begin{rem}
The adaptation of the nonconforming scheme that allows for a right-hand side $f \in H^{-1}(\Omega)$ is possible by arguments in Section \ref{gen_rhs}.
\end{rem}

\section*{Acknowledgements}

The authors thank for the comments by the anonymous referee that led to Subsection \ref{gen_rhs}.
The research of the first author has been supported by the Deutsche Forschungsgemeinschaft in the Priority Program 1748 under the project "foundation and application of generalized mixed FEM towards nonlinear problems
in solid mechanics" (CA 151/22-2).  The research of the second author is supported by the NBHM Grant 0204/58/2018/R\&D-II/14721. The finalization of this paper has been supported by   DST SERB MATRICS grant of the second author MTR/2017/000199 and SPARC project 
(id 235) entitled {\it the mathematics and computation of plates}.

\section*{Appendix}

\subsection*{Proof of Lemma~\ref{SpecLem}}
The first part of the assertion is included in \cite{SchatzWang96} 
and so is merely outlined for convenient reading of the second. 
The Rellich compact embedding theorem 
$H^1_0(\Omega)\stackrel{c}{\hookrightarrow} L^2(\Omega)$ 
leads to $\lt\stackrel{c}{\hookrightarrow} H^{-1}(\Omega)$ in the sequel. 
Hence $S:=\left\{ g\in\lt\, |\,\|g\|=1\right\}$ is pre-compact in $V^*$. The operator 
$A\in L(V;V^*)$, associated to the scalar product $a$ via $Av=a(v,\bullet)$ for all 
$v\in V$ (note $A$ in contrast to the coefficients ${\bf A}$); $A$ is 
invertible and $A^{-1}\in L(V^*;V)$ maps $S$ onto $W:=A^{-1}(S)$ pre-compact in $H^1_0(\Omega)$. The open balls $ B(z,\epsilon/6)$ in $V$ 
around  $z\in \overline{W}$  with radius $\epsilon/6$ with respect to the norm $\|\bullet\|_a$
form an open cover of the compact set $ \overline{W}$ and so have a finite sub-cover for $z_1,\dots, z_J\in  \overline{W}$, 
\begin{equation}\label{FE_Cover}
W\subset \cup_{j=1}^J  B(z_j,\epsilon/6)\subset V.
\end{equation}
Since $\cD(\Omega)$ is dense in $V$, there exists $\zeta_j\in \cD(\Omega)$ with 
$\| z_j-\zeta_j\|_a <\epsilon/6$. The smoothness of $\zeta_j$ proves 
\begin{equation}\label{F1bdd}
\| \zeta_j-I_C\zeta_j\|_a  \leq C h_{\max} \le C \delta
\end{equation}
for any triangulation $\cT\in\bT(\delta)$ and the nodal interpolation $I_C$ in $S_0^1(\cT)$; the constant $C$ depends on $\displaystyle\max_{j=1,\,\ldots,\, J}\|D^2\zeta_j\|$, 
the shape-regularity parameter $\kappa$, and on $\overline{\lambda}$. 
For any $g\in\lt\setminus \{0\}$ with 
$z=A^{-1}(g)/\|g\|\in W$ from \eqref{FE_Cover}, there exists at least one index $j\in\{1,\ldots,J\}$ with 
$z\in B(z_j,\epsilon/6)$. This, the choice of $\zeta_j$, and   \eqref{F1bdd} 
with $ \delta:=\epsilon/(6C)  $   prove
\[
\| z-I_C\zeta_j\|_a \leq  \| z-z_j\|_a+
\| z_j-\zeta_j\|_a+\| \zeta_j-I_C\zeta_j\|_a<{\epsilon}/{3}+C\delta <\epsilon/2.
\]
A rescaling of this leads to~%
$\displaystyle
\| A^{-1}(g)-\|g\| I_C\zeta_j\|_a =\|g\| \| z-I_C\zeta_j\|_a\leq\epsilon \|g\|/2.$
This proves that the first term in the asserted inequality  is bounded by the right-hand side. 
The analysis of the second term considers the pre-compact subset 
${\bf A}\nabla W=\{ {\bf A}\nabla z: Tz=g\in S\}$ of $L^2(\Omega;\bR^n)$, {where $T: L^2(\Omega) \longrightarrow H^1_0(\Omega)$ is the solution map with $z=Tg \in H^1_0(\Omega)$.}  Since 
the open balls $B(Q,\epsilon/6)$  around $Q\in \overline{{\bf A}\nabla W}$ 
in the $L^2$ norm form an open cover
of the compact closure $\overline{{\bf A}\nabla W}$ in $L^2(\Omega;\bR^n)$, 
there exists $Q_1,\dots, Q_K$ in  $\overline{{\bf A}\nabla W}$ with
\begin{equation}\label{FE_Cover2}
{\bf A}\nabla W \subset \cup_{k=1}^K  B(Q_k,\epsilon/6)\subset L^2(\Omega;\bR^n).
\end{equation}
Since $\cD(\Omega;\bR^n)$ is dense in $L^2(\Omega;\bR^n)$, 
there exists ${\Phi}_k\in \cD(\Omega;\bR^n)$ with 
$\| Q_k-{\Phi}_k\| <\epsilon/6$. The smoothness of ${\Phi}_k$ and a Poincar\'e inequality 
(on simplices with constant $h_T/\pi$) prove
\begin{equation}\label{F1bdd2}
\| {\boldmath\Phi}_k-\Pi_0 \Phi_k\| \leq ||\nabla \Phi_k||\,  h_{\max}/\pi  \le C \delta
\end{equation}
for any triangulation $\cT\in\bT(\delta)$ with the $L^2$ projection $\Pi_0$ onto 
$P_0(\cT;\bR^n)$. The constant $C=\max\{ ||\nabla \Phi_1||,\dots, ||\nabla \Phi_K||\}$ depends on
the smoothness of the  functions $\Phi_1$ ,\dots, $\Phi_K$. 

For any $g\in\lt\setminus \{0\}$ with 
$z=A^{-1}(g)/\|g\|\in W$ from \eqref{FE_Cover}, there exists at least one index $k\in\{1,\ldots,K\}$ with 
${\bf A}\nabla z\in B(Q_k,\epsilon/6)$. This, the choice of $\Phi_k$, and   \eqref{F1bdd2} 
with $ \delta:=\epsilon/(6C)  $   prove
\[
\|  (1-\Pi_0) {\bf A}\nabla z  \|\le \|   {\bf A}\nabla z - \Pi_0 \Phi_k \|
\le  \|   {\bf A}\nabla z -Q_k\|+\|Q_k -\Phi_k\|+\|(1-\Pi_0)\Phi_k\|<\epsilon/2.
\]
A rescaling of this proves
$\displaystyle\|  (1-\Pi_0) {\bf A}\nabla z  \|\le \epsilon\|g\|/2$ for all $Az=g\in L^2(\Omega)$
(with arbitrary norm $\|g\|\ge 0$).
This concludes the proof. \qed

\bibliographystyle{amsplain}
\bibliography{vKeBib}

\providecommand{\bysame}{\leavevmode\hbox to3em{\hrulefill}\thinspace}
\providecommand{\MR}{\relax\ifhmode\unskip\space\fi MR }
\providecommand{\MRhref}[2]{%
  \href{http://www.ams.org/mathscinet-getitem?mr=#1}{#2}
}
\providecommand{\href}[2]{#2}
\begin{thebibliography}{10}

\bibitem{BlumRannacher}
H.~Blum and R.~Rannacher, \emph{On the boundary value problem of the biharmonic
  operator on domains with angular corners}, Math. Methods Appl. Sci.
  \textbf{2} (1980), no.~4, 556--581.

\bibitem{MR3097958}
D.~Boffi, F.~Brezzi, and M.~Fortin, \emph{Mixed finite element methods and
  applications}, Springer Series in Computational Mathematics, vol.~44,
  Springer, Heidelberg, 2013.

\bibitem{BDGK17}
F.~{Bonaldi}, D.~A. {Di Pietro}, G.~{Geymonat}, and F.~{Krasucki}, \emph{A
  hybrid high-order method for kirchhoff-love plate bending problems}, arXiv
  e-prints (2017), arXiv:1706.06781.

\bibitem{BN10}
A.~Bonito and R.H. Nochetto, \emph{Quasi-optimal convergence rate of an
  adaptive discontinuous galerkin method}, SIAM Journal on Numerical Analysis
  \textbf{48} (2010), no.~2, 734--771.

\bibitem{Braess}
D.~Braess, \emph{Finite elements, theory, fast solvers, and applications in
  elasticity theory}, 3rd ed., Cambridge, 2007.

\bibitem{BNRS17}
S.~C. Brenner, M.~Neilan, A.~Reiser, and L.-Y Sung, \emph{A {$C^0$} interior
  penalty method for a von {K}\'{a}rm\'{a}n plate}, Numer. Math. \textbf{135}
  (2017), no.~3, 803--832. \MR{3606463}

\bibitem{Brenner}
S.~C. Brenner and L.~R. Scott, \emph{The mathematical theory of finite element
  methods}, 3rd ed., Springer, 2007.

\bibitem{BSZZ13}
S.~C. Brenner, L.-Y Sung, H.~Zhang, and Y.~Zhang, \emph{A {M}orley finite
  element method for the displacement obstacle problem of clamped {K}irchhoff
  plates}, J. Comput. Appl. Math. \textbf{254} (2013), 31--42. \MR{3061064}

\bibitem{BSZ12}
S.~C. Brenner, L.-Y Sung, and Y.~Zhang, \emph{Finite element methods for the
  displacement obstacle problem of clamped plates}, Math. Comp. \textbf{81}
  (2012), no.~279, 1247--1262. \MR{2904578}

\bibitem{brennermathcomp}
S.C. Brenner, \emph{Preconditioning complicated finite elements by simple
  finite elements}, SIAM J. Sci. Comput. \textbf{17} (1996), no.~5, 1269--1274.
  \MR{1404873}

\bibitem{MR2373954}
S.C. Brenner and L.R. Scott, \emph{The mathematical theory of finite element
  methods}, third ed., Texts in Applied Mathematics, vol.~15, Springer, New
  York, 2008.

\bibitem{BSZ2013}
S.C. Brenner, Li-yeng Sung, H.~Zhang, and Yi~Zhang, \emph{A {M}orley finite
  element method for the displacement obstacle problem of clamped {K}irchhoff
  plates}, J. Comput. Appl. Math. \textbf{254} (2013), 31--42.

\bibitem{Brezzi}
F.~Brezzi, \emph{Finite element approximations of the von {K\'{a}rm\'{a}n}
  equations}, RAIRO Anal. Num\'{e}r. \textbf{12} (1978), no.~4, 303--312.

\bibitem{BrezziRappazRaviart80}
F.~Brezzi, J.~Rappaz, and P.-A. Raviart, \emph{Finite-dimensional approximation
  of nonlinear problems. {I}. {B}ranches of nonsingular solutions}, Numer.
  Math. \textbf{36} (1980), no.~1, 1--25.

\bibitem{CCADNNAKP15}
C.~Carstensen, A.~K. Dond, N.~Nataraj, and A.~K. Pani, \emph{Error analysis of
  nonconforming and mixed {FEM}s for second-order linear non-selfadjoint and
  indefinite elliptic problems}, Numer. Math. \textbf{133} (2016), no.~3,
  557--597.

\bibitem{CCDG14_eigenvalues}
C.~Carstensen and D.~Gallistl, \emph{Guaranteed lower eigenvalue bounds for the
  biharmonic equation}, Numer. Math. \textbf{126} (2014), no.~1, 33--51.

\bibitem{CarstensenGallistlHu2013}
C.~Carstensen, D.~Gallistl, and J.~Hu, \emph{A posteriori error estimates for
  nonconforming finite element methods for fourth-order problems on
  rectangles}, Numer. Math. \textbf{124} (2013), no.~2, 309--335.

\bibitem{CCDGJH14}
\bysame, \emph{A discrete {H}elmholtz decomposition with {M}orley finite
  element functions and the optimality of adaptive finite element schemes},
  Comput. Math. Appl. \textbf{68} (2014), no.~12, part B, 2167--2181.

\bibitem{CCDGNN15}
C.~Carstensen, D.~Gallistl, and N.~Nataraj, \emph{Comparison results of
  nonstandard {$P_2$} finite element methods for the biharmonic problem}, ESAIM
  Math. Model. Numer. Anal. \textbf{49} (2015), no.~4, 977--990.

\bibitem{CCDGMS15}
C.~Carstensen, D.~Gallistl, and M.~Schedensack, \emph{Adaptive nonconforming
  {C}rouzeix-{R}aviart {FEM} for eigenvalue problems}, Math. Comp. \textbf{84}
  (2015), no.~293, 1061--1087.

\bibitem{CCKKDPMS15}
C.~Carstensen, K.~K{\"o}hler, D.~Peterseim, and M.~Schedensack,
  \emph{Comparison results for the {S}tokes equations}, Appl. Numer. Math.
  \textbf{95} (2015), 118--129.

\bibitem{CCGMNN18}
C.~Carstensen, G.~Mallik, and N.~Nataraj, \emph{A priori and a posteriori error
  control of discontinuous {G}alerkin finite element methods for the von
  {K}\'{a}rm\'{a}n equations}, IMA J. Numer. Anal. \textbf{39} (2019), no.~1,
  167--200.

\bibitem{CCNN19}
C.~Carstensen and N.~Nataraj, \emph{Adaptive {M}orley {F}{E}{M} for the von
  {K}\'{a}rm\'{a}n equations with optimal convergence rates}, 2019.

\bibitem{CC_DP_MS12}
C.~Carstensen, D.~Peterseim, and M.~Schedensack, \emph{Comparison results of
  finite element methods for the {P}oisson model problem}, SIAM J. Numer. Anal.
  \textbf{50} (2012), no.~6, 2803--2823.

\bibitem{CN86}
M.~E. Cayco and R.~A. Nicolaides, \emph{Finite element technique for optimal
  pressure recovery from stream function formulation of viscous flows}, Math.
  Comp. \textbf{46} (1986), no.~174, 371--377.

\bibitem{CN89}
\bysame, \emph{Analysis of nonconforming stream function and pressure finite
  element spaces for the {N}avier-{S}tokes equations}, Comput. Math. Appl.
  \textbf{18} (1989), no.~8, 745--760.

\bibitem{Ciarlet}
P.~G. Ciarlet, \emph{The finite element method for elliptic problems},
  North-Holland, Amsterdam, 1978.

\bibitem{CiarletPlates}
\bysame, \emph{Mathematical elasticity: Theory of plates}, vol.~II,
  North-Holland, Amsterdam, 1997.

\bibitem{Clement75}
P.~Cl{\'e}ment, \emph{Approximation by finite element functions using local
  regularization}, Rev. Française Automat. Informat. Recherche
  Op\'{e}rationnelle S\'{e}r. Rouge Anal. Num\'{e}r. \textbf{9} (1975),
  no.~R-2, 77--84.

\bibitem{dd19}
D.~A. Di~Pietro and J.~Droniou, \emph{{The Hybrid High-Order Method for
  Polytopal Meshes}}, 528 pages, June 2019.

\bibitem{DDE15}
D.~A. Di~Pietro, J.~Droniou, and A.~Ern, \emph{A discontinuous-skeletal method
  for advection-diffusion-reaction on general meshes}, SIAM J. Numer. Anal.
  \textbf{53} (2015), no.~5, 2135--2157. \MR{3395131}

\bibitem{DiPetroErn12}
D.~A. Di~Pietro and A.~Ern, \emph{Mathematical aspects of discontinuous
  {G}alerkin methods}, Math\'ematiques \& Applications (Berlin), vol.~69,
  Springer, Heidelberg, 2012.

\bibitem{DEL16}
D.~A. Di~Pietro, A.~Ern, A.~Linke, and F.~Schieweck, \emph{A discontinuous
  skeletal method for the viscosity-dependent {S}tokes problem}, Comput.
  Methods Appl. Mech. Engrg. \textbf{306} (2016), 175--195. \MR{3502564}

\bibitem{Gallistl2014Adaptive}
D.~Gallistl, \emph{Adaptive finite element computation of eigenvalues}, Ph.D.
  thesis, Humboldt-Universit\"at zu Berlin, Mathematisch-Naturwissenschaftliche
  Fakult\"at, 2014.

\bibitem{DG_Morley_Eigen}
\bysame, \emph{Morley finite element method for the eigenvalues of the
  biharmonic operator}, IMA J. Numer. Anal. \textbf{35} (2015), no.~4,
  1779--1811.

\bibitem{Georgoulis2011}
E.~H. Georgoulis, P.~Houston, and J.~Virtanen, \emph{An {\it a posteriori}
  error indicator for discontinuous {G}alerkin approximations of fourth-order
  elliptic problems}, IMA J. Numer. Anal. \textbf{31} (2011), no.~1, 281--298.

\bibitem{Gudi10}
T.~Gudi, \emph{A new error analysis for discontinuous finite element methods
  for linear elliptic problems}, Math. Comp. \textbf{79} (2010), no.~272,
  2169--2189.

\bibitem{HuShi_Morley_Apost}
J.~Hu and Z.~Shi, \emph{A new a posteriori error estimate for the {Morley}
  element}, Numer. Math. \textbf{112} (2009), no.~1, 25--40.

\bibitem{KP07}
O.A. Karakashian and F.~Pascal, \emph{Convergence of adaptive discontinuous
  galerkin approximations of second-order elliptic problems}, SIAM Journal on
  Numerical Analysis \textbf{45} (2007), no.~2, 641--665.

\bibitem{MR1344684}
C.~T. Kelley, \emph{Iterative methods for linear and nonlinear equations},
  Frontiers in Applied Mathematics, vol.~16, Society for Industrial and Applied
  Mathematics (SIAM), Philadelphia, PA, 1995.

\bibitem{kreuzer2019convergence}
C~Kreuze and E.H. Georgoulis, \emph{Convergence of adaptive discontinuous
  galerkin methods (corrected version of [math. comp. 87 (2018), no. 314,
  2611--2640])}, 2019.

\bibitem{Lions}
J.~L. Lions, \emph{Quelques m\'{e}thodes de r\'{e}solution des probl\`{e}mes
  aux limites non lin\'{e}aires}, Dunod, Paris (1969), 53--57.

\bibitem{GMNN_CFEM}
G.~Mallik and N.~Nataraj, \emph{Conforming finite element methods for the von
  {K\'{a}rm\'{a}n} equations}, Adv. Comput. Math. (2016), 1--24.

\bibitem{GMNN_NCFEM}
\bysame, \emph{A nonconforming finite element approximation for the von
  {K\'{a}rm\'{a}n} equations}, ESAIM Math. Model. Numer. Anal. \textbf{50}
  (2016), no.~2, 433--454.

\bibitem{N10}
M.~Neilan, \emph{A nonconforming {M}orley finite element method for the fully
  nonlinear {M}onge-{A}mp\`ere equation}, Numer. Math. \textbf{115} (2010),
  no.~3, 371--394. \MR{2640051}

\bibitem{SchatzWang96}
A.~H. Schatz and J.~P. Wang, \emph{Some new error estimates for
  {R}itz-{G}alerkin methods with minimal regularity assumptions}, Math. Comp.
  \textbf{65} (1996), no.~213, 19--27.

\bibitem{Verfurth}
R.~Verf\"{u}rth, \emph{A aposteriori error estimation techniques for finite
  element methods}, Oxford University Press, 2013.

\bibitem{MR816732}
E.~Zeidler, \emph{Nonlinear functional analysis and its applications. {I}},
  Springer-Verlag, New York, 1986, Fixed-point theorems, Translated from the
  German by Peter R. Wadsack.

\end{thebibliography}

\end{document}